\newcommand{\re}{\mathbb{R}}
\newcommand{\cor}{\mathcal}
\newtheorem{theorem}{Theorem}
\newtheorem{proposition}{Proposition}
\newtheorem{lemma}{Lemma}
\newtheorem{remark}{Remark}[section]
\title{Empirical $L^2$-distance test statistics for ergodic diffusions}
\numberwithin{equation}{section}
\newcommand{\de}{\mathrm{d}}
\newcommand{\me}{\mathrm{m}}
\newcommand{\ee}{\mathrm{e}}
\newcommand{\oo}{\mathbf{o}}
\date{\today}
\author{A. De Gregorio}
\author{S.M. Iacus}
\address{Department of Statistical Sciences, ``Sapienza" University of Rome,
P.le Aldo Moro, 5 - 00185, Rome, Italy}
\address{Department of Economics, Management and Quantitative Methods, University of Milan,
 Via Conservatorio 7, 20122 - Milan, Italy }
\email{alessandro.degregorio@uniroma1.it}
\email{stefano.iacus@unimi.it}
\begin{document}

\maketitle

\begin{abstract} The aim of this paper is to introduce a new type of test statistic for simple null hypothesis on one-dimensional ergodic diffusion processes sampled at discrete times. We deal with a quasi-likelihood approach for stochastic differential equations (i.e. local gaussian approximation of the transition functions) and define a test statistic by means of the empirical $L^2$-distance between quasi-likelihoods. We prove that the introduced test statistic is asymptotically distribution free; namely it weakly converges to a $\chi^2$ random variable. Furthermore, we study the power under local alternatives of the parametric test. We show by the Monte Carlo analysis that, in the small sample case, the introduced test seems to perform better than other tests proposed in literature. 
\end{abstract}

{\it Keywords}: asymptotic distribution free test, local alternatives, maximum-likelihood type estimator, 
discrete observations, quasi-likelihood function, stochastic differential equation.

\section{Introduction}
 Let  $(\Omega,\cor F,{\bf F}=(\cor F_t)_{t\geq 0},P)$ be a filtered complete probability space.
Let us consider a $1$-dimensional processes $X=(X_t)_{t\geq 0}$ solution to  the following stochastic differential equation
\begin{equation}\label{sde}
\de X_t=b(\alpha,X_{t})\de t+\sigma(\beta,X_{t})\de W_t,\quad X_0=x_0,
\end{equation}
where $x_0$ is a deterministic initial value. We assume that $b: \Theta_{\alpha}\times\mathbb{R}\to \mathbb{R}$, $\sigma:\Theta_\beta\times\mathbb{R} \to \mathbb{R}$ are Borel known functions (up to $\alpha$ and $\beta$) and  $(W_t)_{t\geq 0}$ is a one-dimensional standard $\cor F_t$-Brownian motion. Furthermore, $\alpha\in\Theta_\alpha\subset \re^{m_1},\beta\in\Theta_\beta\subset \re^{m_2},m_1, m_2\in\mathbb N,$ are unknown parameters and $\theta=(\alpha,\beta)\in\Theta:=\Theta_\alpha\times\Theta_\beta,$ where $\Theta$ represents a compact subset of $\re^{m_1+m_2}$. We denote by $\theta_0:=(\alpha_0,\beta_0)$ the true value of $\theta$ and assume that $\theta_0\in$ Int$(\Theta).$ 

The sample path of $X$ is observed only at $n+1$ equidistant discrete times $t_i^n$, such that $t_i^n-t_{i-1}^n=\Delta_n<\infty$ for $i=1,...,n,$ (with $t_0^n=0$). Therefore the data, denoted by $(X_{t_i^n})_{0\leq i\leq n},$ are the discrete observations of the sample path of $X.$ Let $p$ be an integer with $p\geq 2.$ The asymptotic scheme adopted in this paper is the following: $T=n\Delta_n\to \infty$, $\Delta_n\to 0$ and $n\Delta_n^p\to 0$ as $n\to \infty$. This scheme is called rapidly increasing design, i.e. the number of observations grows over time but no so fast.

This setting is useful, for instance, in the analysis of financial time series. In mathematical finance and econometric theory, diffusion processes described by the stochastic differential equations \eqref{sde} play a central role. Indeed, they have been used to model the behavior of stock prices, exchange rates and interest rates. The underlying stochastic evolution of the financial assets can be thought continuous in time, although the data are always recorded at discrete instants (e.g. weekly, daily or each minute). For these reasons, the estimation problems for discretely observed stochastic differential equations have been tackled by many authors with different approaches (see, for instance,  \cite{fz}, \cite{Yos92}, \cite{gcj},  \cite{bibbsor}, \cite{kess}, \cite{kesssor}, \cite{ait}, \cite{gobet}, \cite{jacod}, \cite{ait2}, \cite{DGIcp}, \cite{phillyu}, \cite{Yos11}, \cite{uchyos12}, \cite{li}, \cite{uchyos14}, \cite{kama}). For clustering time series arising from discrete observations of diffusion processes  \cite{DGIclust} propose a new dissimilarity measure based on the $L^1$ distance between the Markov operators. The change-point problem in the diffusion term of a stochastic differential equation has been considered in \cite{DGIcp} and \cite{iacusyos}. In  \cite{iacusyosuc}, the authors faced the estimation problem for hidden diffusion processes observed at discrete times. An adaptive Lasso-type estimator is proposed in \cite{DGIlasso}. For the simulation and the practical implementation of the statistical inference for stochastic differential equations see \cite{iacus}, \cite{iacus2} and \cite{iacusyosyui}. 

We also recall that the statistical inference for continuously observed ergodic diffusions is a well-developed research topic; on this point the reader can consult \cite{kut}.

The main object of interest of the present paper is the problem of testing parametric hypotheses for diffusion processes from discrete observations. This research topic is less developed in literature. 
It is well-known that for testing two simple alternative hypotheses, the Neyman-Pearson lemma provides a procedure based on the likelihood ratio which leads to the uniformly most powerful test. In the other cases uniformly most powerful tests do not exist and for this reason the research of new criteria is justified. 

For discretely observed stochastic differential equations,  \cite{kituch} introduced and studied the asymptotic behavior of three kinds of test statistics: likelihood ratio type test statistic, Wald type test statistic and Rao's score type test statistic. 

Another possible approach is based on the divergences. Indeed, several statistical divergence measures (which are not necessarily a metric) and distances have been introduced in order to decide if two probability distributions are close or far. The main goal of this metric is to make ``easy to distinguish'' between a pair of distributions which are far from each other than between those which are closer. These tools have been used for testing hypotheses in parametric models. The reader can consult on this point, for example, \cite{mpv} and \cite{pardo}. For stochastic differential equations sampled at discrete times, \cite{DGItest} introduced a family of test statistics (for $p=2$ and $n\Delta_n^2\to 0$) based on empirical $\phi$-divergences.


We consider the following hypotheses testing problem concerning the vector parameter $\theta$  
$$H_0:\theta=\theta_0,\quad\text{vs}\quad  H_1:\theta\neq \theta_0,$$
and assume that $X$ is observed at discrete times; that is the data  $(X_{t_i^n})_{0\leq i\leq n}$ are available. In this work we study different test statistics with respect to those used in  \cite{DGItest} and \cite{kituch}. Indeed, the purpose of this paper is to propose a methodology based on a suitable ``distance'' between the approximated transition functions. This idea follows from the observation that in the case of continuous observations of \eqref{sde}, we could define the $L^2$-distance between the continuous loglikelihood. Clearly this approach is not useful in our framework and then, similarly to the aforementioned papers, we consider the local gaussian approximation of the transition density of the process $X$ from $X_{t_{i-1}}$ to $X_{t_i}.$ In other words, we resort the quasi-likelihood function introduced in \cite{kess}, defined by means of an approximation with higher order correction terms to relax the condition of convergence of $\Delta_n$ to zero.  Therefore, let $\texttt{l}_{p,i}(\theta),\theta\in\Theta,$ be the approximated log-transition function  from $X_{t_{i-1}}$ to $X_{t_i}$ representing the parametric model \eqref{sde}. We deal with \begin{equation*}
\mathbb{D}_{p,n}(\theta_1,\theta_2):=\frac1n\sum_{i=1}^n[\texttt{l}_{p,i}(\theta_1)-\texttt{l}_{p,i}(\theta_2)]^2,\quad \theta_1,\theta_2\in\Theta,
\end{equation*}
which can be interpreted as the empirical $L^2$-distance between two loglikelihoods. 
If $\hat \theta_{p,n}$ is the maximum quasi-likelihood estimator introduced in \cite{kess}, we are able to prove that, under $H_0,$ the test statistic $$T_{p,n}(\hat \theta_{p,n},\theta_0):=n\mathbb{D}_{p,n}(\hat \theta_{p,n},\theta_0)$$ is asymptotically distribution free; i.e. it converges in distribution to a chi squared random variable. Furthermore, we study the power function of the test under local alternatives.

The paper is organized as follows. Section \ref{not} contains the notations and the assumptions of the paper. The contrast function arising from the quasi-likelihood approach is briefly discussed in Section \ref{quasilik}. In the same section we define the maximum quasi-likelihood estimator and recall its main asymptotic properties. In Section \ref{test} we introduce and study a test statistic for the hypotheses problem $H_0:\theta=\theta_0$ vs $H_1: \theta\neq \theta_0$.
The proposed new test statistic shares the same asymptotic properties of the other test statistics presented in the literature. Therefore, to justify its use in practice among its competitors,  a numerical study is included in Section \ref{sec:numerics} which contains a comparison of several test statistics in the ``small sample'' case, i.e., when the asymptotic conditions are not met. Our numerical analysis shows that, at least for $p=2,$ the performance of $T_{2,n}$ is very good. The proofs are collected in Section \ref{proofs}.

It is worth to point out that for the sake of simplicity in this paper a 1-dimensional diffusion is treated. Nevertheless, it is possible to extend our methodology to the multidimensional stochastic differential equations setting. 

\section{Notations and assumptions}\label{not}

Throughout this paper, we will use the following notation.
\begin{itemize}
\item $\theta:=(\alpha,\beta)$ and $\alpha_0,\beta_0$ and $\theta_0$ denote the true values of $\alpha,\beta$ and $\theta$ respectively. 
\item $c(\beta,x)=\sigma^2(\beta,x).$
\item $C$ is a positive constant. If $C$ depends on a fixed quantity, for instance an integer $k,$ we may write $C_k.$  
\item $\partial_{\alpha_h}:=\frac{\partial}{\partial \alpha_h},\partial_{\beta_k}:=\frac{\partial}{\partial \beta_k},
\partial^2_{\alpha_h\alpha_k}:=\frac{\partial^2}{\partial\alpha_h\partial\alpha_k}, h,k=1,..., m_1, \partial^2_{\beta_h\beta_k}:=\frac{\partial^2}{\partial\beta_h\partial\beta_k}, h,k=1,...,m_2,\partial^2_{\alpha_h\beta_k}:=\frac{\partial^2}{\partial\alpha_h\partial\beta_k}, h=1,...,m_1, k=1,...,m_2,$ $\partial_\theta:=(\partial_\alpha,\partial_\beta)',$ where $\partial_\alpha:=(\partial_{\alpha_1},...,\partial_{\alpha_{m_1}})'$ and $\partial_\beta:=(\partial_{\beta_1},...,\partial_{\beta_{m_2}})',$ 
$\partial_\theta^2:=[\partial_{\alpha_j\beta_k}^2]_{h=1,...,m_1, k=1,...,m_2}.$
\item If $f:\Theta\times \mathbb R\to \mathbb R,$ we denote by $f_{i-1}(\theta)$ the value $f(\theta, X_{t_{i-1}^n})$; for instance $c(\beta, X_{t_{i-1}^n})=c_{i-1}(\beta)$.
\item For $0\leq i\leq n, t_i^n:=i\Delta_n$  and $\mathcal G_i^n:=\sigma(W_s,s\leq t_i^n).$
\item The random sample is given by ${\bf X}_n:=(X_{t_i^n})_{0\leq i\leq n}$ and $X_i:=X_{t_i^n}.$
\item The probability law of \eqref{sde} is denoted by $P_\theta$ and $E_\theta^{i-1}[\cdot]:=E_\theta[\cdot|\mathcal G_{i-1}^n].$ We set $P_0:=P_{\theta_0}$ and $E_0^{i-1}[\cdot]:= E_{\theta_0}^{i-1}[\cdot].$
\item $\overset{P_\theta}{\underset{n\to\infty}{\longrightarrow}} $ and $\overset{d}{\underset{n\to\infty}{\longrightarrow}} $ stand for the convergence in probability and in distribution, respectively.

\item Let $F_n:\Theta\times \mathbb R^{n}\to \mathbb R$ and $F:\Theta\to \mathbb R;$ $``F_n(\theta, {\bf X}_n)\overset{P_\theta}{\underset{n\to\infty}{\longrightarrow}} F(\theta)$ uniformly in $\theta"$ stands for
$$\sup_{\theta\in\Theta}\left|F_n(\theta, {\bf X}_n)-F(\theta)\right|\overset{P_\theta}{\underset{n\to\infty}{\longrightarrow}} 0.$$
Furthermore, if $F_n(\theta, {\bf X}_n)\overset{P_\theta}{\underset{n\to\infty}{\longrightarrow}} 0$ uniformly in $\theta$ we set $$F_n(\theta, {\bf X}_n)=\oo_{P_\theta}(1).$$

\item Let $u_n$ be a $\mathbb{R}$-valued sequence. We indicate by $R$ a function $\Theta\times\mathbb{R}^2\to \mathbb{R}$ for which there exists a constant $C$ such that
$$R(\theta,u_n,x)\leq u_nC(1+|x|)^C,\quad \text{for all}\,\theta\in\Theta, x\in \mathbb{R}^2, n\in\mathbb{N}.$$
Let us set $R_{i-1}(\Delta_n^k):=R(\theta,\Delta_n^k,X_{i-1}).$
\item For a $m\times n$ matrix $A$, $||A||^2=\text{tr}(AA')=\sum_{i=1}^m\sum_{j=1}^n |A_{ij}|^2.$
\end{itemize}


 Let $C_{\uparrow}^{k,h}(\mathbb R\times \Theta; \mathbb R)$ be the space of all functions $f$ such that:
\begin{itemize}
\item[(i)] $f(\theta,x)$ is a $\mathbb R$-valued function on $ \Theta\times\mathbb R;$
\item[(ii)] $f(\theta,x)$ is continuously differentiable with respect to $x$ up to order $k\geq 1$ for all $\theta;$ these $x$-derivatives up to order $k$ are of polynomial growth in $x,$ uniformly in $\theta$;
\item[(iii)] $f(\theta,x)$ and all $x$-derivatives up to order $k\geq 1,$ are $h\geq 1$ times continuously differentiable with respect to $\theta$ for all $x\in\mathbb R.$ Moreover, these derivatives up to the $h$-th order with respect to $\theta$ are of polynomial growth in $x,$ uniformly in $\theta$.
\end{itemize}

We need  some standard assumptions on the regularity of the process $X.$
\begin{itemize}
\item[$ A_1.$]  (Existence and Uniqueness) There exists a constant $C$ such that
$$\sup_{\alpha\in\Theta_\alpha}|b(\alpha,x)-b(\alpha,y)|+\sup_{\beta\in\Theta_\beta}|\sigma(\beta,x)-\sigma(\beta,y)|\leq C|x-y|.$$

\item[$ A_2.$] (Ergodicity) The process $X$ is ergodic for $\theta=\theta_0$ with
invariant probability measure $\pi_0(\de x)$. Thus
$$\frac1T\int_0^Tf(X_t)\de t\overset{P_\theta}{\underset{T\to\infty}{\longrightarrow}} \int f(x)\pi_0(\de x),$$
 where $f\in L^1(\pi_0)$. Furthermore, we assume that $\pi_0$ admits all moments finite.

\item[$ A_3.$] $\inf_{x,\beta}\sigma(\beta,x)>0.$

\item[$ A_4.$]  (Moments) For all $q\geq 0$ and for all $\theta\in\Theta$, $\sup_t
E|X_t|^q<\infty$.
\item[$A_5.$] $[k]$ (Smoothness) $b\in C_{\uparrow}^{k,3}(\Theta_\alpha\times \mathbb R,\mathbb R)$ and $\sigma\in C_{\uparrow}^{k,3}(\Theta_\beta\times \mathbb R,\mathbb R).$
\item[$ A_6.$] (Identifiability) If the coefficients $b(\alpha,x)=b(\alpha_0,x)$ and
$\sigma(\beta,x)=\sigma(\beta_0,x)$  for all
$x$  ($\pi_{0}$-almost surely), then $\alpha=\alpha_0$ and $\beta=\beta_0$.
\end{itemize}

Let $L_\theta$ the infinitesimal generator of $X$ with domain given by $C^2(\mathbb R)$ (the space of the twice continuously differentiable function on $\mathbb R$); that is if $f\in C^2(\mathbb R)$
$$L_\theta f(x):=b(\alpha,x)\frac{\partial f}{\partial x}(x)+\frac{c(\beta,x)}{2}\frac{\partial^2 f}{\partial x^2}(x),\quad L_0:=L_{\theta_0}.$$
Under the assumption $A_5$$[2(j-1)]$ we can define $L_\theta^j:=L_\theta \circ L_\theta^{j-1}$ with domain $C^{2j}(\mathbb R)$ and $L_\theta^0=$Id.

We conclude this section with some well-known examples of ergodic diffusion processes belonging to the class \eqref{sde}:

\begin{itemize}
\item the Ornstein-Uhlenbeck or Vasicek model is the unique solution to 

\begin{equation}\label{ou}
\de X_t=\alpha_1(\alpha_2-X_t)\de t+\beta_1 \de W_t,\quad X_0=x_0,
\end{equation}
where $b(\alpha_1,\alpha_2,x)=\alpha_1(\alpha_2-x)$ and $\sigma(\beta_1,x)=\beta_1$ with $\alpha_1,\alpha_2\in \mathbb R$ and $\beta_1>0.$ This stochastic process is a Gaussian process and it is often used in finance where $\beta_1$ is the volatility, $\alpha_2$ is the long-run equilibrium of the model and $\alpha_1$ is the speed of mean reversion. For $\alpha_1>0$ the Vasicek process is ergodic with invariant law $\pi_0$ given by a Gaussian law with mean $\alpha_2$ and variance $\frac{\beta_1^2}{2\alpha_1}.$ It is easy to check that all the conditions $A_1-A_6$ fulfill;

\item the Cox-Ingersoll-Ross (CIR) process is the solution to

\begin{equation}\label{cir}
\de X_t=\alpha_1(\alpha_2-X_t)\de t+\beta_1 \sqrt{X_t}\de W_t,\quad X_0=x_0>0,
\end{equation}
where  $b(\alpha_1,\alpha_2,x)=\alpha_1( \alpha_2-x)$ and $\sigma(\beta_1,x)=\beta_1\sqrt x$ with $\alpha_1,\alpha_2,\beta_1>0.$ If $2\alpha_1\alpha_2>\beta_1^2$ the process is strictly positive, otherwise non negative. This model has a conditional density given by the non central $\chi^2$ distribution. The CIR process is useful in the description of short-term interest rates and admits invariant law $\pi_0$ given by a Gamma distribution with shape parameter $\frac{2\alpha_1\alpha_2}{\beta_1^2}$ and scale parameter $\frac{\beta_1^2}{2\alpha_1}.$ If \eqref{cir} is strictly positive, we can prove that the above assumptions hold true.
\end{itemize}

\section{Preliminaries on the quasi-likelihood function}\label{quasilik}

We briefly recall the quasi-likelihood function introduced by \cite{kess} based on the It\^o-Taylor expansion. The main problem in the statistical analysis of the diffusion process $X$ is that its transition density is in general unknown and then the likelihood function is unknown as well. To overcome this difficulty one can discretizes the sample path of $X$ by means of Euler-Maruyama's scheme; namely
\begin{align}\label{eq:emscheme}
X_{i}-X_{i-1}=\int_{t_{i-1}^n}^{t_{i}^n}b(\alpha,X_s)\de s+\int_{t_{i-1}^n}^{t_{i}^n}\sigma(\beta,X_s)\de W_s\simeq b_{i-1}(\alpha)\Delta_n+\sigma_{i-1}(\beta)(W_{t_i^n}-W_{t_{i-1}^n}).
\end{align}
Hence \eqref{eq:emscheme} leads to consider a local-Gaussian approximation to the transition density; that is 
$$\mathcal L(X_i|X_{i-1})\simeq N(b_{i-1}(\alpha)\Delta_n,c_{i-1}(\beta)\Delta_n )$$
and the approximated loglikelihood function of the random sample ${\bf X}_n,$ called quasi-loglikelihood function, becomes
\begin{equation}\label{locga}
l_n(\theta):=\frac12\sum_{i=1}^n\left\{\frac{(X_i-X_{i-1}-b_{i-1}(\alpha)\Delta_n)^2}{c_{i-1}(\beta)\Delta_n}+\log c_{i-1}(\beta) \right\}.
\end{equation}
This approach suggests to consider the mean and the variance of the transition density of $X;$ that is
\begin{equation}
\me(\theta,X_{i-1}):=E_{\theta}[X_{i}|X_{i-1}],\quad \me_2(\theta,X_{i-1}):=E_{\theta}[(X_{i}-\me(\theta,X_{i-1}))^2|X_{i-1}],
\end{equation}
and assume
$$\mathcal L(X_i|X_{i-1})\simeq N(\me(\theta,X_{i-1}),\me_2(\theta,X_{i-1})).$$
Thus we can consider as contrast function the following one
\begin{equation}\label{eq:qcontrast}
\frac12\sum_{i=1}^n\left\{\frac{(X_i-\me(\theta,X_{i-1}))^2}{\me_2(\theta,X_{i-1})}+\log\me_2(\theta,X_{i-1})\right\}.
\end{equation}
Nevertheless, \eqref{eq:qcontrast} does not have a closed form because $\me(\theta,X_{i-1})$ and $\me_2(\theta,X_{i-1})$ are unknown. Therefore we substitute in \eqref{eq:qcontrast} closed approximations of $\me$ and $\me_2$ based on the It\^o-Taylor expansion. 

Let $f(y):=y,$
for $l\geq 0,$ under the assumption $A_5[2l]$, we have the following approximation (see Lemma 1, \cite{kess})
\begin{equation}\label{eq:appmean}
\me(\theta,X_{i-1})=r_l(\Delta_n,X_{i-1},\theta)+R(\theta,\Delta_n^{l+1},X_{i-1})
\end{equation}
where
$$r_l(\Delta_n,X_{i-1},\theta):=\sum_{i=0}^l \frac{\Delta_n^i}{i!}L_\theta^i f(x).$$
Now let us consider the function $(y-r_l(\Delta_n,X_{i-1},\theta))^2,$ which is for fixed $x,y$ and $\theta$ a polynomial in $\Delta_n$ of degree $2l.$ We indicate by $\overline g_{\Delta_n,x,\theta,l}(y)$ the sum of its first terms up to degree $l;$ that is $\overline g_{\Delta_n,x,\theta,l}(y)=\sum_{j=0}^l \Delta_n^j \overline g_{x,\theta}^j(y)$ where
\begin{align}
& \overline g_{x,\theta}^0(y)=(y-x)^2\label{eq:g0}\\
&\overline g_{x,\theta}^1(y)=-2(y-x)L_\theta f(x)\label{eq:g2}\\
 &\overline g_{x,\theta}^j(y)=-2(y-x)\frac{L_\theta^j f(x)}{j!}+\sum_{r,s\geq 1,r+s=j}\frac{L_\theta^r f(x)}{r!}\frac{L_\theta^s f(x)}{s!},\quad 2\leq j\leq l.\label{eq:gj}
\end{align}
Under the assumption $A_5[2(l-1)]$(i), we have that $L_\theta^r\overline g_{x,\theta}^j(y)$ is well-defined for $r+j=l$ and we set
\begin{equation}\label{eq:gamma}\Gamma_l(\Delta_n,x,\theta):=\sum_{j=0}^l\Delta_n^j\sum_{r=0}^{l-j}\frac{\Delta_n^r}{r!}L_\theta^r\overline g_{x,\theta}^j(x):=\sum_{j=0}^l\Delta_n^j\gamma_j(\theta,x),\end{equation} where $\gamma_j(\theta,x)$ are the coefficients of $\Delta_n^j$. Therefore by \eqref{eq:g0} to \eqref{eq:gamma}, we obtain, for instance, 
\begin{align*}
\gamma_0(\theta,x)&=L_\theta^0  \overline g_{x,\theta}^0(x)=0\\
\gamma_1(\theta,x)&=L_\theta  \overline g_{x,\theta}^0(x)=c(\beta,x)\\
\gamma_2(\theta,x)&=\frac{L_\theta^2  \overline g_{x,\theta}^0}{2}(x)+L_\theta  \overline g_{x,\theta}^1(x)+L_\theta^0  \overline g_{x,\theta}^2(x)\\
&=\frac12\left[b(\alpha,x)\frac{\partial }{\partial y}c(\beta,x)+2c(\beta,x)\frac{\partial }{\partial y}b(\alpha,x)\right]+\frac{c(\beta,x)}{4}\frac{\partial^2 }{\partial y^2} c(\beta,x)
\end{align*}

Let
$$\Gamma_l(\Delta_n,x,\theta):=\Delta_n c(\beta,x)[1+\overline \Gamma_l(\Delta_n,x,\theta)]$$
where
$\overline \Gamma_l(\Delta_n,x,\theta):=\frac{\sum_{j=2}^l\Delta_n^j\gamma_j(\theta,x)}{\Delta_n c(\beta,x)}.$
For $l\geq 0,$ under the assumption $A_5[2l]$(i), we have that (see Lemma 2, \cite{kess})
\begin{equation}\label{eq:appvar}
\me_2(\theta,X_{i-1})=\Delta_n c_{i-1}(\beta)[1+\overline \Gamma_l(\Delta_n,X_{i-1},\theta)]+R(\theta,\Delta_n^{l+1},X_{i-1}).
\end{equation}

It seems quite natural at this point to substitute  \eqref{eq:appmean} and \eqref{eq:appvar} into the expression \eqref{eq:qcontrast}. Nevertheless, in order to avoid technical difficulties related to the control of denominator and logarithmic we consider a further expansion in $\Delta_n$ of $(1+\overline\Gamma_l)^{-1}$ and $\log(1+\overline\Gamma_l)$.

Let $k_0=[p/2].$ Under the assumption $A_5[2k_0]$(i), we define the quasi-loglikelihood function of ${\bf X}_n$ as
\begin{align}\label{eq:contrast}
l_{p,n}(\theta):=l_{p,n}(\theta,{\bf X}_n):=\sum_{i=1}^n\texttt{l}_{p,i}(\theta)
\end{align}
where
\begin{align}\label{eq:contrast2}
\texttt{l}_{p,i}(\theta)&:=\frac{(X_i-r_{k_0}(\Delta_n,X_{i-1},\theta))^2}{2\Delta_nc_{i-1}(\beta)}\left\{1+\sum_{j=1}^{k_0}\Delta_n^j \de_j(\theta,X_{i-1})\right\}\\
&\quad+\frac12\left\{\log c_{i-1}(\beta)+\sum_{j=1}^{k_0}\Delta_n^j \ee_j(\theta,X_{i-1})\right\}\notag
\end{align}
and $\de_j,$ resp. $\ee_j,$ is the coefficient of $\Delta_n^j$ in the Taylor expansion of $(1+\overline\Gamma_{k_0+1}(\Delta_n,x,\theta))^{-1},$ resp. $\log(1+\overline\Gamma_{k_0+1}(\Delta_n,x,\theta)).$ It is not hard to show that, for example,
\begin{align*}
&\de_1(\theta,x)=-\ee_1(\theta,x)=-\frac{\gamma_2(\theta,x)}{c(\beta,x)},\\
&\de_2(\theta,x)=-\ee_2(\theta,x)=\frac{1}{c(\beta,x)}\left[\frac{\gamma_2^2(\theta,x)}{c(\beta,x)}-\gamma_3(\theta,x)\right].
\end{align*}
\begin{remark}
It is worth to point out that by assumptions $A_3$ and $A_5$ emerge that $\de_j$ and $\ee_j,$ for all $j\leq k_0,$ are three times differentiable with respect to $\theta.$ Furthermore, all their derivatives with respect to $\theta$ are of polynomial growth in $x$ uniformly in $\theta.$
\end{remark}

The contrast function \eqref{eq:contrast} yields to the maximum quasi-likelihood estimator $\hat\theta_{p,n}:=(\hat\alpha_{p,n},\hat\beta_{p,n})$ defined as 
\begin{equation}\label{eq:maxqlest}
l_{p,n}(\hat\theta_{p,n})=\inf_{\theta\in\Theta}l_{p,n}(\theta).
\end{equation}
Let $I(\theta_0)$ be the
Fisher information matrix at $\theta_0$ defined as follows
\begin{equation}
I(\theta_0):=\left(  \begin{matrix} 
    [I_b^{h,k}(\theta_0)]_{h,k=1,...,m_1} &   0 \\
  0 &  [I_\sigma^{h,k}(\theta_0)]_{h,k=1,...,m_2}\ \\
   \end{matrix}\right),
\end{equation}
where
\begin{align*}
I_b^{h,k}(\theta_0)&:=\int \left(\frac{\partial_{\alpha_h} b\,\partial_{\alpha_k} b}{c}\right)(\theta_0,x)\pi_0(\de x),\\
I_\sigma^{h,k}(\theta_0)&:=\frac12 \int \left(\frac{\partial_{\beta_h} c\,\partial_{\beta_k}c}{c^2}\right)(\beta_0,x)\pi_0(\de x).
\end{align*}

We recall an important asymptotic result which will be useful in the proof of our main theorem.


\begin{theorem}[\cite{kess}]\label{teo:estconv}
Let $p$ be an integer and $k_0=[p/2].$ Under assumptions $A_1$ to $A_{4}, A_5[2k_0]$ and $A_6,$ if $\Delta_n\to0,n\Delta_n\to \infty,$ as $n\to\infty,$ the estimator $\hat\theta_{p,n}$ is consistent; i.e.
\begin{equation}\label{eq:cons}
\hat\theta_{p,n}\overset{P_0}{\underset{n\to\infty}{\longrightarrow}}\theta_0.
\end{equation}

If in addition $n\Delta_n^p\to 0$ and $\theta_0\in Int(\Theta)$ then
\begin{equation}\label{eq:conest}
\varphi(n)^{-1/2}(\hat\theta_{p,n}-\theta_0)=  \left( \begin{matrix} 
      \sqrt{n\Delta_n}(\hat\alpha_{p,n}-\alpha_0)\\
      \sqrt{n}(\hat\beta_{p,n}-\beta_0)    \\
   \end{matrix}\right)\overset{d}{\underset{n\to\infty}{\longrightarrow}}N_{m_1+m_2}(0,I^{-1}(\theta_0)),
\end{equation}
where $$\varphi(n):=\left( \begin{matrix} 
  \frac{1}{n\Delta_n}I_{m_1}&  0 \\
0&\frac{1}{n}I_{m_2} \\
   \end{matrix} \right).
$$
\end{theorem}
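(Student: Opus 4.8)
The plan is to treat $\hat\theta_{p,n}$ as a minimum-contrast (M-)estimator for $l_{p,n}$ and run the classical two-step programme, consistency then asymptotic normality, but organized \emph{scale by scale}: the drift parameter $\alpha$ is identified only at rate $\sqrt{n\Delta_n}$ and the diffusion parameter $\beta$ at the faster rate $\sqrt n$, which is precisely the content of the normalizing matrix $\varphi(n)$. Two tools are used throughout. The first is the discrete ergodic lemma: under $A_2$ and $\Delta_n\to0$, $\frac1n\sum_{i=1}^n f(X_{i-1})\overset{P_0}{\to}\int f(x)\pi_0(\de x)$ for $f$ of polynomial growth, together with its conditional (stochastic) refinements. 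The second is the family of conditional-moment expansions for the increments: each $E_0^{i-1}[(X_i-X_{i-1})^{k}]$ equals a polynomial in $\Delta_n$ plus a remainder of type $R_{i-1}(\cdot)$, obtained from the It\^o--Taylor formula; these are the very estimates behind \eqref{eq:appmean} and \eqref{eq:appvar}, and in particular $E_0^{i-1}[X_i-r_{k_0}(\Delta_n,X_{i-1},\theta_0)]=R_{i-1}(\Delta_n^{k_0+1})$ and $E_0^{i-1}[(X_i-r_{k_0})^2]=\Delta_n c_{i-1}(\beta_0)(1+O(\Delta_n))+R_{i-1}(\Delta_n^{k_0+1})$.

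For consistency I first normalize by $1/n$. Using the expansions above (the corrections $\de_j,\ee_j$ being bounded) one obtains, uniformly in $\theta$,
\[
\frac1n\big(l_{p,n}(\theta)-l_{p,n}(\theta_0)\big)\overset{P_0}{\to}\frac12\int\Big\{\frac{c(\beta_0,x)}{c(\beta,x)}-1-\log\frac{c(\beta_0,x)}{c(\beta,x)}\Big\}\pi_0(\de x)=:\mathbb Y(\beta).
\]
Since $u-1-\log u\ge0$ with equality iff $u=1$, assumptions $A_3$ and $A_6$ make $\mathbb Y$ nonnegative and zero only at $\beta=\beta_0$, and the standard M-estimation argument gives $\hat\beta_{p,n}\overset{P_0}{\to}\beta_0$. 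The $1/n$ scale is blind to the drift, so to capture $\alpha$ I pass to the finer scale $1/(n\Delta_n)$: substituting $\hat\beta_{p,n}$ and isolating the $\Delta_n$-order contribution yields, again uniformly,
\[
\frac1{n\Delta_n}\big(l_{p,n}(\alpha,\hat\beta_{p,n})-l_{p,n}(\alpha_0,\hat\beta_{p,n})\big)\overset{P_0}{\to}\frac12\int\frac{(b(\alpha,x)-b(\alpha_0,x))^2}{c(\beta_0,x)}\pi_0(\de x),
\]
which is minimized only at $\alpha=\alpha_0$ by $A_6$; hence $\hat\alpha_{p,n}\overset{P_0}{\to}\alpha_0$. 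This two-scale bookkeeping is the first place where care is needed.

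For asymptotic normality, consistency and $\theta_0\in\mathrm{Int}(\Theta)$ ensure that for large $n$ the minimizer is interior, so $\partial_\theta l_{p,n}(\hat\theta_{p,n})=0$. A first-order Taylor expansion of the score about $\theta_0$ gives, with $\tilde\theta$ on the segment joining $\theta_0$ and $\hat\theta_{p,n}$,
\[
\varphi(n)^{-1/2}(\hat\theta_{p,n}-\theta_0)=-\big[\varphi(n)^{1/2}\partial_\theta^2 l_{p,n}(\tilde\theta)\varphi(n)^{1/2}\big]^{-1}\varphi(n)^{1/2}\partial_\theta l_{p,n}(\theta_0).
\]
The two ingredients are a martingale central limit theorem for $\varphi(n)^{1/2}\partial_\theta l_{p,n}(\theta_0)$ and a uniform law of large numbers for the normalized Hessian, $\varphi(n)^{1/2}\partial_\theta^2 l_{p,n}(\tilde\theta)\varphi(n)^{1/2}\overset{P_0}{\to}I(\theta_0)$. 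For the score one checks, via the conditional expansions, that up to a negligible drift the summands form a martingale-difference array with respect to $(\mathcal G_i^n)$: the $\alpha$-block is driven by $-(X_i-r_{k_0})\partial_\alpha b_{i-1}/c_{i-1}$, conditionally centred by the mean expansion, and the $\beta$-block by $\tfrac{\partial_\beta c_{i-1}}{2c_{i-1}}\big(1-(X_i-r_{k_0})^2/(\Delta_n c_{i-1})\big)$, centred by the variance expansion. The conditional-variance sums converge by the ergodic lemma to $I_b^{h,k}(\theta_0)$ and $I_\sigma^{h,k}(\theta_0)$, the cross block vanishing because it is a conditionally odd moment of the (asymptotically Gaussian) increment; the conditional Lindeberg condition follows from the moment bounds $A_4$. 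The same ergodic lemma and the polynomial-growth bounds of $A_5[2k_0]$ give the Hessian convergence, $I(\theta_0)$ being nonsingular by $A_3$ and $A_6$. Slutsky's theorem then delivers \eqref{eq:conest}.

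I expect the main obstacle to be the uniform control of the It\^o--Taylor remainders and, above all, showing that the conditional bias of the score is negligible after normalization. The subtlety is that differentiating the correction factor $\{1+\sum_{j}\Delta_n^j\de_j\}$ (and the matching $\sum_j\Delta_n^j\ee_j$ in the logarithmic part) produces terms whose conditional mean is \emph{a priori} of order $\Delta_n$; summed over $i$ and multiplied by $\sqrt{n\Delta_n}$ these would diverge. The coefficients $\de_j,\ee_j$ are, however, constructed precisely so that these apparent $O(\Delta_n^{j})$ contributions cancel for all $j\le k_0$ (the relations $\de_1=-\ee_1$, $\de_2=-\ee_2$ being the first instances), whence $E_0^{i-1}[\partial_\theta\texttt{l}_{p,i}(\theta_0)]=R_{i-1}(\Delta_n^{k_0+1})$. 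Consequently the normalized drift-block bias is of order $\sqrt{n\Delta_n}\,\Delta_n^{k_0}=\sqrt{n\Delta_n^{\,2k_0+1}}$ and the diffusion-block bias of order $\sqrt n\,\Delta_n^{k_0+1}=\sqrt{n\Delta_n^{\,2k_0+2}}$; since $2k_0+1=2[p/2]+1\ge p$, both are dominated by $n\Delta_n^{p}$ and tend to zero exactly under the hypothesis $n\Delta_n^{p}\to0$. Tracking the precise power of $\Delta_n$ carried by each remainder through the $\theta$-differentiations and the two normalizations, and checking that the same estimates deliver the uniform-in-$\theta$ convergence used for both consistency and the Hessian (together with the invariant-measure identity $\int (L_0 g)(x)\,\pi_0(\de x)=0$ for the latter), is the delicate part; granting it, the martingale CLT and the ergodic law of large numbers are routine.
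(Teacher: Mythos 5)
This theorem is quoted from Kessler (1997) \cite{kess}; the paper itself gives no proof of it, so the only benchmark is Kessler's original argument, which your proposal reconstructs essentially step for step: two-scale consistency (identify $\beta_0$ from $\frac1n(l_{p,n}(\theta)-l_{p,n}(\theta_0))$, then $\alpha_0$ at the finer scale $\frac{1}{n\Delta_n}$ after plugging in $\hat\beta_{p,n}$), a martingale CLT for $\varphi(n)^{1/2}\partial_\theta l_{p,n}(\theta_0)$ combined with a uniform law of large numbers for the normalized Hessian, and the score-bias control $E_0^{i-1}[\partial_\theta\texttt{l}_{p,i}(\theta_0)]=R_{i-1}(\Delta_n^{k_0+1})$ rendered negligible precisely by $n\Delta_n^p\to0$ via $2k_0+1\ge p$. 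Your proposal is correct and takes essentially the same route as the proof the paper relies on by citation.
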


\begin{remark}
We observe that $l_{2,n}$ does not coincide with \eqref{locga}, because \eqref{eq:contrast} contains the terms $\de_1$ and $\ee_1.$ Nevertheless, $l_n$ also yields an asymptotical efficient estimator for $\theta$ and then we refer to it when $p=2.$
\end{remark}
\begin{remark}\label{ref:adapt}
Under the same framework adopted in this paper, alternatively to $\hat\theta_{p,n}$, \cite{kessad} and \cite{uchyos12} proposed different types of adaptive maximum quasi-likelihood estimators. 
For instance, in  \cite{uchyos12}, the first type of adaptive estimator is introduced starting from the initial estimator $\tilde \beta_{0,n}$ is defined by $\mathbb U_n(\tilde \beta_{0,n})=\inf_{\beta\in\Theta_\beta}\mathbb U_n(\beta),$ where $$\mathbb U_n(\beta):=\frac12\sum_{i=1}^n\left\{\frac{(X_i-X_{i-1})^2}{\Delta_n c_{i-1}(\beta)}+\log  c_{i-1}(\beta)\right\}.$$ For $p\geq 2, k_0=[p/2]$ and $l_0=[(p-1)/2],$ the first type adaptive estimator $\tilde\theta_{p,n}=(\tilde \alpha_{k_0,n},\tilde\beta_{l_0,n})$ is defined for $k=1,2,...,k_0,$ as follows
\begin{align*}
&l_{p,n}(\tilde \alpha_{k,n},\tilde \beta_{k-1,n})=\inf_{\alpha\in\Theta_\alpha}l_{p,n}(\alpha,\tilde\beta_{k-1,n}),\\
&l_{p,n}(\tilde \alpha_{k,n},\tilde \beta_{k,n})=\inf_{\beta\in\Theta_\beta}l_{p,n}(\tilde \alpha_{k,n},\beta).\end{align*}
 The maximum quasi-likelihood estimator $\hat\theta_{p,n}$ and its adaptive versions, like $\tilde\theta_{p,n},$ are asymptotically equivalent (under a minor change of the initial assumptions); i.e. they have the same properties \eqref{eq:cons} and \eqref{eq:conest} (see \cite{uchyos12}). In what follow we will developed a test based on $\hat\theta_{p,n};$ nevertheless in light of the previous discussion, it would be possible to replace $\hat\theta_{p,n}$ with $\tilde\theta_{p,n}.$
\end{remark}

\section{Test statistics}\label{test}

The goal of this section is to define and to analyze test statistics for the following parametric hypotheses problem
\begin{equation}\label{hp}H_0:\theta=\theta_0,\quad \text{vs} \quad H_1:\theta\neq \theta_0,\end{equation}
concerning the stochastic differential equation \eqref{sde}. $X$ is partially observed and therefore we have discrete observations represented by ${\bf X}_n$. The motivation of this research is due to the fact that under non-simple alternative hypotheses do not exist uniformly most powerful parametric tests. Therefore, we need proper procedure for making the right decision concerning statistical hypothesis.

The first step consists in the introduction of a suitable measure regarding the ``discrepancy'', or the ``distance'', between diffusions belonging to the parametric class \eqref{sde}. Furthermore, we bearing in mind that as recalled in the previous section, for a general stochastic differential equation $X,$ the true probability transitions from $X_{i-1}$ to $X_i$ do not exist in closed form as well as the likelihood function. 
Suppose known the parameter $\beta$ and assume observable the sample path up to time $T=n\Delta_n.$
Let $Q_\beta$ be the probability law of the process solution to $\de Y_t=\sigma(\beta,Y_t)\de W_t.$ The continuous loglikelihood of $X$ is given by
\begin{equation*}
\log \frac{\de P_\theta}{\de Q_\beta}=\int_0^T\frac{b(\alpha,X_t)}{c(\beta,X_t)}\de X_t-\frac12\int_0^T\frac{b^2(\alpha,X_t)}{c(\beta,X_t)}\de t.
\end{equation*}
Thus we can consider the (squared) $L^2(Q_\beta)$-distance between the loglikelihoods $\log \frac{\de P_{\theta_1}}{\de Q_\beta}$ and $\log\frac{\de P_{\theta_2}}{\de Q_\beta}$ with $\theta_1,\theta_2\in\Theta$; that is
\begin{equation}\label{eq:dloglik}
D(\theta_1,\theta_2):=\left|\left|\log \frac{\de P_{\theta_1}}{\de Q_\beta}-\log\frac{\de P_{\theta_2}}{\de Q_\beta}\right|\right|_{L^2(Q_\beta)}^2=\int \left[\log \frac{\de P_{\theta_1}}{\de Q_\beta}-\log\frac{\de P_{\theta_2}}{\de Q_\beta}\right]^2 \de Q_\beta.
\end{equation}
Clearly for testing the hypotheses \eqref{hp} in the framework of discretely observed stochastic differential equations, the distance \eqref{eq:dloglik} is not useful. Nevertheless, the above $L^2-$metric for the continuos observations suggests to consider 
\begin{equation}\label{eq: empdist}
\mathbb D_{p,n}(\theta_1,\theta_2):=\frac1n\sum_{i=1}^n[\texttt{l}_{p,i}(\theta_1)-\texttt{l}_{p,i}(\theta_2)]^2,\quad \theta_1,\theta_2\in\Theta,
\end{equation}
which can be interpreted as the empirical version of \eqref{eq:dloglik}, where the theoretical loglikelihood is replaced by the quasi-loglikelihood defined by \eqref{eq:contrast}. The following theorem provides the convergence in probability of $\mathbb D_{p,n}.$
\begin{theorem}\label{main4teo}
 Let $p$ be an integer and $k_0=[p/2].$  Assume $ A_1- A_{4}, A_5[2k_0]$ and $A_6.$ Under $H_0,$ if $\Delta_n\to0,n\Delta_n\to\infty,$ as $n\to \infty$,  we have
that 
\begin{equation*}
\mathbb D_{p,n}(\theta,\theta_0)\overset{P_0}{\underset{n\to\infty}{\longrightarrow}} U(\beta,\beta_0)
\end{equation*}
uniformly in $\theta,$ where
\begin{align*}&U(\beta,\beta_0)\\
&:=\frac14\int  \left\{3\left[\frac{c(\beta_0,x)}{c(\beta,x)}-1\right]^2+\left[\log\left(\frac{c(\beta,x)}{c(\beta_0,x)}\right)\right]^2+2\left[\frac{c(\beta_0,x)}{c(\beta,x)}-1\right]\log\left(\frac{c(\beta,x)}{c(\beta_0,x)}\right)\right\}\pi_0(\de x).
\end{align*}
\end{theorem}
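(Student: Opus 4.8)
The plan is to isolate the leading order behavior of each summand $[\texttt{l}_{p,i}(\theta)-\texttt{l}_{p,i}(\theta_0)]^2$, compute its conditional expectation, and then pass to the limit via an ergodic law of large numbers together with a martingale fluctuation bound. First I would expand the single contrast increment. Recalling $r_{k_0}(\Delta_n,X_{i-1},\theta)=X_{i-1}+b_{i-1}(\alpha)\Delta_n+R_{i-1}(\Delta_n^2)$ and substituting into \eqref{eq:contrast2}, then working under $P_0$ so that $\varepsilon_i:=X_i-X_{i-1}-b_{i-1}(\alpha_0)\Delta_n$ is the natural residual, I would establish the key expansion
$$\texttt{l}_{p,i}(\theta)-\texttt{l}_{p,i}(\theta_0)=\frac12\left[\left(\frac{c_{i-1}(\beta_0)}{c_{i-1}(\beta)}-1\right)\chi_i+\log\frac{c_{i-1}(\beta)}{c_{i-1}(\beta_0)}\right]+\rho_{i,n}(\theta),$$
where $\chi_i:=\varepsilon_i^2/(\Delta_n c_{i-1}(\beta_0))$ and $\rho_{i,n}(\theta)$ collects the contributions of the drift mismatch $b_{i-1}(\alpha_0)-b_{i-1}(\alpha)$, of the higher order It\^o--Taylor terms in $r_{k_0}$, and of the correction coefficients $\de_j,\ee_j$. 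Using $A_3$ and $E_0^{i-1}[\varepsilon_i^2]=c_{i-1}(\beta_0)\Delta_n+R_{i-1}(\Delta_n^2)$, each such contribution satisfies $E_0^{i-1}[\rho_{i,n}^2]=R_{i-1}(\Delta_n)$, so $\rho_{i,n}$ is asymptotically negligible.

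Second I would compute the conditional moments producing the coefficients of $U$. With $A_{i}:=\tfrac{c_{i-1}(\beta_0)}{c_{i-1}(\beta)}-1$ and $B_{i}:=\log\tfrac{c_{i-1}(\beta)}{c_{i-1}(\beta_0)}$, both $\mathcal G_{i-1}^n$-measurable, the squared increment equals $\tfrac14(A_i\chi_i+B_i)^2$ up to a cross term with $\rho_{i,n}$. The It\^o--Taylor moment expansions underlying \cite{kess} give $E_0^{i-1}[\chi_i]=1+R_{i-1}(\Delta_n)$ and, crucially, $E_0^{i-1}[\chi_i^2]=3+R_{i-1}(\Delta_n)$, the factor $3$ being the fourth Gaussian moment of the normalized increment. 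Hence
$$E_0^{i-1}\big[[\texttt{l}_{p,i}(\theta)-\texttt{l}_{p,i}(\theta_0)]^2\big]=\frac14\big(3A_i^2+2A_iB_i+B_i^2\big)+R_{i-1}(\Delta_n^{1/2}),$$
the remainder being controlled by Cauchy--Schwarz, $|E_0^{i-1}[(A_i\chi_i+B_i)\rho_{i,n}]|\le\sqrt{E_0^{i-1}[(A_i\chi_i+B_i)^2]}\,\sqrt{E_0^{i-1}[\rho_{i,n}^2]}=R_{i-1}(\Delta_n^{1/2})$.

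Third I would split $\mathbb D_{p,n}(\theta,\theta_0)=\frac1n\sum_i E_0^{i-1}[Z_i(\theta)]+\frac1n\sum_i\big(Z_i(\theta)-E_0^{i-1}[Z_i(\theta)]\big)$ with $Z_i(\theta):=[\texttt{l}_{p,i}(\theta)-\texttt{l}_{p,i}(\theta_0)]^2$. For the predictable part I would invoke the ergodic law of large numbers for discretely sampled functionals of $X_{i-1}$ (a consequence of $A_2$ and $A_4$, in its form uniform in $\theta$), which yields
$$\frac1n\sum_{i=1}^n\frac14\big(3A_i^2+2A_iB_i+B_i^2\big)\overset{P_0}{\underset{n\to\infty}{\longrightarrow}}U(\beta,\beta_0)$$
uniformly in $\theta$; this is exactly the claimed integral, and the $R_{i-1}(\Delta_n^{1/2})$ remainders disappear since $\frac1n\sum_iR_{i-1}(\Delta_n^{1/2})=O_{P_0}(\Delta_n^{1/2})$. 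For the martingale part I would bound its conditional variance: since $Z_i(\theta)$ has $\mathcal G_{i-1}^n$-conditionally bounded moments (because $\chi_i$ has all moments bounded uniformly in $n$), one gets $\frac1{n^2}\sum_i E_0^{i-1}[(Z_i-E_0^{i-1}Z_i)^2]=O(1/n)\to0$, so the fluctuation vanishes in probability by Chebyshev.

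The main obstacle I anticipate is twofold: rigorously controlling the entire family of remainders $\rho_{i,n}(\theta)$ \emph{uniformly} in $\theta\in\Theta$, so that neither the higher order terms of $r_{k_0}$ nor the coefficients $\de_j,\ee_j$ survive in the limit; and upgrading both the ergodic and the martingale limits to hold uniformly in $\theta$. The uniformity typically requires the $C_\uparrow^{k,3}$ smoothness of the coefficients together with a maximal-inequality or tightness argument on the compact set $\Theta$, controlling $\sup_\theta$ of the $\theta$-derivatives of the summands; note that $U$ depends only on $\beta$ because $\alpha$ enters solely through the vanishing remainder $\rho_{i,n}$. The genuinely diffusion-specific feature is the appearance of the coefficient $3$ in front of $[c(\beta_0,x)/c(\beta,x)-1]^2$, which hinges entirely on the fourth-moment identity $E_0^{i-1}[\chi_i^2]\to 3$.
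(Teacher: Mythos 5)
Your proposal is correct and follows essentially the same route as the paper: your leading term $\tfrac12(A_i\chi_i+B_i)$ with remainder $\rho_{i,n}$ mirrors the paper's decomposition into $\varphi_{i,1}+\varphi_{i,4}$ (leading) plus $\varphi_{i,2},\varphi_{i,3}$ and the $\de_j,\ee_j$ corrections (negligible), and your key ingredients --- the conditional fourth-moment factor $3$, the predictable/martingale (triangular-array) splitting, the ergodic law of large numbers, and the tightness argument for uniformity in $\theta$ --- are exactly the content of the paper's Lemmas \ref{eq:lemma1}, \ref{lemmaGJ}, \ref{lemmak} and \ref{lemmaer}. The only difference is organizational: the paper expands the square of its four-term decomposition and applies Lemma \ref{lemmaer} to each of the resulting products, whereas you collapse all non-leading pieces into one remainder controlled by Cauchy--Schwarz.
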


The above result shows that $\mathbb D_{p,n}(\theta,\theta_0)$ is not a true approximation of $D_{p,n}(\theta,\theta_0)$ because it does not converge to $\int \left[\log(\pi_\theta(\de x)/\pi_0(\de x))\right]^2\pi_0(\de x).$ Nevertheless, the function \eqref{eq: empdist} allows to construct the main object of interest of the paper. Let $\hat\theta_n$ be the maximum quasi-likelihood estimator defined by \eqref{eq:maxqlest}, for testing the hypotheses \eqref{hp} we introduce the following class of test statistics
\begin{equation}\label{eq:famtest}
T_{p,n}(\hat\theta_{p,n},\theta_0):=n\mathbb D_{p,n}(\hat\theta_{p,n},\theta_0).
\end{equation}  
The first result concerns the weak convergence of $T_{p,n}(\hat\theta_{p,n},\theta_0).$ We prove that $T_{p,n}(\hat\theta_{p,n},\theta_0)$ is asymptotically distribution free under $H_0;$ namely it weakly converges to a chi-squared random variable with two degrees of freedom.

 \begin{theorem}\label{main}
Let $p$ be an integer and $k_0=[p/2].$ Assume $ A_1- A_{4}, A_5[2k_0]$ and $A_6.$ Under $H_0,$ if $\Delta_n\to0,n\Delta_n\to\infty, n\Delta_n^p\to 0,$ as $n\to \infty$,  we have
that
\begin{equation}\label{main2}
T_{p,n}(\hat\theta_{p,n},\theta_0)\overset{d}{\underset{n\to\infty}{\longrightarrow}} \chi_{m_1+m_2}^2.
\end{equation}
\end{theorem}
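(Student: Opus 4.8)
The plan is to expand the statistic around $\theta_0$ and reduce it to a Gaussian quadratic form whose weight is the inverse of the asymptotic covariance of $\hat\theta_{p,n}$. By \eqref{eq: empdist} and \eqref{eq:famtest},
\begin{equation*}
T_{p,n}(\hat\theta_{p,n},\theta_0)=\sum_{i=1}^n\left[\texttt{l}_{p,i}(\hat\theta_{p,n})-\texttt{l}_{p,i}(\theta_0)\right]^2 .
\end{equation*}
First I would perform a second order Taylor expansion of each summand about $\theta_0$, writing $\texttt{l}_{p,i}(\hat\theta_{p,n})-\texttt{l}_{p,i}(\theta_0)=G_i+H_i$ with the gradient term $G_i:=\partial_\theta\texttt{l}_{p,i}(\theta_0)'(\hat\theta_{p,n}-\theta_0)$ and the remainder $H_i:=\tfrac12(\hat\theta_{p,n}-\theta_0)'\mathcal H_i(\bar\theta_n)(\hat\theta_{p,n}-\theta_0)$, where $\mathcal H_i(\theta):=[\partial^2_{\theta_a\theta_b}\texttt{l}_{p,i}(\theta)]_{a,b}$ is the full Hessian and $\bar\theta_n$ lies on the segment joining $\theta_0$ and $\hat\theta_{p,n}$. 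Then $T_{p,n}=\sum_i G_i^2+2\sum_iG_iH_i+\sum_iH_i^2$, and the target is to prove that $\sum_iG_i^2$ carries the whole weak limit while the other two sums vanish in probability.

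The decisive preliminary step is to pin down the order of the score components at $\theta_0$. Using the conditional moment expansions underlying \cite{kess} (namely $E_0^{i-1}[X_i-X_{i-1}]=\Delta_n b_{i-1}(\alpha_0)+R_{i-1}(\Delta_n^2)$, $E_0^{i-1}[(X_i-X_{i-1})^2]=\Delta_n c_{i-1}(\beta_0)+R_{i-1}(\Delta_n^2)$, together with the corresponding third and fourth conditional moments), one checks that, modulo terms of smaller order in probability,
\begin{equation*}
\partial_{\alpha_h}\texttt{l}_{p,i}(\theta_0)\approx-\frac{(X_i-X_{i-1})\,\partial_{\alpha_h}b_{i-1}(\alpha_0)}{c_{i-1}(\beta_0)},\qquad \partial_{\beta_k}\texttt{l}_{p,i}(\theta_0)\approx\frac{\partial_{\beta_k}c_{i-1}(\beta_0)}{2\,c_{i-1}(\beta_0)}\left(1-\frac{(X_i-X_{i-1})^2}{\Delta_n\,c_{i-1}(\beta_0)}\right).
\end{equation*}
Both are conditionally centred $\mathcal G_i^n$-martingale increments, with $\partial_\alpha\texttt{l}_{p,i}(\theta_0)=O_{P_0}(\sqrt{\Delta_n})$ and $\partial_\beta\texttt{l}_{p,i}(\theta_0)=O_{P_0}(1)$. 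Setting $u_n:=\varphi(n)^{-1/2}(\hat\theta_{p,n}-\theta_0)$, which by \eqref{eq:conest} satisfies $u_n\overset{d}{\underset{n\to\infty}{\longrightarrow}}N_{m_1+m_2}(0,I^{-1}(\theta_0))$, the gradient sum becomes the exact quadratic form
\begin{equation*}
\sum_{i=1}^nG_i^2=u_n'\,\mathcal I_n\,u_n,\qquad \mathcal I_n:=\varphi(n)^{1/2}\left(\sum_{i=1}^n\partial_\theta\texttt{l}_{p,i}(\theta_0)\,\partial_\theta\texttt{l}_{p,i}(\theta_0)'\right)\varphi(n)^{1/2}.
\end{equation*}

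The core of the argument is then to show $\mathcal I_n\overset{P_0}{\underset{n\to\infty}{\longrightarrow}}I(\theta_0)$, block by block, via a law of large numbers for functionals of the ergodic diffusion (assumption $A_2$, with moment control from $A_4$). Writing $\eta_i:=(X_i-X_{i-1})/\sqrt{\Delta_n c_{i-1}(\beta_0)}$, the $\alpha\alpha$ block scales by $\tfrac1{n\Delta_n}$, combines with $E_0^{i-1}[\eta_i^2]=1$, and converges to $\int\frac{\partial_{\alpha_h}b\,\partial_{\alpha_k}b}{c}(\theta_0,x)\pi_0(\de x)=I_b^{h,k}(\theta_0)$; the $\beta\beta$ block scales by $\tfrac1n$, combines with $E_0^{i-1}[(1-\eta_i^2)^2]=2$, and converges to $\tfrac12\int\frac{\partial_{\beta_h}c\,\partial_{\beta_k}c}{c^2}(\beta_0,x)\pi_0(\de x)=I_\sigma^{h,k}(\theta_0)$; the off-diagonal $\alpha\beta$ block, scaled by $\tfrac1{n\sqrt{\Delta_n}}$, vanishes because the relevant mixed moment $E_0^{i-1}[\eta_i(1-\eta_i^2)]$ is zero to leading order. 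Hence $\mathcal I_n\to I(\theta_0)$, and since $u_n$ converges to $u_\infty\sim N_{m_1+m_2}(0,I^{-1}(\theta_0))$ while $I(\theta_0)$ is deterministic, Slutsky's theorem yields $\sum_iG_i^2\overset{d}{\to}u_\infty'I(\theta_0)u_\infty$. As $I(\theta_0)$ is exactly the inverse of the covariance of $u_\infty$, this standardized Gaussian quadratic form is $\chi^2_{m_1+m_2}$ distributed, which is \eqref{main2}.

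It remains to dispose of the remainder, and this is where I expect the real work to lie. One has to verify $\sum_iH_i^2\overset{P_0}{\to}0$; writing $H_i=\tfrac12 u_n'M_iu_n$ with $M_i:=\varphi(n)^{1/2}\mathcal H_i(\bar\theta_n)\varphi(n)^{1/2}$ and bounding, up to a constant, by $\|u_n\|^4\sum_i\|M_i\|^2$, the dominant contribution is the $\alpha\alpha$ block, of size $O_{P_0}(\tfrac1{n\sqrt{\Delta_n}})$ per index, so that the whole sum is $O_{P_0}(\tfrac1{n\Delta_n})=o_{P_0}(1)$ since $n\Delta_n\to\infty$; the cross term then follows from Cauchy--Schwarz, $|\sum_iG_iH_i|\le(\sum_iG_i^2)^{1/2}(\sum_iH_i^2)^{1/2}=o_{P_0}(1)$. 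The main obstacle is precisely the uniform control (in the intermediate point $\bar\theta_n$) of these Hessian sums and the justification that the higher-order correction coefficients $\de_j,\ee_j$ of \eqref{eq:contrast2} and the It\^o--Taylor remainders $R_{i-1}(\cdot)$ do not perturb the limits of $\mathcal I_n$; this requires the high-frequency laws of large numbers and the martingale machinery for ergodic diffusions in the rapidly increasing design $n\Delta_n^p\to0$, which are exactly the tools behind Theorem \ref{teo:estconv}.
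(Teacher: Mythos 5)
Your proof is correct, and it shares the paper's skeleton --- reduce $T_{p,n}$ to the Gaussian quadratic form $u_n'I(\theta_0)u_n$ with $u_n=\varphi(n)^{-1/2}(\hat\theta_{p,n}-\theta_0)$, then conclude by Theorem \ref{teo:estconv} and Slutsky --- but your decomposition is genuinely different from the paper's, and the difference is worth recording. The paper Taylor-expands the \emph{global} map $\theta\mapsto T_{p,n}(\theta,\theta_0)$ at $\theta_0$: since every summand is a square, $T_{p,n}(\theta_0,\theta_0)=0$ and $\partial_\theta T_{p,n}(\theta_0,\theta_0)=0$ identically, so the statistic equals a quadratic form in $u_n$ whose weight is an integral of the normalized Hessian $C_{p,n}(\theta,\theta_0)$ over intermediate points; the price is Proposition \ref{Prop}, namely uniform-in-$\theta$ convergence of $C_{p,n}(\theta,\theta_0)$ to a limit $2K(\theta,\theta_0)$, which requires handling all the mixed terms $[\texttt{l}_{p,i}(\theta)-\texttt{l}_{p,i}(\theta_0)]\partial_\theta^2\texttt{l}_{p,i}(\theta)$ that vanish at $\theta=\theta_0$ but not elsewhere (the limits $K_2$ and $K_4$), followed by a continuity argument over shrinking neighborhoods. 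You instead expand each $\texttt{l}_{p,i}$ \emph{separately}, so the only limit you must compute is that of $\mathcal I_n$; and since at $\theta_0$ those mixed Hessian terms vanish, your $\mathcal I_n$ is exactly $\tfrac12\,C_{p,n}(\theta_0,\theta_0)$, so your convergence $\mathcal I_n\to I(\theta_0)$ is precisely claim \eqref{eq:conC} of Proposition \ref{Prop}, obtained from the same Lemma \ref{lemmaer} computations (your $\eta_i$ moment bookkeeping matches the paper's $K_1$ and $K_3$ evaluated at $\theta_0$). What your route buys is that all dependence on intermediate points is confined to the remainders $H_i$, which you dispose of with crude per-index order bounds plus Cauchy--Schwarz ($\sum_i\|M_i\|^2=O_{P_0}((n\Delta_n)^{-1})$ and $\|u_n\|=O_{P_0}(1)$), so you never need the limit function $K(\theta,\theta_0)$ for $\theta\neq\theta_0$, which is the bulk of the paper's Proposition \ref{Prop}; what it costs is that the order bounds must hold uniformly along the random segment joining $\theta_0$ and $\hat\theta_{p,n}$, which the polynomial-growth assumptions $A_3$, $A_5$ and compactness of $\Theta$ do supply. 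Two harmless imprecisions you should fix in a write-up: each summand carries its own intermediate point $\bar\theta_{n,i}$, not a single common $\bar\theta_n$ (immaterial, for the reason just given); and the leading score terms should be centred at $X_i-r_{k_0}(\Delta_n,X_{i-1},\theta_0)$ rather than $X_i-X_{i-1}$ --- the exact scores are conditionally centred only up to $R_{i-1}(\Delta_n^{k_0+1})$ biases, which is where $n\Delta_n^p\to0$ is consumed (through Theorem \ref{teo:estconv}); neither point alters the limits you compute.
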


Given the level $\alpha\in(0,1)$, our criterion suggests to $$\text{reject}\, H_0\, \text{if }\, T_{p,n}(\hat\theta_{p,n},\theta_0)>\chi^2_{m_1+m_2,{\alpha}} ,$$ where $\chi^2_{m_1+m_2,{\alpha}}$ is the $1-\alpha$ quantile of the limiting random variable $\chi_{m_1+m_2}^2$; that is under $H_0$$$\lim_{n\to\infty}P_\theta(T_{p,n}(\hat\theta_{p,n},\theta_0)>\chi^2_{m_1+m_2,{\alpha}})=\alpha.$$

Under $H_1,$ the power function of the proposed test are equal to the following map
$$\theta\mapsto P_\theta\left(T_{p,n}(\hat\theta_{p,n},\theta_0)>\chi^2_{m_1+m_2,{\alpha}}\right)$$

Often a way to judge the quality of sequences of tests is provided by the powers at alternatives that become closer and closer to the null hypothesis. This justify the study of local limiting power. 
Indeed, usually the power functions of test statistic \eqref{eq:famtest} cannot be calculated explicitly. Nevertheless, $ P_\theta\left(T_{p,n}(\hat\theta_{p,n},\theta_0)>\chi^2_{m_1+m_2,{\alpha}}\right)$ can be studied and approximated under contiguous alternatives written as 
 \begin{equation}
 H_{1,n}:\theta=\theta_0+\varphi(n)^{1/2}h,
 \end{equation} 
 where $h\in \mathbb{R}^{m_1+m_2}$ such that $\theta_0+\varphi(n)^{1/2}h \in \Theta.$ In order to get a reasonable approximation of the power function, we analyze the asymptotic law of the test statistics under the local alternatives $H_{1,n}.$ We need the following assumption on the contiguity of probability measures (see \cite{van}):
 \begin{itemize}
 \item[$B_1.$] $P_{\theta_0+\varphi(n)h}$ is a sequence of contiguous probability measures with respect to $P_0;$  i.e. $ \lim_{n\to\infty} P_{0}(A_n)=0$ implies $\lim_{n\to\infty} P_{\theta_0+\varphi(n)^{1/2}h}(A_n)=0$ for every measurable sets $A_n$.
 \end{itemize}
 
 \begin{remark}
The assumption $B_1$ holds if we assume $A_1- A_{4}, A_5[2k_0]$ and the conditions:
\begin{itemize}
\item[(i)] there exists a constant $C>0$ such that the following estimates hold
$$|b(\alpha,x)|\leq C(1+|x|),\quad\left |\frac{\partial}{\partial x}b(\alpha,x)\right|+|\sigma(\beta,x)|+\left|\frac{\partial}{\partial x}\sigma(\beta,x)\right|\leq C$$
for all $(\alpha,\beta)\in\Theta$ and $x\in\mathbb R;$
\item[(ii)] there exists $C_0>0$ and $K>0$ such that
$$b(\alpha,x)x\leq -C_0|x|^2+K$$
 for all $(\alpha,x)\in\Theta_\alpha\times\mathbb R;$
 \item[(iii)] there exists a constant $C_1>0$ such that
 $$\frac{1}{C_1}\leq \sigma(\beta,x)\leq C_1.$$
\end{itemize}
Under the above assumptions, \cite{gobet} proved the Local Asymptotic Normality (LAN) for the likelihood of the ergodic diffusions \eqref{sde}; i.e.
$$\log\left(\frac{\de P_{\theta_0+\varphi(n)h}}{\de P_0}({\bf X}_n)\right)\overset{d}{\underset{n\to\infty}{\longrightarrow}} h' N_{m_1+m_2}(0, I(\theta_0))+\frac12 h' I(\theta_0)h.$$
By means of Le Cam's first lemma (see \cite{van}), LAN property implies the contiguity of $P_{\theta_0+\varphi(n)h}$ with respect to $P_0.$
 \end{remark}

Now, we are able to study the asymptotic probability distribution of $T_{p,n}$ under $H_{1,n}.$

  \begin{theorem}\label{main3teo}
 Let $p$ be an integer and $k_0=[p/2].$ Assume $A_1- A_{4}, A_5[2k_0], A_6$ and $B_1$ fulfill. Under the local alternative hypothesis $H_{1,n},$
 if $\Delta_n\to0,n\Delta_n\to\infty, n\Delta_n^p\to 0$ as $n\to \infty$, the following weak convergence holds
 \begin{equation}\label{main3}
T_{p,n}(\hat\theta_{p,n},\theta_0)\overset{d}{\underset{n\to\infty}{\longrightarrow}} \chi_{m_1+m_2}^2(h'I(\theta_0)h),
\end{equation}
where the random variable $\chi^2_{l+m}(h'I(\theta_0)h)$ is a non-central chi square random variable with $l+m$ degrees of freedom and non-centrality parameter $h'I(\theta_0)h$.
\end{theorem}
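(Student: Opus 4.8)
The plan is to reduce $T_{p,n}$ under the local alternative to a quadratic form in the normalized estimator, to identify the limiting law of that estimator as a \emph{shifted} Gaussian via Le Cam's third lemma, and finally to recognize the quadratic form of a non-centred normal as a non-central chi square. As a preliminary step I would reconstruct the asymptotic representation of $T_{p,n}$ underlying Theorem \ref{main}. Writing $Z_n:=\varphi(n)^{-1/2}(\hat\theta_{p,n}-\theta_0)$ and Taylor-expanding $\texttt{l}_{p,i}(\hat\theta_{p,n})-\texttt{l}_{p,i}(\theta_0)$ about $\theta_0$, one obtains
$$T_{p,n}(\hat\theta_{p,n},\theta_0)=Z_n'\,\mathcal I_n\,Z_n+o_{P_0}(1),\qquad \mathcal I_n:=\varphi(n)^{1/2}\Big[\textstyle\sum_{i=1}^n \partial_\theta\texttt{l}_{p,i}(\theta_0)\,\partial_\theta\texttt{l}_{p,i}(\theta_0)'\Big]\varphi(n)^{1/2}.$$
Under $P_0$ the matrix $\mathcal I_n$ converges in probability to the Fisher information $I(\theta_0)$: its block structure and the limits $I_b,I_\sigma$ follow from the conditional Gaussian moments of the increments $X_i-X_{i-1}$ together with the ergodic law of large numbers in $A_2$ (the drift and diffusion scores being of order $\sqrt{\Delta_n}$ and $1$ respectively, with the cross term vanishing). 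Hence $T_{p,n}=Z_n'I(\theta_0)Z_n+o_{P_0}(1)$.

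Next I would transfer this representation to the local alternative $H_{1,n}$. Since assumption $B_1$ asserts that $P_{\theta_0+\varphi(n)^{1/2}h}$ is contiguous to $P_0$, every $o_{P_0}(1)$ term is also $o_{P}(1)$ under $P_{\theta_0+\varphi(n)^{1/2}h}$, and the convergence $\mathcal I_n\overset{P_0}{\to}I(\theta_0)$ (being convergence to a constant) persists under the alternative. Therefore the same quadratic representation $T_{p,n}=Z_n'I(\theta_0)Z_n+o_{P}(1)$ holds under $H_{1,n}$, and it remains only to determine the weak limit of $Z_n$ under $H_{1,n}$.

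To find that limit I would invoke Le Cam's third lemma. From Theorem \ref{teo:estconv} the estimator is asymptotically linear, $Z_n=I(\theta_0)^{-1}S_n+o_{P_0}(1)$, where $S_n:=-\varphi(n)^{1/2}\partial_\theta l_{p,n}(\theta_0)$ is the normalized quasi-score satisfying $S_n\overset{d}{\to}N_{m_1+m_2}(0,I(\theta_0))$ under $P_0$, with limiting Gaussian $\Delta\sim N(0,I(\theta_0))$. The LAN expansion recalled before Theorem \ref{main3teo} gives $\log\big(\de P_{\theta_0+\varphi(n)^{1/2}h}/\de P_0\big)\overset{d}{\to}h'\Delta-\tfrac12 h'I(\theta_0)h$, driven by the same $\Delta$, so the joint limit of $(Z_n,\log(\de P_{\theta_0+\varphi(n)^{1/2}h}/\de P_0))$ under $P_0$ is jointly Gaussian with cross-covariance $\mathrm{Cov}\big(I(\theta_0)^{-1}\Delta,\;h'\Delta\big)=I(\theta_0)^{-1}I(\theta_0)h=h$. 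Le Cam's third lemma then shifts the mean of $Z_n$ by exactly this covariance vector, yielding
$$Z_n\overset{d}{\underset{n\to\infty}{\longrightarrow}}N_{m_1+m_2}\big(h,\,I^{-1}(\theta_0)\big)\qquad\text{under }H_{1,n}.$$

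Finally I would combine the pieces: under $H_{1,n}$, $T_{p,n}=Z_n'I(\theta_0)Z_n+o_P(1)$ with $Z_n\overset{d}{\to}Z\sim N(h,I^{-1}(\theta_0))$, so by the continuous mapping theorem $T_{p,n}\overset{d}{\to}Z'I(\theta_0)Z$. Putting $\zeta:=I(\theta_0)^{1/2}Z\sim N\big(I(\theta_0)^{1/2}h,\,I_{m_1+m_2}\big)$ gives $Z'I(\theta_0)Z=\|\zeta\|^2$, a sum of $m_1+m_2$ squared unit-variance Gaussians with mean vector $I(\theta_0)^{1/2}h$, which is precisely $\chi^2_{m_1+m_2}(\lambda)$ with $\lambda=\|I(\theta_0)^{1/2}h\|^2=h'I(\theta_0)h$, establishing \eqref{main3}. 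The main obstacle is the third step: one must verify that the quasi-score $S_n$ of the Kessler contrast \eqref{eq:contrast} and the central sequence of Gobet's LAN expansion share the \emph{same} Gaussian limit $\Delta$, so that the asymptotic covariance between $Z_n$ and the log-likelihood ratio reduces to $h$ and delivers the correct non-centrality; this asymptotic equivalence of scores is exactly what the rapidly increasing design $n\Delta_n^p\to0$ secures. The remainder transfer in the second step is routine given $B_1$, and the quadratic-form computation in the last step is immediate.
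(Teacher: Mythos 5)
Your overall architecture coincides with the paper's: reduce $T_{p,n}$ to a quadratic form $Z_n'I(\theta_0)Z_n+o_P(1)$ in $Z_n:=\varphi(n)^{-1/2}(\hat\theta_{p,n}-\theta_0)$ (the paper does this through its Taylor expansion \eqref{eq:taylor} and Proposition \ref{Prop}), transfer the remainder and the matrix convergence to $H_{1,n}$ by contiguity, identify the limit of $Z_n$ under $H_{1,n}$ as $N_{m_1+m_2}(h,I^{-1}(\theta_0))$, and conclude by continuous mapping; your closing computation identifying $Z'I(\theta_0)Z$ with $\chi^2_{m_1+m_2}(h'I(\theta_0)h)$ is correct. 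The divergence is in how the shifted normal limit is obtained, and this is where your argument has a genuine gap. The paper gets this step by citing Lemma 2 of \cite{kituch}, which asserts directly that $\varphi(n)^{-1/2}\bigl(\hat\theta_{p,n}-(\theta_0+\varphi(n)^{1/2}h)\bigr)\overset{d}{\to}N(0,I^{-1}(\theta_0))$ under $H_{1,n}$, so that adding back $h$ gives the shift. You instead invoke Le Cam's third lemma, and for that lemma contiguity ($B_1$) is not enough: you need the \emph{joint} weak convergence under $P_0$ of $\bigl(Z_n,\log(\de P_{\theta_0+\varphi(n)^{1/2}h}/\de P_0)\bigr)$ to a jointly Gaussian vector whose cross-covariance is exactly $h$. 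That in turn requires that Kessler's quasi-score $S_n$ and the central sequence of Gobet's LAN expansion be asymptotically equivalent, i.e.\ differ by $o_{P_0}(1)$ --- not merely that each converges marginally to $N(0,I(\theta_0))$, since identical marginal limits do not pin down the joint law or the covariance that produces the mean shift.

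You do flag this as ``the main obstacle,'' but you then dispose of it by asserting that the rapidly increasing design $n\Delta_n^p\to 0$ ``secures'' it. That assertion is the entire nontrivial content of the step: Gobet's central sequence is built from the true (intractable) transition densities via Malliavin-calculus estimates, and comparing it with the explicit quasi-score coming from the local Gaussian approximation is a substantive piece of analysis that neither this paper nor your sketch carries out --- it is precisely the work packaged in the external lemma the paper cites. A secondary imprecision of the same kind: you attribute the asymptotic linearity $Z_n=I(\theta_0)^{-1}S_n+o_{P_0}(1)$ to Theorem \ref{teo:estconv}, but that theorem only states the weak convergence of $Z_n$; the linear representation lives inside Kessler's proof and would also need to be extracted explicitly. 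So, as written, your proof is incomplete at its pivotal point; to close it along your route you would need either to prove the score equivalence or to establish the joint convergence of the pair (quasi-score, log-likelihood ratio) directly by a martingale CLT, both of which require tools beyond assumption $B_1$ and the results quoted in this paper. Everything else in your proposal (the quadratic expansion, the contiguity transfer, and the non-central $\chi^2$ identification) is correct and matches the paper's reasoning.
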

\begin{remark}
If we deal with $H_0 : \theta=\theta_0$ and the local alternative hypothesis $H_{1,n},$ Theorem \ref{main3teo} leads to the following approximation of the power functions
\begin{equation}
P_\theta\left(T_{p,n}(\hat\theta_{p,n},\theta_0)>\chi^2_{m_1+m_2,{\alpha}}\right)\cong
1-\mathbf{F}\left(\chi^2_{m_1+m_2,\alpha}\right),\quad n>>1,
\end{equation}
where $\mathbf{F}(\cdot)$ is the cumulative function
of the random variable $\chi^2_{m_1+m_2}(h'I(\theta_0)h)$.
\end{remark}


\begin{remark}
The Generalized Quasi-Likelihood Ratio, Wald, Rao type test statistics have been studied by \cite{kituch}, respectively, given by 
\begin{equation}
\label{GRLT}
L_{p,n}(\hat\theta_{p,n},\theta_0):= 2(l_{p,n}(\hat\theta_{p,n})-l_{p,n}(\theta_0))
\end{equation}
\begin{equation}
\label{WALD}
W_{p,n}(\hat\theta_{p,n},\theta_0):= (\varphi(n)^{-1/2}(\hat\theta_{p,n}-\theta_0))' I_{p,n}(\hat\theta_{p,n})\varphi(n)^{-1/2}(\hat\theta_{p,n}-\theta_0)
\end{equation}
\begin{equation}
\label{RAO}
R_{p,n}(\hat\theta_{p,n},\theta_0):=  (\varphi(n)^{1/2}\partial_\theta l_{p,n}(\theta_0))' I_{p,n}^{-1}(\hat\theta_{p,n})\varphi(n)^{1/2}\partial_\theta l_{p,n}(\theta_0),
\end{equation}
where 
$$ I_{p,n}(\theta)=\left(\begin{array}{cc }
  \frac{1}{n\Delta_n}\partial_\alpha^2l_{p,n}(\theta)    & \frac{1}{n\sqrt{\Delta_n}}\partial_{\alpha}\partial_{\beta} l_{p,n}(\theta)   \\
     \frac{1}{n\sqrt{\Delta_n}}\partial_{\beta}\partial_{\alpha} l_{p,n}(\theta)   &    \frac{1}{n}\partial_\beta^2l_{p,n}(\theta) 
\end{array}\right)
$$
and $R_{p,n}$ is well-defined if $ I_{p,n}(\theta)$ is nonsingular.
The above test statistics are asymptotically equivalent to $T_{p,n};$ i.e. under $H_0,$ $L_{p,n},W_{p,n}$ and $R_{p,n}$ weakly converge to a $\chi^2$ random variable.
\end{remark}

\begin{remark}
In \cite{DGItest}, the authors dealt with (for $p=2$) test statistics based on an empirical version of the true $\phi$-divergences; i.e.
\begin{equation}
2\sum_{i=1}^n\phi\left(\frac{\exp l_n(\theta)}{\exp  l_n(\theta_0)}{}\right)
\label{phidiv}
\end{equation}
where $\phi$ represents a suitable convex function and $l_n$ is given by \eqref{locga}.
In the present paper, the starting point is represented by the $L^2$-distance between two diffusion parametric models. Somehow, the approach developed in this work is close to that developed by \cite{aittest}, where a test based on the $L^2$-distance measure between the density function and its nonparametric estimator is introduced. 
\end{remark}

\begin{remark}
From a practical point of view, since sometimes $\alpha= \alpha_0$ and $\beta= \beta_0$ have different meanings, it is possible to deal with a stepwise procedure. For instance as $p=2,$ first, we test $\beta= \beta_0$ by means of 
$$T_n^\beta(\tilde\beta_{0,n},\beta_0):=\sum_{i=1}^n\left[\frac{(X_i-X_{i-1})^2}{\Delta_n}\left(\frac{1}{ c_{i-1}(\tilde\beta_{0,n})}-\frac{1}{ c_{i-1}(\beta_0)}\right)+\log \left( \frac{c_{i-1}(\tilde\beta_{0,n})}{c_{i-1}(\beta_0)}\right)\right]^2$$
and then, in the second step, we test  $\alpha= \alpha_0$ by taking into account
 $$T_n^\alpha(\tilde\alpha_{1,n},\alpha_0,\tilde\beta_{0,n}):=\sum_{i=1}^n[\texttt{l}_{2,i}(\tilde\alpha_{1,n}, \tilde\beta_{0,n})-\texttt{l}_{2,i}(\alpha_0,\tilde\beta_{0,n})]^2,$$
 where $\tilde\alpha_{1,n}$ and $\tilde\beta_{0,n}$ are the adaptive estimators defined in the Remark \ref{ref:adapt}.
 \end{remark}

\section{Numerical analysis}\label{sec:numerics}

Although all test statistics presented in the above and in the literature satisfy  the
same asymptotic results, for small sample sizes the performance of each test statistic is determined by the statistical model generating the data and the quality of the approximation of the quasi-likelihood function.
To put in evidence these effects we consider the two stochastic models presented in Section \ref{not}, namely the Ornstein-Uhlenbeck (OU in the tables) of equation \eqref{ou} and the CIR model of equation \eqref{cir}.
In this numerical study we  consider the power of the test under local alternatives for different test statistics:
\begin{itemize}
\item the $\phi$ divergence of equation \eqref{phidiv} with
$\phi(x) = 1-x+x \log(x)$, which is equivalent to the approximated Kullback-Leibler divergence (see, \cite{DGItest}). We use the label $AKL$ in the tables for this approximate KL;
\item the  $\phi$ divergence with $\phi(x) = \left(\frac{x-1}{x+1}\right)^2$: this was proposed in \cite{BS68}, we name it BS in the tables;
\item the Generalized Quasi-Likelihood Ratio test with $p=2$, see e.g., \eqref{GRLT}, denoted as GQLRT in the tables;
\item the Rao test statistics\footnote{We do not consider the Wald test of \eqref{WALD} because it was shown in \cite{kituch} that it performs similarly to the Rao test statistics.} $R(\hat\theta_{p,n},\theta_0)$ of equation \eqref{RAO}, denoted as RAO in the tables;
\item and the statistic $T_{p,n}(\hat\theta_{p,n},\theta_0)$ proposed in this paper and defined in equation \eqref{eq:famtest}, with $p=2$, denoted as $T_{2,n}$ in the tables.
\end{itemize}
The sample sizes have been chosen to be equal to $n=50, 100, 250, 500, 1000$ observations and time horizon is set to $T=n^\frac13$, in order to satisfy the asymptotic theory. For testing $\theta_0$ against the local alternatives $\theta_0 + \frac{h}{\sqrt{n\Delta_n}}$ for the parameters in the drift coefficient and $\theta_0 + \frac{h}{\sqrt{n}}$ for the parameters in the diffusion coefficient, $h$ is taken in a grid from $0$ to $1$, and $h=0$ corresponds to the null hypothesis $H_0$.
For the data generating process, we consider the following statistical models
\begin{itemize}
\item[OU:]  the one-dimensional Ornstein-Uhlenbeck model solution to  $\de X_t = \alpha_1(\alpha_2- X_t)\de t + \beta_1 \de W_t$, $X_0=1$, with $\theta_0=(\alpha_1, \alpha_2, \beta_1) = (0.5, 0.5, 0.25)$;
\item[CIR:]  the one-dimensional CIR model solution to  $\de X_t = \alpha_1(\alpha_2- X_t)\de t + \beta_1 \sqrt{X_t} \de W_t$, $X_0=1$, with $\theta_0=(\alpha_1, \alpha_2,\beta_1) = (0.5, 0.5, 0.125)$.
\end{itemize}

In each experiments the process have been simulated at high frequency using the Euler-Maruyama scheme and resampled to obtain $n=50, 100, 250, 500, 1000$ observations. 
Remark that, even if the Ornstein-Uhlenbeck process has a Gaussian transition density, this density is different from the Euler-Maruyama Gaussian density for non negligible time mesh $\Delta_n$ (see, \cite{iacus}).
For the simulation we user the R package {\it yuima} (see, \cite{iacusyosyui}).
Each experiment is replicated 1000 times and from the empirical distribution of each test statistic, say $S_n$, we define the rejection threshold of the test as $\tilde \chi^2_{3,0.05}$, i.e. $\tilde \chi^2_{3,0.05}$ is the 95\% quantile of the empirical distribution of $S_n,$ that is
$$
0.05 = \text{Freq}(S_n(\hat\theta_n, \theta_0) > \tilde \chi^2_{3,0.05}).
$$
Similarly, we define the empirical power function of the test as
$$
{\rm EPow}(h) =\text{Freq}(S_n(\hat\theta_n, \theta_0+\varphi(n)^{1/2}h) > \tilde \chi^2_{3,0.05}),
$$
where $\hat\theta_n$ is the maximum quasi-likelihood estimator defined in \eqref{eq:maxqlest}.
The choice of using the empirical threshold $\tilde \chi^2_{3,0.05}$ instead of the theoretical threshold $ \chi^2_{3,0.05}$ from the $\chi^2_3$ distribution, is due to the fact that otherwise the tests are non comparable. Indeed, the empirical level of the test is not $0.05$ for small sample sizes when $\chi^2_{3,0.05}$ is used as rejection threshold and, for example, when $h=0$ different choices of the test statistic produce different empirical levels of the test. 
Tables \ref{tab:OU} and \ref{tab:CIR} contain the empirical power function of each test.
In these tables  the bold face font is used to put in evidence the test statistics with the highest empirical power function ${\rm EPow}(h)$ for a given local alternative $h>0$.
As mentioned before, the natural benchmark test statistics is the generalised quasi likelihood ratio test (GQLRT).

From this numerical analysis we can see several facts:
\begin{itemize} 
\item the test statistic based on the AKL  test statistics does not perform as the GQLR test despite they are related to the same divergence; the latter being sometimes better;
\item the $T_{2,n}$ seems to be (almost) uniformly more powerful in this experiment;
\item all but RAO test seem to have a good behaviour when the alternative is sufficiently large;
\item for the CIR model, the RAO test does not perform well under the alternative hypothesis and this is probably because it requires very large $T$ which, in our case, is at most $T=10$. For the OU Gaussian case, the performance are better and in line from those presented in \cite{kituch} for similar sample sizes.
\end{itemize}
Therefore, we can conclude that, despite all the test statistics share the same asymptotic properties, the proposed $T_{p,n}$ seems to perform very well in the small sample case examined in the above Monte Carlo experiments, at least for $p=2$.

\begin{table}[ht]
\caption{Empirical power function ${\rm EPow}(h)$, for different sample sizes $n$ and local alternatives $h$. The empirical power and theoretical power is $0.05$. Data generating model: the 1-dimensional Ornstein-Uhlenbeck process.}
\label{tab:OU}
\begin{center}
{\tiny 
\begin{tabular}{c c}
$n=50$ & $n=100$\\
\begin{tabular}{rrrrrr}
  \hline
 & AKL & GQLRT & BS & RAO & $T_{2,n}$ \\ 
  \hline
h=0.00 & 0.050 & 0.050 & 0.050 & 0.050 & 0.050  \\ 
  h=0.01 & 0.044 &  0.048 & 0.046 & \bf 0.053 & 0.052  \\ 
  h=0.05 & 0.035 &   0.032 & 0.041 & \bf 0.057 & \bf 0.057  \\ 
  h=0.10 & 0.025 &   0.029 & 0.033 & 0.064 & \bf 0.077  \\ 
  h=0.20 & 0.011 &   0.031 & 0.042 & 0.078 & \bf 0.133  \\ 
  h=0.30 & 0.007 &    0.054 & 0.069 & 0.096 & \bf 0.239  \\ 
  h=0.40 & 0.007 &    0.108 & 0.147 & 0.121 & \bf 0.371  \\ 
  h=0.50 & 0.009 &    0.216 & 0.269 & 0.138 & \bf 0.559  \\ 
  h=0.60 & 0.021 &    0.359 & 0.448 & 0.146 & \bf 0.720  \\ 
  h=0.70 & 0.053 &    0.527 & 0.591 & 0.149 & \bf 0.842  \\ 
  h=0.80 & 0.120 &    0.670 & 0.736 & 0.150 & \bf 0.917  \\ 
  h=0.90 & 0.221 &    0.794 & 0.852 & 0.148 & \bf 0.966  \\ 
  h=1.00 & 0.383 &    0.882 & 0.910 & 0.145 & \bf 0.992  \\ 
   \hline
\end{tabular}
&
\begin{tabular}{rrrrrr}
  \hline
 & AKL & GQLRT & BS & RAO &  $T_{2,n}$ \\ 
  \hline
h=0.00 & 0.050 & 0.050 & 0.050 & 0.050 & 0.050  \\ 
  h=0.01 & 0.046 & 0.047 & 0.046 &  \bf 0.050 &  \bf  0.050  \\ 
  h=0.05 & 0.032 & 0.035 & 0.035 & 0.050 &\bf    0.055  \\ 
  h=0.10 & 0.022 & 0.029 & 0.030 & 0.058 & \bf   0.070  \\ 
  h=0.20 & 0.014 & 0.038 & 0.042 & 0.082 & \bf   0.141  \\ 
  h=0.30 & 0.009 & 0.089 & 0.083 & 0.101 & \bf   0.253  \\ 
  h=0.40 & 0.009 & 0.159 & 0.163 & 0.128 &  \bf  0.404  \\ 
  h=0.50 & 0.020 & 0.283 & 0.291 & 0.155 & \bf   0.609  \\ 
  h=0.60 & 0.051 &  0.465 & 0.472 & 0.183 &\bf  0.769 \\ 
  h=0.70 & 0.131 &  0.644 & 0.659 & 0.199 &\bf  0.876  \\ 
  h=0.80 & 0.244 &  0.789 & 0.801 & 0.213 &\bf  0.943  \\ 
  h=0.90 & 0.414 &  0.883 & 0.893 & 0.221 & \bf 0.984  \\ 
  h=1.00 & 0.608 &  0.937 & 0.944 & 0.225 & \bf 0.996  \\ 
   \hline
\end{tabular}\\
\\
$n=250$ & $n=500$\\
\begin{tabular}{rrrrrr}
  \hline
 & AKL & GQLRT & BS & RAO & $T_{2,n}$ \\ 
  \hline
h=0.00 & 0.050 & 0.050 & 0.050 & 0.050 & 0.050  \\ 
  h=0.01 & 0.044 & 0.049 & 0.050 & \bf 0.051 &  0.048  \\ 
  h=0.05 & 0.036 & 0.049 & 0.046 & 0.052 &\bf   0.057  \\ 
  h=0.10 & 0.028 & 0.048 & 0.050 & 0.058 &\bf   0.075  \\ 
  h=0.20 & 0.015 & 0.076 & 0.078 & 0.114 & \bf  0.143  \\ 
  h=0.30 & 0.022 & 0.153 & 0.157 & 0.168 & \bf  0.255 \\ 
  h=0.40 & 0.049 & 0.304 & 0.304 & 0.222 &\bf   0.452  \\ 
  h=0.50 & 0.118 & 0.486 & 0.496 & 0.280 &\bf   0.654  \\ 
  h=0.60 & 0.253 & 0.703 & 0.704 & 0.339 & \bf  0.822  \\ 
  h=0.70 & 0.436 &    0.847 & 0.851 & 0.389 & \bf  0.921  \\ 
  h=0.80 & 0.666 &    0.928 & 0.931 & 0.419 &  \bf 0.969 \\ 
  h=0.90 & 0.821 &    0.973 & 0.976 & 0.462 &  \bf 0.991  \\ 
  h=1.00 & 0.911 & 0.992 & 0.993 & 0.485 &\bf   1.000  \\ 
   \hline
\end{tabular}
&
\begin{tabular}{rrrrrr}
  \hline
 & AKL & GQLRT & BS & RAO & $T_{2,n}$ \\ 
  \hline
h=0.00 & 0.050 & 0.050 & 0.050 & 0.050 & 0.050  \\ 
  h=0.01 & 0.048 & 0.049 & 0.049 & \bf 0.052 &   0.051  \\ 
  h=0.05 & 0.038 & 0.044 & 0.043 &\bf 0.067 &   0.059  \\ 
  h=0.10 & 0.032 & 0.050 & 0.050 & \bf 0.082 &   0.075  \\ 
  h=0.20 & 0.030 & 0.084 & 0.080 & \bf 0.134 &   0.133  \\ 
  h=0.30 & 0.050 & 0.175 & 0.175 & 0.202 &\bf   0.250  \\ 
  h=0.40 & 0.138 & 0.329 & 0.323 & 0.279 & \bf  0.449  \\ 
  h=0.50 & 0.274 & 0.555 & 0.552 & 0.363 &\bf   0.673  \\ 
  h=0.60 & 0.493 & 0.751 & 0.747 & 0.454 &\bf   0.828  \\ 
  h=0.70 & 0.704 & 0.869 & 0.869 & 0.522 &\bf   0.934  \\ 
  h=0.80 & 0.847 & 0.957 & 0.957 & 0.584 & \bf  0.983  \\ 
  h=0.90 & 0.936 & 0.987 & 0.987 & 0.630 & \bf  0.996  \\ 
  h=1.00 & 0.982 & 0.997 & 0.997 & 0.678 & \bf  0.998 \\ 
   \hline
\end{tabular}\\
\\
\end{tabular}
\\
$n=1000$\\
\begin{tabular}{rrrrrr}
  \hline
 & AKL & GQLRT & BS & RAO & $T_n$  \\ 
  \hline
h=0.00 & 0.050 & 0.050 & 0.050 & 0.050 & 0.050  \\ 
  h=0.01 & 0.046 & 0.049 & 0.050 &   \bf 0.051 &  \bf 0.051  \\ 
  h=0.05 & 0.038 & 0.046 & 0.049 &   0.056 &  \bf 0.058  \\ 
  h=0.10 & 0.035 & 0.056 & 0.062 &  0.062 &  \bf 0.074  \\ 
  h=0.20 & 0.061 & 0.104 & 0.109 &   0.121 & \bf  0.134  \\ 
  h=0.30 & 0.122 & 0.182 & 0.187 &  0.193 &  \bf 0.241  \\ 
  h=0.40 & 0.219 & 0.359 & 0.372 & 0.291 &  \bf 0.442  \\ 
  h=0.50 & 0.426 & 0.600 & 0.605 & 0.398 &  \bf 0.662  \\ 
  h=0.60 & 0.655 & 0.786 & 0.794 & 0.507 & \bf  0.840  \\ 
  h=0.70 & 0.821 & 0.912 & 0.914 & 0.596 &  \bf 0.942  \\ 
  h=0.80 & 0.930 & 0.969 & 0.972 & 0.665 &  \bf 0.985  \\ 
  h=0.90 & 0.978 & 0.993 & 0.993 &   0.711 & \bf  0.994  \\ 
  h=1.00 & 0.994 & 0.997 & 0.997 &   0.760 & \bf  0.998   \\ 
   \hline
\end{tabular}
}
\end{center}
\end{table}

\begin{table}[ht]
\caption{Empirical power function ${\rm EPow}(h)$, for different sample sizes $n$ and local alternatives $h$. The empirical power and theoretical power is $0.05$. Data generating model: the 1-dimensional CIR process.}
\label{tab:CIR}
\begin{center}
{\tiny 
\begin{tabular}{c c}
$n=50$ & $n=100$\\
\begin{tabular}{rrrrrr}
  \hline
 & AKL & GQLRT & BS & RAO & $T_{2,n}$ \\ 
  \hline
h=0.00 & 0.050 & 0.050 & 0.050 & 0.050 & 0.050  \\ 
  h=0.01 & 0.041 & 0.044 & 0.045 & 0.052 &  \bf  0.053  \\ 
  h=0.05 & 0.025 &    0.032 & 0.031 & 0.059 & 0.\bf 071  \\ 
  h=0.10 & 0.009 &    0.040 & 0.042 & 0.068 & \bf 0.145  \\ 
  h=0.20 & 0.013 &    0.148 & 0.167 & 0.075 & \bf 0.371  \\ 
  h=0.30 & 0.044 &    0.416 & 0.458 & 0.069 &\bf  0.721  \\ 
  h=0.40 & 0.186 &    0.700 & 0.741 & 0.067 &\bf  0.923  \\ 
  h=0.50 & 0.475 &    0.883 & 0.907 & 0.067 &\bf  0.989  \\ 
  h=0.60 & 0.760 &    0.967 & 0.981 & 0.061 &\bf  0.997  \\ 
  h=0.70 & 0.913 &    0.994 & 0.998 & 0.059 & \bf 1.000  \\ 
  h=0.80 & 0.981 &   \bf  1.000 &  \bf   1.000 & 0.051 &\bf  1.000  \\ 
  h=0.90 & 0.997 &   \bf  1.000 &  \bf   1.000 & 0.041 & \bf 1.000  \\ 
  h=1.00 &  \bf 1.000 &   \bf  1.000 &    \bf 1.000 & 0.041 & \bf 1.000  \\ 
   \hline
\end{tabular}
&
\begin{tabular}{rrrrrr}
  \hline
 & AKL & GQLRT & BS & RAO & $T_{2,n}$ \\ 
  \hline
h=0.00 & 0.050 & 0.050 & 0.050 & 0.050 & 0.050  \\ 
  h=0.01 & 0.040 & 0.043 & 0.046 & \bf 0.053 &\bf  0.051  \\ 
  h=0.05 & 0.019 & 0.032 & 0.034 & 0.056 & \bf  0.070  \\ 
  h=0.10 & 0.010 & 0.054 & 0.051 & 0.062 & \bf  0.150  \\ 
  h=0.20 & 0.017 & 0.205 & 0.207 & 0.063 & \bf  0.461  \\ 
  h=0.30 & 0.102 &    0.537 & 0.553 & 0.064 &\bf 0.797  \\ 
  h=0.40 & 0.338 &    0.827 & 0.836 & 0.064 &\bf 0.957  \\ 
  h=0.50 & 0.685 &    0.950 & 0.958 & 0.063 &\bf 0.995  \\ 
  h=0.60 & 0.896 &    0.993 & 0.994 & 0.059 & \bf 1.000  \\ 
  h=0.70 & 0.977 &    0.999 & 0.998 & 0.056 & \bf 1.000  \\ 
  h=0.80 & 0.998 & \bf  1.000 & \bf  1.000 & 0.053 & \bf 1.000   \\ 
  h=0.90 & 0.999 & \bf  1.000 & \bf  1.000 & 0.048 & \bf 1.000  \\ 
  h=1.00 & \bf 1.000 & \bf  1.000 & \bf  1.000 & 0.044 & 1.000   \\ 
    \hline
\end{tabular}
\\
\\
$n=250$ & $n=500$\\
\begin{tabular}{rrrrrr}
  \hline
 & AKL & GQLRT & BS & RAO & $T_{2,n}$ \\ 
  \hline
h=0.00 & 0.050 & 0.050 & 0.050 & 0.050 & 0.050  \\ 
  h=0.01 & 0.042 & 0.049 & 0.046 & \bf 0.052 &  0.050  \\ 
  h=0.05 & 0.026 & 0.045 & 0.046 & 0.054 & \bf  0.071  \\ 
  h=0.10 & 0.021 & 0.086 & 0.084 & 0.057 &  \bf 0.144  \\ 
  h=0.20 & 0.093 & 0.347 & 0.342 & 0.062 & \bf  0.505  \\ 
  h=0.30 & 0.372 & 0.752 & 0.756 & 0.064 &  \bf 0.864  \\ 
  h=0.40 & 0.790 &    0.943 & 0.944 & 0.065 & \bf 0.977  \\ 
  h=0.50 & 0.952 &    0.994 & 0.994 & 0.064 &  \bf 1.000  \\ 
  h=0.60 & 0.996 &  \bf 1.000 & \bf  1.000 &   0.060 &  \bf 1.000 \\ 
  h=0.70 &  \bf 1.000 &  \bf 1.000 &  \bf 1.000 &  0.060 &  \bf 1.000  \\ 
  h=0.80 &  \bf 1.000 &  \bf 1.000 &  \bf 1.000 &  0.057 &  \bf 1.000  \\ 
  h=0.90 &  \bf 1.000 &  \bf 1.000 &  \bf 1.000 &  0.055 &  \bf 1.000  \\ 
  h=1.00 &  \bf 1.000 &  \bf 1.000 &  \bf 1.000 &  0.050 &  \bf 1.000  \\ 
    \hline
\end{tabular}
&
\begin{tabular}{rrrrrr}
  \hline
 & AKL & GQLRT & BS & RAO & $T_{2,n}$ \\ 
  \hline
 h=0.00 & 0.050 & 0.050 & 0.050 & 0.050 & 0.050  \\ 
  h=0.01 & 0.043 & 0.043 & 0.042 &\bf  0.051 & 0.048  \\ 
  h=0.05 & 0.030 & 0.046 & 0.044 & 0.051 &\bf   0.074  \\ 
  h=0.10 & 0.032 & 0.095 & 0.091 & 0.052 & \bf  0.147  \\ 
  h=0.20 & 0.180 & 0.384 & 0.380 & 0.055 & \bf  0.530  \\ 
  h=0.30 & 0.598 & 0.802 & 0.800 & 0.058 & \bf  0.869  \\ 
  h=0.40 & 0.898 & 0.972 & 0.972 & 0.058 & \bf  0.990  \\ 
  h=0.50 & 0.992 &\bf 0.998 &\bf 0.998 & 0.059 & \bf  0.998  \\ 
  h=0.60 & 0.998 & \bf 0.999 & \bf 0.999 & 0.057 &\bf   0.999  \\ 
  h=0.70 &   0.999 & \bf  1.000 & \bf  1.000 &   0.056 & \bf  1.000  \\ 
  h=0.80 & \bf  1.000 & \bf  1.000 & \bf  1.000 & 0.055 & \bf  1.000  \\ 
  h=0.90 & \bf  1.000 & \bf  1.000 & \bf  1.000 & 0.055 & \bf  1.000  \\ 
  h=1.00 & \bf  1.000 & \bf  1.000 & \bf  1.000 & 0.051& \bf  1.000  \\ 
    \hline
\end{tabular}\\
\\
\end{tabular}\\
$n=1000$\\
\begin{tabular}{rrrrrrr}
  \hline
 & AKL & GQLRT & BS & RAO & $T_{2,n}$ \\ 
  \hline
h=0.00 & 0.050 & 0.050 & 0.050 & 0.050 & 0.050  \\ 
  h=0.01 & 0.044 & 0.048 & 0.047 & \bf 0.051 & 0.050  \\ 
  h=0.05 & 0.035 & 0.059 & 0.057 & 0.051 &\bf  0.079  \\ 
  h=0.10 & 0.067 & 0.120 & 0.118 &  0.054 & \bf 0.144  \\ 
  h=0.20 & 0.274 & 0.429 & 0.428 & 0.058 & \bf 0.527  \\ 
  h=0.30 & 0.725 & 0.844 & 0.840 & 0.061 & \bf 0.886  \\ 
  h=0.40 & 0.953 & 0.983 & 0.983 & 0.062 & \bf 0.989  \\ 
  h=0.50 & 0.996 & 0.998 &  0.998 & 0.062 & \bf 0.999  \\ 
  h=0.60 & \bf 1.000 & \bf 1.000 & \bf 1.000 & 0.062 & \bf 1.000  \\ 
  h=0.70 & \bf 1.000 & \bf 1.000 & \bf 1.000 & 0.060 & \bf 1.000  \\ 
  h=0.80 & \bf 1.000 & \bf 1.000 & \bf 1.000 & 0.059 & \bf 1.000  \\ 
  h=0.90 & \bf 1.000 & \bf 1.000 & \bf 1.000 & 0.059 & \bf 1.000  \\ 
  h=1.00 & \bf 1.000 & \bf 1.000 & \bf 1.000 & 0.058 & \bf 1.000  \\ 
   \hline
\end{tabular}
}
\end{center}
\end{table}

\section{Proofs}\label{proofs}

In order to prove the theorems appearing in the paper, we need some preliminary results. Let us start with the following lemmas.

\begin{lemma}\label{lemma0}
For $k\geq 1$ and $t_{i-1}^n\leq t\leq t_i^n$
\begin{equation}\label{eq:lemm1}
E_0^{i-1}[|X_t-X_{i-1}|^k]\leq C_k|t-t_{i-1}^n|^{k/2}(1+|X_{i-1}|)^{C_k}.
\end{equation}
If $f:\Theta\times \mathbb R\to\mathbb R$ is of polynomial growth in $x$ uniformly in $\theta$ then
\begin{equation}\label{eq:lemm1bis}
E_0^{i-1}[f(\theta,X_{t})]\leq C_{t-t_{i-1}^n}(1+|X_{i-1}|)^{C},\quad t_{i-1}^n\leq t\leq t_i^n.
\end{equation}
\end{lemma}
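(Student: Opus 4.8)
The plan is to establish \eqref{eq:lemm1} first by classical conditional moment estimates for the diffusion increment, and then to deduce \eqref{eq:lemm1bis} from it. Throughout I work under $P_0$, so that on the interval $[t_{i-1}^n,t]$ one has $X_t-X_{i-1}=\int_{t_{i-1}^n}^t b(\alpha_0,X_s)\,\de s+\int_{t_{i-1}^n}^t \sigma(\beta_0,X_s)\,\de W_s$. First I would reduce to the case $k\geq 2$: for $1\leq k<2$, Jensen's inequality applied to the conditional expectation gives $E_0^{i-1}[|X_t-X_{i-1}|^k]\leq (E_0^{i-1}[|X_t-X_{i-1}|^2])^{k/2}$, so once the bound is proved for $k\geq 2$ the exponent $k/2$ on $|t-t_{i-1}^n|$ transfers automatically to the smaller powers.

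For $k\geq 2$, split the increment into drift and martingale parts via $|a+b|^k\leq C_k(|a|^k+|b|^k)$. For the drift, Hölder (Jensen) gives $|\int_{t_{i-1}^n}^t b(\alpha_0,X_s)\,\de s|^k\leq |t-t_{i-1}^n|^{k-1}\int_{t_{i-1}^n}^t |b(\alpha_0,X_s)|^k\,\de s$; for the martingale, the Burkholder--Davis--Gundy inequality followed by Jensen on the power $k/2\geq 1$ yields $E_0^{i-1}[|\int \sigma(\beta_0,X_s)\,\de W_s|^k]\leq C_k|t-t_{i-1}^n|^{k/2-1}\int_{t_{i-1}^n}^t E_0^{i-1}[|\sigma(\beta_0,X_s)|^k]\,\de s$. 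Assumption $A_1$, evaluated against a fixed reference point, gives the linear growth $|b(\alpha_0,x)|+|\sigma(\beta_0,x)|\leq C(1+|x|)$, and hence $(1+|X_s|)^k\leq C_k(1+|X_{i-1}|)^k+C_k|X_s-X_{i-1}|^k$.

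Writing $\phi(s):=E_0^{i-1}[|X_s-X_{i-1}|^k]$ and collecting these estimates produces an integral inequality of the form $\phi(t)\leq C_k(1+|X_{i-1}|)^k|t-t_{i-1}^n|^{k/2}+C_k\int_{t_{i-1}^n}^t\phi(s)\,\de s$, where $|t-t_{i-1}^n|\leq \Delta_n\leq 1$ has been used to absorb the subordinate powers $|t-t_{i-1}^n|^{k}$ and the prefactors $|t-t_{i-1}^n|^{k/2-1}$, $|t-t_{i-1}^n|^{k-1}$ into constants and into the dominant $|t-t_{i-1}^n|^{k/2}$ term. Since $X_{i-1}$ is $\mathcal G_{i-1}^n$-measurable it is a constant under $E_0^{i-1}$, and since $A_4$ ensures $\phi(t)<\infty$, Gronwall's lemma gives $\phi(t)\leq C_k(1+|X_{i-1}|)^k|t-t_{i-1}^n|^{k/2}$, which is \eqref{eq:lemm1}.

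Finally, \eqref{eq:lemm1bis} follows: polynomial growth of $f$ uniformly in $\theta$ gives $|f(\theta,X_t)|\leq C(1+|X_t|)^C$, and $(1+|X_t|)^C\leq C((1+|X_{i-1}|)^C+|X_t-X_{i-1}|^C)$, so taking $E_0^{i-1}$ and applying \eqref{eq:lemm1} with $k=C$, together with $|t-t_{i-1}^n|\leq \Delta_n\leq 1$, bounds the right-hand side by $C_{t-t_{i-1}^n}(1+|X_{i-1}|)^C$. The step requiring the most care is the conditional Gronwall argument: one must first secure the a priori finiteness and integrability of $\phi$ from $A_4$ (if needed through a localisation/stopping argument) before iterating, and one must check that all constants coming from BDG and the growth bounds are uniform in $i$, which holds because the coefficients $b$ and $\sigma$ do not depend on $n$.
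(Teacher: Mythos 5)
Your proof is correct: the reduction to $k\geq 2$ via conditional Jensen, the drift/martingale splitting with H\"older and the (conditional, via the Markov property) Burkholder--Davis--Gundy inequality, the linear growth bound extracted from $A_1$, and the Gronwall step, followed by the deduction of \eqref{eq:lemm1bis} from \eqref{eq:lemm1}, is exactly the standard argument. The paper itself gives no proof but simply cites Lemma 6 of Kessler (1997), whose proof proceeds along the same classical lines, so your proposal is essentially a self-contained version of the paper's (outsourced) argument.
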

\begin{proof}
See the proof of Lemma 6 in \cite{kess}.
\end{proof}
\begin{lemma}\label{eq:lemma1}
For $l\geq 1$
\begin{align}
&r_{l}(\Delta_n,X_{i-1},\theta)=X_{i-1}+\Delta_n b_{i-1}(\alpha)+R(\theta,\Delta_n^2, X_{i-1})\label{eq:proof1}\\
&E_{0}^{i-1}[(X_{i}-r_{l}(\Delta_n,X_{i-1},\theta))^2]=\Delta_n c_{i-1}(\beta_0)+R(\theta,\Delta_n^2, X_{i-1})\label{eq:proof2}\\
&E_{0}^{i-1}[(X_{i}-r_{l}(\Delta_n,X_{i-1},\theta))^3]=R(\theta,\Delta_n^2, X_{i-1})\label{eq:proof3}\\
&E_{0}^{i-1}[(X_{i}-r_{l}(\Delta_n,X_{i-1},\theta))^4]=3\Delta_n^2 c_{i-1}^2(\beta_0)+R(\theta,\Delta_n^3, X_{i-1})\label{eq:proof4}\\
&E_{0}^{i-1}[(X_{i}-r_{l}(\Delta_n,X_{i-1},\theta))^5]=R(\theta,\Delta_n^3, X_{i-1})\label{eq:proof5}\\
&E_{0}^{i-1}[(X_{i}-r_{l}(\Delta_n,X_{i-1},\theta))^6]=5\cdot 3\Delta_n^3 c_{i-1}^3(\beta_0)+R(\theta,\Delta_n^4, X_{i-1})\label{eq:proof6}\\
&E_{0}^{i-1}[(X_{i}-r_{l}(\Delta_n,X_{i-1},\theta))^7]=R(\theta,\Delta_n^4, X_{i-1})\label{eq:proof7}\\
&E_{0}^{i-1}[(X_{i}-r_{l}(\Delta_n,X_{i-1},\theta))^8]=7\cdot5\cdot 3\Delta_n^4 c_{i-1}^4(\beta_0)+R(\theta,\Delta_n^5, X_{i-1})\label{eq:proof8}
\end{align}
\end{lemma}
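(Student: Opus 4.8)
The plan is to reduce all eight identities to the conditional moments of the increment $D_i:=X_i-X_{i-1}$. Equation \eqref{eq:proof1} is itself purely algebraic: with $f(y)=y$ one has $L_\theta f(x)=b(\alpha,x)$, and under $A_5$ each $L_\theta^i f$ is of polynomial growth uniformly in $\theta$, so the definition $r_l(\Delta_n,X_{i-1},\theta)=\sum_{i=0}^{l}\frac{\Delta_n^i}{i!}L_\theta^i f(X_{i-1})$ separates as $X_{i-1}+\Delta_n b_{i-1}(\alpha)$ plus the terms of index $i\geq 2$, each bounded by $\Delta_n^2\,C(1+|X_{i-1}|)^C$, that is an $R(\theta,\Delta_n^2,X_{i-1})$ term. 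Setting $A:=r_l-X_{i-1}=\Delta_n b_{i-1}(\alpha)+R(\theta,\Delta_n^2,X_{i-1})$, which is $\mathcal G_{i-1}^n$-measurable, every target becomes
\[
E_0^{i-1}[(X_i-r_l)^k]=E_0^{i-1}[(D_i-A)^k]=\sum_{j=0}^{k}\binom{k}{j}(-A)^j\,E_0^{i-1}[D_i^{\,k-j}].
\]

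First I would record the increment moments. Applying the It\^o--Taylor (Dynkin) expansion under $P_0$ to the polynomial-growth functions $g_m(y):=(y-X_{i-1})^m$, exactly as in \cite{kess}, gives
\[
E_0^{i-1}[D_i^{\,m}]=\sum_{j=\lceil m/2\rceil}^{N}\frac{\Delta_n^j}{j!}L_0^j g_m(X_{i-1})+R(\theta_0,\Delta_n^{N+1},X_{i-1}),
\]
the sum starting at $j=\lceil m/2\rceil$ because $g_m$ and its first $m-1$ derivatives vanish at $y=X_{i-1}$, so each $L_0$ lowers the minimal power of $(y-X_{i-1})$ by at most two. For even $m=2s$ the leading coefficient $\frac{1}{s!}L_0^s g_{2s}(X_{i-1})$ comes only from the all-second-derivative path (the $b\,\partial_x$ part would not reduce the power fast enough), which equals $(2s-1)!!\,c_{i-1}^{\,s}(\beta_0)$; hence $E_0^{i-1}[D_i^{\,2s}]=(2s-1)!!\,c_{i-1}^{\,s}(\beta_0)\Delta_n^s+R(\theta_0,\Delta_n^{s+1},X_{i-1})$, reproducing the constants $1,3,5\cdot3,7\cdot5\cdot3$ of \eqref{eq:proof2}, \eqref{eq:proof4}, \eqref{eq:proof6}, \eqref{eq:proof8}. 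For odd $m=2s-1$ the same expansion yields $E_0^{i-1}[D_i^{\,m}]=O(\Delta_n^s)$, i.e. an $R(\theta_0,\Delta_n^s,X_{i-1})$ term. The up-to-four applications of $L_0$ (needed for $D_i^8$) and the polynomial growth of the remainder are covered by $A_5$ and Lemma \ref{lemma0}.

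Then I would assemble each line by counting orders in $\Delta_n$ in the binomial sum, using $A=O(\Delta_n)$. For even $k=2s$ the $j=0$ term carries the Gaussian leading term, while every $j\geq 1$ term is of order $\Delta_n^{\,j}\cdot E_0^{i-1}[D_i^{\,2s-j}]=O(\Delta_n^{\,j+\lceil(2s-j)/2\rceil})=O(\Delta_n^{\,s+1})$ or smaller, so the shift $A$ --- which is precisely where the general $\alpha$, rather than $\alpha_0$, enters --- is absorbed into $R(\theta,\Delta_n^{s+1},X_{i-1})$; this gives \eqref{eq:proof2}, \eqref{eq:proof4}, \eqref{eq:proof6}, \eqref{eq:proof8}. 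For odd $k=2s-1$ every term is $O(\Delta_n^{\,s})$, producing the pure remainders \eqref{eq:proof3}, \eqref{eq:proof5}, \eqref{eq:proof7}. The main obstacle is not conceptual but bookkeeping: one must check that all cross terms $(-A)^j E_0^{i-1}[D_i^{\,k-j}]$ and the It\^o--Taylor remainders genuinely satisfy the uniform polynomial-growth bound defining the symbol $R$. This follows by combining the bounds \eqref{eq:lemm1} and \eqref{eq:lemm1bis} of Lemma \ref{lemma0} with the uniform polynomial growth of the coefficients granted by $A_5$ and the nondegeneracy $A_3$, which keep products of $\mathcal G_{i-1}^n$-measurable factors within the class $R$.
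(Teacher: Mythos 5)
Your proof is correct and follows essentially the same route as the paper: both reduce the shifted moments $E_0^{i-1}[(X_i-r_l)^k]$ to moments of the raw increment $X_i-X_{i-1}$ and evaluate the latter by the It\^o--Taylor expansion, exploiting that $L_0^j\,(y-x)^m$ vanishes at $y=x$ for $j<m/2$ together with the remainder bound of Lemma \ref{lemma0}. The only difference is packaging: the paper cites Kessler's Lemma 7 for orders up to four and computes the cases $m=5,\dots,8$ one at a time (``long and cumbersome calculations''), whereas you make the binomial reduction with $A=r_l-X_{i-1}=O(\Delta_n)$ explicit and derive the general leading coefficient $(2s-1)!!\,c_{i-1}^s(\beta_0)$ uniformly in $m$ --- mathematically equivalent, just more self-contained.
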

\begin{proof}
The equalities from \eqref{eq:proof1}  to \eqref{eq:proof4} represent the statement of Lemma 7 in \cite{kess}. By using the same approach adopted for the proof of the aforementioned lemma, we observe that from \eqref{eq:proof1} to \eqref{eq:proof4}, the result \eqref{eq:proof5} and \eqref{eq:proof6} hold, if we are able to show that 
\begin{align}
&E_{0}^{i-1}[(X_{i}-X_{i-1})^5]=R(\theta,\Delta_n^3, X_{i-1})\label{eq:proof5bis}\\
&E_{0}^{i-1}[(X_{i}-X_{i-1})^6]=5\cdot 3\Delta_n^3 c_{i-1}^3(\beta_0)+R(\theta,\Delta_n^4, X_{i-1})\label{eq:proof6bis}
\end{align}
We only prove \eqref{eq:proof6bis}, because \eqref{eq:proof5bis} follows by means of similar arguments. By applying the Ito-Taylor formula (see Lemma 1, in \cite{fz}) to the function $f_x(y)=(y-x)^6$ we obtain
\begin{align*}
E_0^{i-1}[(X_i-X_{i-1})^6]&=f_{X_{i-1}}(X_{i-1})+\Delta_n L_0f_{X_{i-1}}(X_{i-1})\\
&\quad+\frac{\Delta_n^2}{2}L_0^2f_{X_{i-1}}(X_{i-1})+\frac{\Delta_n^3}{3!}L_0^3f_{X_{i-1}}(X_{i-1})\\
&\quad+ \int_0^{\Delta_n}\int_0^{u_1}\int_0^{u_2}\int_0^{u_3}E_0^{i-1}[L_0^4 f_{X_{i-1}}(X_{t_{i-1}^n+u_4})]\de u_1\de u_2\de u_3\de u_4.
\end{align*}
By applying \eqref{eq:lemm1bis}, we obtain
$$ \int_0^{\Delta_n}\int_0^{u_1}\int_0^{u_2}\int_0^{u_3}E_0^{i-1}[L_0^4 f_{X_{i-1}}(X_{t_{i-1}^n+u_4})]\de u_1\de u_2\de u_3\de u_4=R(\theta,\Delta_n^4, X_{i-1}).$$
Furthermore, by means of long and cumbersome calculations, we can show that $f_x(x)=L_0f_x(x)=L_0^2f_x(x)=0,$ while $L_0^3f_x(x)=5\cdot3\cdot 3!\Delta_n^3 c_{i-1}^3(\beta_0).$

Analogously to what done, from   \eqref{eq:proof1} to \eqref{eq:proof6}, the equalities \eqref{eq:proof7} and \eqref{eq:proof8} hold, if we are able to show that 
\begin{align}
&E_{0}^{i-1}[(X_{i}-X_{i-1})^7]=R(\theta,\Delta_n^4, X_{i-1}),\label{eq:proof7bis}\\
&E_{0}^{i-1}[(X_{i}-X_{i-1})^8]=7\cdot5\cdot 3\Delta_n^4 c_{i-1}^4(\beta_0)+R(\theta,\Delta_n^5, X_{i-1}).\label{eq:proof8bis}
\end{align}
We only prove \eqref{eq:proof8bis}, because \eqref{eq:proof7bis} follows by means of similar arguments. 
The application of the Ito-Taylor formula to the function $f_x(y)=(y-x)^8$ yields
\begin{align*}
E_0^{i-1}[(X_i-X_{i-1})^8]&=f_{X_{i-1}}(X_{i-1})+\Delta_n L_0f_{X_{i-1}}(X_{i-1})+\frac{\Delta_n^2}{2}L_0^2f_{X_{i-1}}(X_{i-1})\\
&\quad +\frac{\Delta_n^3}{3!}L_0^3f_{X_{i-1}}(X_{i-1})+\frac{\Delta_n^4}{4!}L_0^4f_{X_{i-1}}(X_{i-1})\\
&\quad+ \int_0^{\Delta_n}\int_0^{u_1}\int_0^{u_2}\int_0^{u_3}\int_0^{u_4}E_0^{i-1}[L_0^5 f_{X_{i-1}}(X_{t_{i-1}^n+u_5})]\de u_1\de u_2\de u_3\de u_4\de u_5
\end{align*}
By applying \eqref{eq:lemm1bis}, we get 
$$ \int_0^{\Delta_n}\int_0^{u_1}\int_0^{u_2}\int_0^{u_3}\int_0^{u_4}E_0^{i-1}[L_0^5 f_{X_{i-1}}(X_{t_{i-1}^n+u_5})]\de u_1\de u_2\de u_3\de u_4\de u_5=R(\theta,\Delta_n^5, X_{i-1}).$$
Furthermore, by means of long and cumbersome calculations, we can show that $f_x(x)=L_0f_x(x)=L_0^2f_x(x)=L_0^3f_x(x)=0$ while $L_0^4f_x(x)=7\cdot5\cdot 3\cdot 4!\Delta_n^4 c^4(\beta_0,x).$
\end{proof}

\begin{lemma}[Triangular arrays convegence]\label{lemmaGJ}
Let $U_i^n$ and $U$ be random variables, with $U_i^n$ being $\mathcal{G}_{i}^n$-measurable. The two following conditions imply $\sum_{i=1}^nU_i^n\overset{P}{\underset{n\to\infty}{\longrightarrow}} U$:
 \begin{align*}
 &\sum_{i=1}^nE[U_i^n|\mathcal{G}_{i-1}^n]\overset{P}{\underset{n\to\infty}{\longrightarrow}} U,\quad \sum_{i=1}^nE[(U_i^n)^2|\mathcal{G}_{i-1}^n]\overset{P}{\underset{n\to\infty}{\longrightarrow}} 0
 \end{align*}
\end{lemma}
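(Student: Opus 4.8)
The plan is to decompose each term into a conditional-mean part, which the first hypothesis handles directly, and a centred martingale-difference part, which the second hypothesis controls. Set $\xi_i^n:=U_i^n-E[U_i^n\,|\,\mathcal G_{i-1}^n]$, so that $E[\xi_i^n\,|\,\mathcal G_{i-1}^n]=0$ and $M_k^n:=\sum_{i=1}^k\xi_i^n$ is a martingale for the filtration $(\mathcal G_k^n)_k$. Since
\[
\sum_{i=1}^nU_i^n=M_n^n+\sum_{i=1}^nE[U_i^n\,|\,\mathcal G_{i-1}^n]
\]
and the last sum converges in probability to $U$ by assumption, it suffices to prove $M_n^n\overset{P}{\underset{n\to\infty}{\longrightarrow}} 0$.

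For the martingale part, the conditional-orthogonality identity $E[(\xi_i^n)^2\,|\,\mathcal G_{i-1}^n]=E[(U_i^n)^2\,|\,\mathcal G_{i-1}^n]-(E[U_i^n\,|\,\mathcal G_{i-1}^n])^2\le E[(U_i^n)^2\,|\,\mathcal G_{i-1}^n]$ shows that the predictable quadratic variation $V_k^n:=\sum_{i=1}^kE[(\xi_i^n)^2\,|\,\mathcal G_{i-1}^n]$ satisfies $V_n^n\le\sum_{i=1}^nE[(U_i^n)^2\,|\,\mathcal G_{i-1}^n]\overset{P}{\underset{n\to\infty}{\longrightarrow}} 0$. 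The main obstacle is precisely that the second hypothesis delivers only convergence \emph{in probability} of these conditional variances, not convergence of their expectations; hence one cannot directly bound $E[(M_n^n)^2]$ and apply Chebyshev's inequality.

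To circumvent this I would localise. Fix $\epsilon,\eta>0$. Each $V_k^n$ is $\mathcal G_{k-1}^n$-measurable, so $\tau_n:=\inf\{k\ge 0:V_{k+1}^n>\eta\}$ is a stopping time with $\{\tau_n\ge k\}=\{V_k^n\le\eta\}\in\mathcal G_{k-1}^n$. The stopped process $M_{k\wedge\tau_n}^n$ is then a martingale with orthogonal increments $\xi_i^n\mathbf 1_{\{i\le\tau_n\}}$, and $V_{n\wedge\tau_n}^n\le\eta$ by construction. Taking second moments and exploiting the conditional orthogonality,
\[
E\big[(M_{n\wedge\tau_n}^n)^2\big]=E\Big[\sum_{i=1}^n\mathbf 1_{\{i\le\tau_n\}}E[(\xi_i^n)^2\,|\,\mathcal G_{i-1}^n]\Big]=E\big[V_{n\wedge\tau_n}^n\big]\le\eta,
\]
so that $P(|M_{n\wedge\tau_n}^n|>\epsilon)\le\eta/\epsilon^2$.

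Finally I would remove the localisation. On $\{\tau_n\ge n\}$ one has $M_n^n=M_{n\wedge\tau_n}^n$, whence
\[
P(|M_n^n|>\epsilon)\le P(|M_{n\wedge\tau_n}^n|>\epsilon)+P(\tau_n<n)\le\frac{\eta}{\epsilon^2}+P(V_n^n>\eta).
\]
Because $V_n^n\overset{P}{\underset{n\to\infty}{\longrightarrow}} 0$, the second term vanishes, giving $\limsup_nP(|M_n^n|>\epsilon)\le\eta/\epsilon^2$; letting $\eta\downarrow 0$ yields $M_n^n\overset{P}{\underset{n\to\infty}{\longrightarrow}} 0$ and therefore $\sum_{i=1}^nU_i^n\overset{P}{\underset{n\to\infty}{\longrightarrow}} U$. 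The same concluding step can be obtained at once from Lenglart's inequality, which bounds $P(\sup_{k\le n}|M_k^n|\ge\epsilon)$ by $\eta/\epsilon^2+P(V_n^n\ge\eta)$.
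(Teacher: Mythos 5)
Your proof is correct. The paper itself gives no argument for this lemma: it simply points to Lemma 9 of \cite{gcj}, and your proof is essentially the standard argument behind that cited result --- split off the conditional means (handled by the first hypothesis), observe that the remaining martingale-difference sum has predictable quadratic variation dominated by $\sum_{i=1}^n E[(U_i^n)^2\,|\,\mathcal G_{i-1}^n]$, and, since that bracket converges only in probability, localise with a predictable stopping time (equivalently, invoke Lenglart's inequality) before applying Chebyshev. The localisation, which is the one delicate point, is handled correctly: $\{i\le\tau_n\}=\{V_i^n\le\eta\}\in\mathcal G_{i-1}^n$ keeps the stopped increments martingale differences with stopped bracket at most $\eta$, while $\{\tau_n<n\}=\{V_n^n>\eta\}$ has vanishing probability, so the bound $\limsup_n P(|M_n^n|>\epsilon)\le\eta/\epsilon^2$ follows and letting $\eta\downarrow 0$ finishes the argument.
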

\begin{proof} See the proof of Lemma 9 in \cite{gcj}.
\end{proof}
\begin{lemma}\label{lemmak}
Let $f:\Theta\times\mathbb{R}\to \mathbb{R}$ be such that $f(\theta,x)\in C_{\uparrow}^{1,1}(\Theta\times \re,\re).$ Let us assume $A_1-A_6$, if $\Delta_n\to0$ and $n\Delta_n\to\infty$ we have that
$$\frac1n\sum_{i=1}^n f_{i-1}(\theta)\overset{P_0}{\underset{n\to\infty}{\longrightarrow}} \int f(x,\theta)\pi_{0}(\de x)$$
uniformly in $\theta$.
\end{lemma}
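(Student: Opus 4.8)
The plan is to split the claim into a pointwise limit at each fixed $\theta$ and then to upgrade it to a limit that is uniform over the compact parameter set $\Theta$.

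First I would prove convergence for a fixed $\theta$. The starting point is the rescaling
$$\frac1n\sum_{i=1}^n f_{i-1}(\theta)=\frac{1}{n\Delta_n}\sum_{i=1}^n\Delta_n\, f_{i-1}(\theta),$$
which displays the left-hand side as $T^{-1}$ times a Riemann sum for $\int_0^T f(\theta,X_s)\de s$, with $T=n\Delta_n$. By the ergodicity assumption $A_2$ one has $\frac1T\int_0^T f(\theta,X_s)\de s\overset{P_0}{\to}\int f(\theta,x)\pi_0(\de x)$, so it remains to show that the Riemann-sum error
$$\frac1T\sum_{i=1}^n\int_{t_{i-1}^n}^{t_i^n}\left[f(\theta,X_{i-1})-f(\theta,X_s)\right]\de s$$
is negligible. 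Since $f\in C_{\uparrow}^{1,1}$, the mean value theorem and the polynomial growth of $\partial_x f$ give $|f(\theta,X_{i-1})-f(\theta,X_s)|\le C(1+|X_{i-1}|+|X_s|)^C|X_s-X_{i-1}|$; taking $E_0^{i-1}[\cdot]$, applying Cauchy--Schwarz together with the moment bounds \eqref{eq:lemm1} and \eqref{eq:lemm1bis} of Lemma \ref{lemma0}, and then using $A_4$, bounds the $L^1(P_0)$ norm of each increment by $C\,|s-t_{i-1}^n|^{1/2}$. Summing and integrating gives a total bound of order $T^{-1}n\Delta_n^{3/2}=\Delta_n^{1/2}\to 0$, so the Riemann-sum error vanishes in $L^1(P_0)$ and the pointwise convergence in probability follows.

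Next I would promote this to uniform convergence over $\Theta$. Write $G_n(\theta):=\frac1n\sum_{i=1}^n f_{i-1}(\theta)$ and $F(\theta):=\int f(\theta,x)\pi_0(\de x)$, the latter being continuous by dominated convergence since the $\theta$-derivatives of $f$ are of polynomial growth and $\pi_0$ has all moments finite. Because $f\in C_{\uparrow}^{1,1}$, the gradient obeys $\sup_{\theta\in\Theta}|\partial_\theta f(\theta,x)|\le C(1+|x|)^C$, whence
$$\sup_{\theta\in\Theta}\frac1n\sum_{i=1}^n|\partial_\theta f_{i-1}(\theta)|\le \frac{C}{n}\sum_{i=1}^n(1+|X_{i-1}|)^C,$$
and the right-hand side, free of $\theta$, converges in $P_0$-probability to the finite constant $C\int(1+|x|)^C\pi_0(\de x)$ by the scalar version of the statement already proven, applied to the dominating function $g(x)=C(1+|x|)^C$. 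Hence $G_n$ is Lipschitz in $\theta$ with a Lipschitz constant that is bounded in $P_0$-probability. Covering the compact set $\Theta$ by finitely many $\varepsilon$-balls, estimating $G_n-F$ at the centres by the pointwise convergence and inside each ball by this uniform Lipschitz bound for $G_n$ together with the continuity of $F$, I conclude $\sup_{\theta\in\Theta}|G_n(\theta)-F(\theta)|\overset{P_0}{\to}0$.

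The routine part is the fixed-$\theta$ convergence, which reduces to the ergodic theorem $A_2$ plus the increment estimate of Lemma \ref{lemma0}. The point demanding the most care is the passage to uniformity in $\theta$, where one must control a modulus of continuity for $G_n$ uniformly in $n$; I expect this to be the main obstacle, although---thanks to $f\in C_{\uparrow}^{1,1}$---it ultimately reduces to the $\theta$-free Lipschitz domination above, so that the scalar result applied to a single dominating function suffices and no chaining or Sobolev embedding is needed.
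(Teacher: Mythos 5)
Your proof is correct, and it is in substance the argument the paper points to: the paper's own ``proof'' is just the citation to Lemma 8 of \cite{kess}, and your pointwise step reproduces that argument exactly --- compare $\frac1n\sum_i f_{i-1}(\theta)$ with the continuous ergodic average $\frac1T\int_0^T f(\theta,X_s)\,\de s$, invoke $A_2$ (legitimate since $f(\theta,\cdot)$ has polynomial growth and $\pi_0$ has all moments), and kill the Riemann-sum error via the increment bound \eqref{eq:lemm1} of Lemma \ref{lemma0}, Cauchy--Schwarz and $A_4$, which gives an error of order $\Delta_n^{1/2}$. Where you genuinely diverge is the uniformity step. Kessler's proof --- and this paper, when it repeats the same argument in the proof of Lemma \ref{lemmaer} --- gets uniformity by viewing $Y_n(\theta)=\frac1n\sum_i f_{i-1}(\theta)$ as a $C(\Theta)$-valued random element, proving tightness from $\sup_n E_0[\sup_{\theta\in\Theta}|\partial_\theta Y_n(\theta)|]<\infty$ via the criterion of Theorem 16.5 in \cite{kall}, so that weak convergence to the deterministic limit upgrades the pointwise limits to sup-norm convergence in probability. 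You instead show the random Lipschitz constant of $G_n$ is stochastically bounded (by applying the already-proved scalar statement to the $\theta$-free dominating function $C(1+|x|)^C$) and run an explicit finite $\varepsilon$-net argument over the compact $\Theta$. Your route is more elementary and self-contained, avoiding weak-convergence theory on function spaces; it costs nothing essential, since both routes rest on the same gradient bound $\sup_\theta|\partial_\theta f(\theta,x)|\le C(1+|x|)^C$, and both implicitly need nearby points of $\Theta$ to be joinable inside $\Theta$ (e.g.\ $\Theta$ convex) for the mean value theorem to yield the Lipschitz/modulus-of-continuity estimate --- a standing convention in this literature.
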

\begin{proof} See the proof of Lemma 8 in \cite{kess}.

\end{proof}

\begin{lemma}\label{lemmaer}
Let $f:\Theta\times\mathbb{R}\to \mathbb{R}$ be such that $f(\theta,x)\in C_{\uparrow}^{1,1}(\Theta\times \re,\re).$ Let us assume $A_1-A_6$, if $\Delta_n\to0$ and $n\Delta_n\to\infty,$ as $n\to\infty,$ we have that
$$\frac{1}{n\Delta_n^j}\sum_{i=1}^n f_{i-1}(\theta)(X_{i}-r_{l}(\Delta_n,X_{i-1},\theta_0))^k\overset{P_0}{\underset{n\to\infty}{\longrightarrow}}
\begin{cases}0,& j=1,k=1,\\
\int f(\theta,x)c(\beta_0,x)\pi_{0}(\de x),& j=1,k=2,\\
\int f(\theta,x)R(\theta,1,x)\pi_0(\de x),& j=2,k=3,\\
0,& j=1,k=4,\\
3\int f(\theta,x)c^2(\beta_0,x)\pi_0(\de x),& j=2,k=4,
\end{cases}$$
uniformly in $\theta$.
\end{lemma}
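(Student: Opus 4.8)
The plan is to read every case off the triangular-array criterion of Lemma~\ref{lemmaGJ}. For each pair $(j,k)$ occurring in the statement I set
$$U_i^n:=\frac{1}{n\Delta_n^j}f_{i-1}(\theta)\bigl(X_i-r_l(\Delta_n,X_{i-1},\theta_0)\bigr)^k,$$
which is $\mathcal G_i^n$-measurable, so that the sum to be studied is exactly $\sum_{i=1}^n U_i^n$. It then suffices to check the two hypotheses of Lemma~\ref{lemmaGJ}: that the predictable sum $\sum_i E_0^{i-1}[U_i^n]$ converges to the announced limit, and that the sum of conditional second moments $\sum_i E_0^{i-1}[(U_i^n)^2]$ tends to $0$.

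For the first hypothesis I substitute the conditional-moment expansions of Lemma~\ref{eq:lemma1}. For $k=2$ and $k=4$ these read $E_0^{i-1}[(X_i-r_l)^2]=\Delta_n c_{i-1}(\beta_0)+R(\theta,\Delta_n^2,X_{i-1})$ and $E_0^{i-1}[(X_i-r_l)^4]=3\Delta_n^2 c_{i-1}^2(\beta_0)+R(\theta,\Delta_n^3,X_{i-1})$; for $k=1$ the mean expansion \eqref{eq:appmean} gives $E_0^{i-1}[X_i-r_l]=R(\theta_0,\Delta_n^{l+1},X_{i-1})$, and for $k=3$ one has $E_0^{i-1}[(X_i-r_l)^3]=R(\theta,\Delta_n^2,X_{i-1})$ by \eqref{eq:proof3}. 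In every case the leading contribution collapses to a sum $\frac1n\sum_i g_{i-1}(\theta)$ with $g\in C_{\uparrow}^{1,1}$ (for instance $g=f\,c(\beta_0,\cdot)$ when $(j,k)=(1,2)$, $g=3f\,c^2(\beta_0,\cdot)$ when $(j,k)=(2,4)$, and the extracted $\Delta_n^2$-coefficient of the third moment when $(j,k)=(2,3)$), whose limit is the stated integral against $\pi_0$ by Lemma~\ref{lemmak}. Every remainder term still carries a strictly positive power of $\Delta_n$ after division by $n\Delta_n^j$, hence is bounded by $\Delta_n^{s}\cdot\frac1n\sum_i|f_{i-1}(\theta)|(1+|X_{i-1}|)^{C}$ with $s>0$; since the average stays bounded (again Lemma~\ref{lemmak}) and $\Delta_n\to0$, these are $\oo_{P_0}(1)$. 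The two entries with limit $0$, namely $(1,1)$ and $(1,4)$, arise simply because the leading term itself already carries a vanishing factor $\Delta_n$.

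For the second hypothesis I use that $(U_i^n)^2$ involves the $2k$-th conditional moment. By Lemma~\ref{eq:lemma1},
$$E_0^{i-1}[(X_i-r_l)^{2k}]=c_{2k}\,\Delta_n^{k}c_{i-1}^{k}(\beta_0)+R(\theta,\Delta_n^{k+1},X_{i-1}),\qquad c_2=1,\;c_4=3,\;c_6=15,\;c_8=105,$$
so that the leading part of $\sum_i E_0^{i-1}[(U_i^n)^2]$ equals $\frac{c_{2k}}{n\Delta_n^{2j-k}}\cdot\frac1n\sum_i f_{i-1}^2(\theta)c_{i-1}^{k}(\beta_0)$. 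In each listed case $2j-k\le 1$, so the prefactor $\frac{1}{n\Delta_n^{2j-k}}$ is either $\frac{1}{n\Delta_n}$ (when $2j-k=1$) or of the form $\frac{\Delta_n^{\,k-2j}}{n}$ with $k-2j\ge0$ (when $2j-k\le 0$); both vanish because $n\Delta_n\to\infty$ and $\Delta_n\to 0$, while the average converges by Lemma~\ref{lemmak}. The remainder carries an even smaller factor. Hence $\sum_i E_0^{i-1}[(U_i^n)^2]\overset{P_0}{\underset{n\to\infty}{\longrightarrow}}0$, and Lemma~\ref{lemmaGJ} yields the convergence in probability for each fixed $\theta$.

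Uniformity in $\theta$ then follows because the dominant predictable part is the sum $\frac1n\sum_i g_{i-1}(\theta)$ with $g\in C_{\uparrow}^{1,1}$, to which the uniform form of Lemma~\ref{lemmak} applies verbatim; the fluctuating martingale remainder is controlled uniformly by rerunning the same second-moment estimate for the $\theta$-gradient of $U_i^n$ and passing from pointwise to uniform vanishing by a Sobolev/tightness argument on the compact $\Theta$. The main obstacle is exactly the case $(j,k)=(2,3)$, where the limit is nonzero yet \eqref{eq:proof3} supplies only the order bound $R(\theta,\Delta_n^2,X_{i-1})$: one must isolate its precise $\Delta_n^2$-coefficient, a polynomial-growth function of $X_{i-1}$ denoted $R(\theta,1,x)$ in the statement, and verify it is regular enough in $\theta$ for Lemma~\ref{lemmak} to apply. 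The remaining work, matching each remainder's power of $\Delta_n$ against the normalisation $n\Delta_n^j$, is routine bookkeeping.
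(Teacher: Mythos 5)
Your proposal is correct and follows essentially the same route as the paper: verifying the two conditions of the triangular-array Lemma~\ref{lemmaGJ} via the conditional-moment expansions of Lemma~\ref{eq:lemma1}, passing the predictable parts to ergodic averages through Lemma~\ref{lemmak}, and obtaining uniformity on the compact $\Theta$ by a tightness argument based on bounding the $\theta$-derivative of the partial sums. The only differences are organizational: you treat all five pairs $(j,k)$ in one unified bookkeeping scheme (the exponent $2j-k\le 1$), whereas the paper defers the cases $(1,1)$ and $(1,2)$ to Kessler's Lemmas 9--10 and writes out only $(2,3)$ and $(2,4)$; you also correctly flag the notational subtlety in the $(2,3)$ case, where the limit $\int f(\theta,x)R(\theta,1,x)\pi_0(\de x)$ requires identifying the exact $\Delta_n^2$-coefficient of the third conditional moment, a point the paper passes over silently.
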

\begin{proof}
The cases $j=1,k=1$ and $j=1,k=2$ coincide with Lemma 9 and Lemma 10 in \cite{kess} and then we use the same approach to show that remaining convergences hold true. 

By setting
 $$\zeta_i^n(\theta):=\frac{1}{n\Delta_n^2}f_{i-1}(\theta)(X_{i}-r_{l}(\Delta_n,X_{i-1},\theta_0))^3,$$
 we prove that the convergence holds for all $\theta.$ By taking into account Lemma \ref{eq:lemma1} 
 \begin{align*}
 &E_0^{i-1}[\zeta_i^n(\theta)]=\frac1n\sum_{i=1}^nf_{i-1}(\theta)R(\theta,1, X_{i-1})\overset{P_0}{\underset{n\to\infty}{\longrightarrow}}\int f(\theta,x)R(\theta,1,x)\pi_0(\de x),\\
 &E_0^{i-1}[(\zeta_i^n(\theta))^2]=\frac{1}{n^2\Delta_n}\sum_{i=1}^n[5\cdot 3 c_{i-1}^3(\beta_0)+R(\theta,1, X_{i-1})]\overset{P_0}{\underset{n\to\infty}{\longrightarrow}}0.
 \end{align*}
Therefore by Lemma \ref{lemmaGJ} we can conclude that
$$\zeta_i^n(\theta)\overset{P_0}{\underset{n\to\infty}{\longrightarrow}}\int f(\theta,x)R(\theta,1,x)\pi_0(\de x),$$
for all $\theta.$
 For the uniformity of the convergence we use the same arguments adopted in the proof of Lemma 8 in \cite{kess}. Hence, it is sufficient to prove the tightness of the sequence of random elements $$Y_n(\theta):=\frac1n\sum_{i=1}^n\frac{f_{i-1}(\theta)(X_{i}-r_{l}(\Delta_n,X_{i-1},\theta_0))^3}{\Delta_n^2}$$ taking values in the Banach space $C(\Theta)$ endowed with the sup-norm $||\cdot||_\infty.$  From the assumptions of lemma follows that $\sup_nE_0[\sup_{\theta\in\Theta}|\partial_\theta Y_n(\theta)|]<\infty$ which implies the tightness of $Y_n(\theta)$ for the criterion given by Theorem 16.5 in \cite{kall}. 
 
 By setting
 $$\zeta_i^n(\theta):=\frac{1}{n\Delta_n^2}f_{i-1}(\theta)(X_{i}-r_{l}(\Delta_n,X_{i-1},\theta_0))^4,$$
  we prove that the convergence holds for all $\theta.$
  By taking into account Lemma \ref{eq:lemma1} and Lemma \ref{lemmaer}
 \begin{align*}
 &E_0^{i-1}[\zeta_i^n(\theta)]=\frac1n\sum_{i=1}^nf_{i-1}(\theta)[3 c_{i-1}^2(\beta_0)+R(\theta,\Delta_n, X_{i-1})]\overset{P_0}{\underset{n\to\infty}{\longrightarrow}}3\int f(\theta,x)c^2(\beta_0,x)\pi_0(\de x),\\
 &E_0^{i-1}[(\zeta_i^n(\theta))^2]=\frac{1}{n^2}\sum_{i=1}^n[7\cdot 5\cdot 3 c_{i-1}^4(\beta_0)+R(\theta,\Delta_n, X_{i-1})]\overset{P_0}{\underset{n\to\infty}{\longrightarrow}}0.
 \end{align*}
Therefore by Lemma \ref{lemmaGJ} we get the pointwise convergence.
 For the uniformity of the convergence we proceed as done above.
\end{proof}

Before to proceed with the proofs of the main theorems of the paper, we introduce some useful quantities coinciding with (4.2)$-$(4.8) appearing in \cite{kess}. We can write down
\begin{align}\label{eq:diff}
\texttt{l}_{p,i}(\theta)-\texttt{l}_{p,i}(\theta_0)=\varphi_{i,1}(\theta,\theta_0)+\varphi_{i,2}(\theta,\theta_0)+\varphi_{i,3}(\theta,\theta_0)+\varphi_{i,4}(\theta,\theta_0),
\end{align}
where
\begin{align*}
\varphi_{i,1}(\theta,\theta_0)&:=\frac{(X_i-r_{k_0}(\Delta_n,X_{i-1},\theta_0))^2}{2\Delta_n}\left\{\frac{1+ \sum_{j=1}^{k_0}\Delta_n^j \de_j(\theta,X_{i-1})}{c_{i-1}(\beta)}-\frac{1+ \sum_{j=1}^{k_0}\Delta_n^j \de_j(\theta_0,X_{i-1})}{c_{i-1}(\beta_0)}\right\},\\
\varphi_{i,2}(\theta,\theta_0)&:=\frac{(X_i-r_{k_0}(\Delta_n,X_{i-1},\theta_0))(r_{k_0}(\Delta_n,X_{i-1},\theta_0)-r_{k_0}(\Delta_n,X_{i-1},\theta))}{\Delta_nc_{i-1}(\beta)}\\
&\quad\times\left\{1+ \sum_{j=1}^{k_0}\Delta_n^j \de_j(\theta,X_{i-1})\right\},\\
\varphi_{i,3}(\theta,\theta_0)&:=\frac{(r_{k_0}(\Delta_n,X_{i-1},\theta_0)-r_{k_0}(\Delta_n,X_{i-1},\theta))^2}{2\Delta_nc_{i-1}(\beta)}\left\{1+ \sum_{j=1}^{k_0}\Delta_n^j \de_j(\theta,X_{i-1})\right\},\\
\varphi_{i,4}(\theta,\theta_0)&:=\frac12\log\left(\frac{c_{i-1}(\beta)}{c_{i-1}(\beta_0)}\right)+ \frac12\sum_{j=1}^{k_0}\Delta_n^j (\ee_j(\theta,X_{i-1})-\ee_j(\theta_0,X_{i-1})).
\end{align*}

Furthermore
\begin{align}\label{eq:deralpha}
\partial_{\alpha_h} \texttt{l}_{p,i}(\theta)=\eta_{i,1}^h(\theta)+\eta_{i,2}^h(\theta), \quad h=1,2,...,m_1,
\end{align}
where
\begin{align*}
&\eta_{i,1}^h(\theta):=-(\partial_{\alpha_h} r_{k_0}(\Delta_n,X_{i-1},\theta))(X_i-r_{k_0}(\Delta_n,X_{i-1},\theta))\frac{\left\{1+\sum_{j=1}^{k_0}\Delta_n^j \de_j(\theta,X_{i-1})\right\}}{\Delta_nc_{i-1}(\beta)},\\
&\eta_{i,2}^h(\theta):=(X_i-r_{k_0}(\Delta_n,X_{i-1},\theta))^2\frac{\sum_{j=1}^{k_0}\Delta_n^j \partial_{\alpha_h}\de_j(\theta,X_{i-1})}{2\Delta_nc_{i-1}(\beta)}+\frac12\sum_{j=1}^{k_0}\Delta_n^j \partial_{\alpha_h}\ee_j(\theta,X_{i-1}),
\end{align*}
and
\begin{align}\label{eq:derbeta}
\partial_{\beta_k}\texttt{l}_{p,i}(\theta)=\xi_{i,1}^k(\theta)+\xi_{i,2}^k(\theta)+\xi_{i,3}^k(\theta), \quad k=1,2,...,m_2,
\end{align}
where
\begin{align*}
&\xi_{i,1}^k(\theta):=\frac{(X_i-r_{k_0}(\Delta_n,X_{i-1},\theta))^2}{2\Delta_nc_{i-1}(\beta)}\left\{\sum_{j=1}^{k_0}\Delta_n^j \partial_{\beta_k}\de_j(\theta,X_{i-1})\right\}+\frac12\sum_{j=1}^{k_0}\Delta_n^j \partial_{\beta_k}\ee_j(\theta,X_{i-1}),\\
&\xi_{i,2}^k(\theta):=-\frac{(X_i-r_{k_0}(\Delta_n,X_{i-1},\theta))^2\partial_{\beta_k} c_{i-1}(\beta)}{2\Delta_nc_{i-1}^2(\beta)}\left\{1+\sum_{j=1}^{k_0}\Delta_n^j \de_j(\theta,X_{i-1})\right\}+\frac{\partial_{\beta_k} c_{i-1}(\beta)}{2c_{i-1}(\beta)},\\
&\xi_{i,3}^k(\theta):=-(\partial_{\beta_k} r_{k_0}(\Delta_n,X_{i-1},\theta))(X_i-r_{k_0}(\Delta_n,X_{i-1},\theta))\frac{\left\{1+\sum_{j=1}^{k_0}\Delta_n^j \de_j(\theta,X_{i-1})\right\}}{\Delta_nc_{i-1}(\beta)}.
\end{align*}

From \eqref{eq:diff} it is possible to derive
\begin{align}\label{eq:deralphaalpha}
\partial_{\alpha_h\alpha_k}^2\texttt{l}_{p,i}(\theta):=\delta_{i,1}^{h,k}(\theta)+\delta_{i,2}^{h,k}(\theta)+\delta_{i,3}^{h,k}(\theta)+\delta_{i,4}^{h,k}(\theta),\quad h,k=1,2,...,m_1,\end{align}
where
\begin{align*}
\delta_{i,1}^{h,k}(\theta)&:=\frac{(X_i-r_{k_0}(\Delta_n,X_{i-1},\theta_0))^2}{2c_{i-1}(\beta)}\{(\partial_{\alpha_h\alpha_k}^2 \de_1)_{i-1}(\theta)+R(\theta,\Delta_n, X_{i-1})\},\\
\delta_{i,2}^{h,k}(\theta)&:= \frac{(X_i-r_{k_0}(\Delta_n,X_{i-1},\theta_0))}{c_{i-1}(\beta)}\{-\partial_{\alpha_h\alpha_k}^2 b_{i-1}(\alpha)+R(\theta,\Delta_n, X_{i-1})\},\\
\delta_{i,3}^{h,k}(\theta)&:=\frac12\Delta_n\partial_{\alpha_h\alpha_k}^2\ee_1(\theta,X_{i-1}),\notag\\
\delta_{i,4}^{h,k}(\theta)&:=\Delta_n\left\{\frac{\partial_{\alpha_h\alpha_k}^2 b_{i-1}(\alpha)(b_{i-1}(\alpha)-b_{i-1}(\alpha_0))+\partial_{\alpha_h} b_{i-1}(\alpha)\partial_{\alpha_k} b_{i-1}(\alpha)}{c_{i-1}(\beta)}+R(\theta,\Delta_n, X_{i-1})\right\},\notag
\end{align*}
\begin{align}
\partial_{\beta_h\beta_k}^2\texttt{l}_{p,i}(\theta):=\nu_{i,1}^{h,k}(\theta)+\nu_{i,2}^{h,k}(\theta)+\nu_{i,3}^{h,k}(\theta), \quad h,k=1,2,...,m_2,
\end{align}
where
\begin{align*}
\nu_{i,1}^{h,k}(\theta)&:=\frac{(X_i-r_{k_0}(\Delta_n,X_{i-1},\theta_0))^2}{2\Delta_n}\{(\partial_{\beta_h\beta_k}^2 c^{-1})_{i-1}(\beta)+R(\theta,\Delta_n, X_{i-1})\},\\
\nu_{i,2}^{h,k}(\theta)&:= \frac12(X_i-r_{k_0}(\Delta_n,X_{i-1},\theta_0))R(\theta,1, X_{i-1})),\\
\nu_{i,3}^{h,k}(\theta)&:=\frac12(\partial_{\beta_h\beta_k}^2\log c)_{i-1}(\beta)+R(\theta,\Delta_n, X_{i-1})),\notag
\end{align*}
and
\begin{align}\label{eq:dersecalphabeta}
\partial_{\alpha_h\beta_k}^2\texttt{l}_{p,i}(\theta):=\mu_{i,1}(\theta)+\mu_{i,2}(\theta),\quad h=1,2,...,m_1, k=1,2,...,m_2,
\end{align}
where
\begin{align*}
\mu_{i,1}(\theta)&:=\frac{(X_i-r_{k_0}(\Delta_n,X_{i-1},\theta_0))^2}{2\Delta_n}R(\theta,\Delta_n, X_{i-1}),\\
\mu_{i,2}(\theta)&:= \frac{(X_i-r_{k_0}(\Delta_n,X_{i-1},\theta_0))}{\Delta_n}R(\theta,\Delta_n, X_{i-1})+R(\theta,\Delta_n, X_{i-1}).\notag
\end{align*}

\begin{proof}[Proof of Theorem \ref{main4teo}]

We observe that
\begin{align*}
\mathbb D_{p,n}(\theta,\theta_0)=\frac1n\sum_{i=1}^n\left\{\sum_{k=1}^4(\varphi_{i,k}(\theta,\theta_0))^2+2\sum_{j<k})\varphi_{i,j}(\theta,\theta_0)\varphi_{i,k}(\theta,\theta_0)\right\}.\\
\end{align*}
Under $H_0,$ from Lemma \ref{lemmak} and Lemma \ref{lemmaer}, we derive
\begin{align*}
\frac1n\sum_{i=1}^n(\varphi_{i,1}(\theta,\theta_0))^2&=\frac1n\sum_{i=1}^n\left[\frac{(X_i-r_{k_0}(\Delta_n,X_{i-1},\theta_0))^4}{4\Delta_n^2}\left\{\frac{1}{c_{i-1}(\beta)}-\frac{1}{c_{i-1}(\beta_0)}+R(\theta,\Delta_n,X_{i-1})\right\}^2\right]\\
&=\frac1n\sum_{i=1}^n\left[\frac{(X_i-r_{k_0}(\Delta_n,X_{i-1},\theta_0))^4}{4\Delta_n^2}\left\{\frac{1}{c_{i-1}(\beta)}-\frac{1}{c_{i-1}(\beta_0)}\right\}^2\right]+\oo_{P_0}(1)\\
&\overset{P_0}{\underset{n\to\infty}{\longrightarrow}}\frac34\int  c^2(\beta_0,x)\left\{\frac{1}{c(\beta,x)}-\frac{1}{c(\beta_0,x)}\right\}^2\pi_0(\de x)\\
\frac1n\sum_{i=1}^n(\varphi_{i,2}(\theta,\theta_0))^2&=\frac1n\sum_{i=1}^n\left[\frac{(X_i-r_{k_0}(\Delta_n,X_{i-1},\theta_0))^2}{c_{i-1}^2(\beta_0)}[b_{i-1}(\alpha_0)-b_{i-1}(\alpha)]^2\right]+\oo_{P_0}(1)\\
&\overset{P_0}{\underset{n\to\infty}{\longrightarrow}}0\\
\frac1n\sum_{i=1}^n(\varphi_{i,3}(\theta,\theta_0))^2&=\frac1n\sum_{i=1}^n\left[\frac{\Delta_n^2[b_{i-1}(\alpha_0)-b_{i-1}(\alpha)]^4}{4c_{i-1}^2(\beta)}\right]+\oo_{P_0}(1)\\
&\overset{P_0}{\underset{n\to\infty}{\longrightarrow}}0\\
\frac1n\sum_{i=1}^n(\varphi_{i,4}(\theta,\theta_0))^2&=\frac1n\sum_{i=1}^n\frac14\left[\log\left(\frac{c_{i-1}(\beta)}{c_{i-1}(\beta_0)}\right)\right]^2+\oo_{P_0}(1)\\
&\overset{P_0}{\underset{n\to\infty}{\longrightarrow}}\frac14\int \left[\log\left(\frac{c(\beta,x)}{c(\beta_0,x)}\right)\right]^2\pi_0(\de x)\\
\frac1n\sum_{i=1}^n\varphi_{i,1}(\theta,\theta_0)\varphi_{i,4}(\theta,\theta_0)&=\frac1n\sum_{i=1}^n\frac{(X_i-r_{k_0}(\Delta_n,X_{i-1},\theta_0))^2}{4\Delta_n}\left\{\frac{1}{c_{i-1}(\beta)}-\frac{1}{c_{i-1}(\beta_0)}\right\}\\
&\quad\times\log\left(\frac{c_{i-1}(\beta)}{c_{i-1}(\beta_0)}\right)+\oo_{P_0}(1)\\
&\overset{P_0}{\underset{n\to\infty}{\longrightarrow}}\frac14\int c(\beta_0,x)\left\{\frac{1}{c(\beta,x)}-\frac{1}{c(\beta_0,x)}\right\}\log\left(\frac{c(\beta,x)}{c(\beta_0,x)}\right)\pi_0(\de x)\\
\frac1n\sum_{i=1}^n\varphi_{i,1}(\theta,\theta_0)\varphi_{i,j}(\theta,\theta_0)&\overset{P_0}{\underset{n\to\infty}{\longrightarrow}}0,\quad j=2,3,\\
\frac1n\sum_{i=1}^n\varphi_{i,2}(\theta,\theta_0)\varphi_{i,j}(\theta,\theta_0)&\overset{P_0}{\underset{n\to\infty}{\longrightarrow}}0,\quad j=3,4,\\
\frac1n\sum_{i=1}^n\varphi_{i,3}(\theta,\theta_0)\varphi_{i,4}(\theta,\theta_0)&\overset{P_0}{\underset{n\to\infty}{\longrightarrow}}0,
\end{align*}
uniformly in $\theta.$ Thus the statement of the theorem immediately follows.
\end{proof}

Let
\begin{align}
C_{p,n}(\theta,\theta_0):=  \left( \begin{matrix} 
\frac{1}{n\Delta_n}[\partial^2_{\alpha_h\alpha_k} T_{p,n}(\theta,\theta_0) ]_{\substack{h=1,...,m_1 \\k=1,...,m_1}}   &\frac{1}{n\sqrt{\Delta_n}}[	\partial^2_{\alpha_h\beta_k} T_{p,n}(\theta,\theta_0) ]_{\substack{h=1,...,m_1 \\k=1,...,m_2}}   \\
    \frac{1}{n\sqrt{\Delta_n}}[\partial^2_{\alpha_h\beta_k} T_{p,n}(\theta,\theta_0)]_{\substack{h=1,...,m_1 \\k=1,...,m_2}} &  \frac1n 	[\partial^2_{\beta_h\beta_k} T_{p,n}(\theta,\theta_0)  ]_{\substack{h=1,...,m_2 \\k=1,...,m_2}}  \\
   \end{matrix}\right)
\end{align}
where
\begin{align}
\partial^2_{\alpha_h\alpha_k} T_{p,n}(\theta,\theta_0)
&=2\sum_{i=1}^n\left\{\partial_{\alpha_h}\texttt{l}_{p,i}(\theta)\partial_{\alpha_k}\texttt{l}_{p,i}(\theta)+[\texttt{l}_{p,i}(\theta)-\texttt{l}_{p,i}(\theta_0)]\partial_{\alpha_h\alpha_k}^2\texttt{l}_{p,i}(\theta)\right\},\label{eq:firsttermC}\\
\partial^2_{\beta_h\beta_k} T_{p,n}(\theta,\theta_0)&=2\sum_{i=1}^n\left\{\partial_{\beta_h}\texttt{l}_{p,i}(\theta)\partial_{\beta_k}\texttt{l}_{p,i}(\theta)+[\texttt{l}_{p,i}(\theta)-\texttt{l}_{p,i}(\theta_0)]\partial_{\beta_h\beta_k}^2\texttt{l}_{p,i}(\theta)\right\},\label{eq:secondtermC}
\\
\partial^2_{\alpha_h\beta_k} T_{p,n}(\theta,\theta_0)&=2\sum_{i=1}^n\left\{\partial_{\alpha_h}\texttt{l}_{p,i}(\theta)\partial_{\beta_k}\texttt{l}_{p,i}(\theta)+[\texttt{l}_{p,i}(\theta)-\texttt{l}_{p,i}(\theta_0)]\partial_{\alpha_h\beta_k}^2\texttt{l}_{p,i}(\theta)\right\}.\label{}\end{align}

The following proposition concerning the asymptotic behavior of $C_{p,n}(\theta,\theta_0)$ plays a crucial role in the proof of Theorem \ref{main}.
\begin{proposition}\label{Prop}
Under $H_0,$ assume $ A_1- A_6$ and  $\Delta_n\to0,n\Delta_n\to\infty,$ as $n\to \infty$, the following convergences hold 

\begin{align}\label{eq:conC}
C_{p,n}(\theta_0,\theta_0)\overset{P_0}{\underset{n\to\infty}{\longrightarrow}} 2I(\theta_0)
\end{align}
and
\begin{align}\label{eq:conCbis}
\sup_{||\theta||\leq \varepsilon_n}||C_{p,n}(\theta_0+\theta,\theta_0)-C_{p,n}(\theta_0,\theta_0)||\overset{P_0}{\underset{n\to\infty}{\longrightarrow}}0,\quad  \varepsilon_n\to 0.
\end{align}

\end{proposition}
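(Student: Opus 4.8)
The plan is to treat the two displays \eqref{eq:conC} and \eqref{eq:conCbis} separately. For \eqref{eq:conC} the decisive simplification is that at $\theta=\theta_0$ the factor $\texttt{l}_{p,i}(\theta_0)-\texttt{l}_{p,i}(\theta_0)$ vanishes, so the second summands in \eqref{eq:firsttermC}, \eqref{eq:secondtermC} and the mixed analogue disappear and each entry of $C_{p,n}(\theta_0,\theta_0)$ reduces to a rescaled outer product of the quasi-scores, e.g. $\frac{1}{n\Delta_n}\partial^2_{\alpha_h\alpha_k}T_{p,n}(\theta_0,\theta_0)=\frac{2}{n\Delta_n}\sum_{i=1}^n\partial_{\alpha_h}\texttt{l}_{p,i}(\theta_0)\,\partial_{\alpha_k}\texttt{l}_{p,i}(\theta_0)$. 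Using the decompositions \eqref{eq:deralpha}, \eqref{eq:derbeta} together with $r_{k_0}(\Delta_n,X_{i-1},\theta)=X_{i-1}+\Delta_n b_{i-1}(\alpha)+R_{i-1}(\Delta_n^2)$ from \eqref{eq:proof1}, I would isolate the leading terms. For the drift part the dominant contribution is $\eta_{i,1}^h(\theta_0)=-\frac{\partial_{\alpha_h}b_{i-1}(\alpha_0)}{c_{i-1}(\beta_0)}(X_i-r_{k_0}(\Delta_n,X_{i-1},\theta_0))(1+R_{i-1}(\Delta_n))$, of order $\sqrt{\Delta_n}$, while $\eta_{i,2}^h$ is of order $\Delta_n$; for the diffusion part the leading term is $\xi_{i,2}^k(\theta_0)$, which after substituting $(X_i-r_{k_0})^2\approx\Delta_n c_{i-1}(\beta_0)$ behaves like $\frac{\partial_{\beta_k}c_{i-1}(\beta_0)}{2c_{i-1}(\beta_0)}\bigl(1-(X_i-r_{k_0})^2/(\Delta_nc_{i-1}(\beta_0))\bigr)$, an $O_{P_0}(1)$ quantity with vanishing conditional mean, whereas $\xi_{i,1}^k$ and $\xi_{i,3}^k$ are of orders $\Delta_n$ and $\Delta_n^{3/2}$ respectively.

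The limits then follow from the moment lemmas. For the upper-left block, Lemma \ref{lemmaer} with $(j,k)=(1,2)$ gives $\frac{2}{n\Delta_n}\sum_i\eta_{i,1}^h\eta_{i,1}^k\to 2\int\left(\frac{\partial_{\alpha_h}b\,\partial_{\alpha_k}b}{c}\right)(\theta_0,x)\pi_0(\de x)=2I_b^{h,k}(\theta_0)$, and the cross-products involving $\eta_{i,2}$ carry extra powers of $\Delta_n$ and vanish. For the lower-right block I would expand $\xi_{i,2}^h\xi_{i,2}^k$ into terms in $(X_i-r_{k_0})^4/\Delta_n^2$, $(X_i-r_{k_0})^2/\Delta_n$ and a constant, and apply Lemma \ref{lemmaer} with $(j,k)=(2,4)$ and $(1,2)$ together with Lemma \ref{lemmak}; the three contributions combine with weights $\tfrac34,-\tfrac12,\tfrac14$ to give $\frac1n\sum_i\xi_{i,2}^h\xi_{i,2}^k\to\frac12\int\left(\frac{\partial_{\beta_h}c\,\partial_{\beta_k}c}{c^2}\right)(\beta_0,x)\pi_0(\de x)$, so the rescaled block converges to $2I_\sigma^{h,k}(\theta_0)$. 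For the off-diagonal block, scaled by $\frac{1}{n\sqrt{\Delta_n}}$, the product $\eta_{i,1}^h\xi_{i,2}^k$ produces only odd powers $(X_i-r_{k_0})$ and $(X_i-r_{k_0})^3$; their conditional means are $R_{i-1}(\Delta_n^{k_0+1})$ and $R_{i-1}(\Delta_n^2)$ by \eqref{eq:appmean} and \eqref{eq:proof3}, and the rescaled conditional second moments vanish, so Lemma \ref{lemmaGJ} yields convergence to $0$, matching the block-diagonal form of $I(\theta_0)$ and hence establishing \eqref{eq:conC}.

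For the local equicontinuity \eqref{eq:conCbis} the plan is a mean value argument. By assumption $A_5[2k_0]$ and the Remark on $\de_j,\ee_j$, each $\texttt{l}_{p,i}$ is three times continuously differentiable in $\theta$ with $x$-polynomial-growth derivatives, so $\theta\mapsto C_{p,n}(\theta_0+\theta,\theta_0)$ is continuously differentiable and
$$\sup_{\|\theta\|\leq\varepsilon_n}\|C_{p,n}(\theta_0+\theta,\theta_0)-C_{p,n}(\theta_0,\theta_0)\|\leq\varepsilon_n\sup_{\|\theta\|\leq\varepsilon_n}\|\partial_\theta C_{p,n}(\theta_0+\theta,\theta_0)\|.$$
It then suffices to prove that the suitably rescaled third-$\theta$-derivative sums of $T_{p,n}$ are tight, i.e. $O_{P_0}(1)$ uniformly over $\|\theta\|\leq\varepsilon_n$; since $\varepsilon_n\to0$ the bound forces the supremum to $0$. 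Tightness I would obtain exactly as in Lemma \ref{lemmaer}: differentiating the expressions \eqref{eq:deralphaalpha}--\eqref{eq:dersecalphabeta} once more preserves the $\Delta_n$-scaling, because the dangerous negative powers enter only through $(X_i-r_{k_0})^2/\Delta_n=O_{P_0}(1)$ and $(X_i-r_{k_0})/\sqrt{\Delta_n}=O_{P_0}(1)$, and a uniform bound $\sup_n E_0[\sup_{\theta}\|\partial_\theta^{\,2} C_{p,n}\|]<\infty$ gives tightness in $C(\Theta)$ through the criterion of Theorem 16.5 in \cite{kall} already invoked there.

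The main obstacle is not conceptual but the simultaneous bookkeeping of the three normalizations $\tfrac1{n\Delta_n},\tfrac1{n\sqrt{\Delta_n}},\tfrac1n$ of the blocks: one must check that after each $\theta$-differentiation the powers of $\Delta_n^{-1}$ are always compensated by the order in $\Delta_n$ of the relevant power of $(X_i-r_{k_0})$ dictated by Lemma \ref{eq:lemma1}, so that no block blows up, and that every surviving term is matched to the correct moment case of Lemma \ref{lemmaer}. This careful tracking of $\Delta_n$-orders, together with discarding the many cross-terms that are of strictly lower order, is where the real work lies.
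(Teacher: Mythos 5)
Your treatment of \eqref{eq:conC} is correct, and it is actually shorter than the paper's: you exploit that at $\theta=\theta_0$ the factor $\texttt{l}_{p,i}(\theta_0)-\texttt{l}_{p,i}(\theta_0)$ kills the second summands in \eqref{eq:firsttermC}--\eqref{eq:secondtermC}, so only the score outer products survive, and your identification of the leading terms (Lemma \ref{lemmaer} for the $\alpha$-block, the weights $\tfrac34,-\tfrac12,\tfrac14$ recombining to $\tfrac12$ in the $\beta$-block, and Lemma \ref{lemmaGJ} for the off-diagonal block) reproduces $2I(\theta_0)$. The paper does not take this shortcut: it proves the stronger statement that $C_{p,n}(\theta,\theta_0)$ converges \emph{uniformly in $\theta$} to an explicit continuous matrix $2K(\theta,\theta_0)$ with $K(\theta_0,\theta_0)=I(\theta_0)$, of which \eqref{eq:conC} is just the evaluation at $\theta_0$.

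The reason the paper insists on the uniform limit is precisely \eqref{eq:conCbis}, and this is where your argument has a genuine gap. The paper obtains \eqref{eq:conCbis} from the triangle inequality
\begin{align*}
\sup_{\|\theta\|\le\varepsilon_n}\|C_{p,n}(\theta_0+\theta,\theta_0)-C_{p,n}(\theta_0,\theta_0)\|
&\le\sup_{\|\theta\|\le\varepsilon_n}\|C_{p,n}(\theta_0+\theta,\theta_0)-2K(\theta_0+\theta,\theta_0)\|\\
&\quad+\sup_{\|\theta\|\le\varepsilon_n}\|2K(\theta_0+\theta,\theta_0)-2I(\theta_0)\|+\|2I(\theta_0)-C_{p,n}(\theta_0,\theta_0)\|,
\end{align*}
using uniform convergence, continuity of $K$ in $\theta$, and \eqref{eq:conC}; this needs only the three $\theta$-derivatives of $b,\sigma$ (hence of $\texttt{l}_{p,i}$) that $A_5$ supplies. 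Your mean-value route instead needs $\sup_{\theta}\|\partial_\theta C_{p,n}\|=O_{P_0}(1)$, i.e.\ uniform stochastic boundedness of sums involving \emph{third} $\theta$-derivatives of $\texttt{l}_{p,i}$. Two problems arise. First, the tightness criterion you invoke (Theorem 16.5 of \cite{kall}, via $\sup_nE_0[\sup_\theta\|\partial_\theta^2C_{p,n}\|]<\infty$) would require \emph{fourth} $\theta$-derivatives of $\texttt{l}_{p,i}$, which assumption $A_5$ (three $\theta$-derivatives of $b$, $\sigma$, and of $\de_j,\ee_j$) does not provide; the paper's route is calibrated exactly so that the tightness arguments stop at the available order of smoothness. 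Second, and more fundamentally, your claim that the $\Delta_n$-scaling is preserved because the negative powers enter only through $(X_i-r_{k_0})^2/\Delta_n=O_{P_0}(1)$ and $(X_i-r_{k_0})/\sqrt{\Delta_n}=O_{P_0}(1)$ fails for the terms that are \emph{odd} in the increment and multiplied by the non-vanishing factor $\texttt{l}_{p,i}(\theta)-\texttt{l}_{p,i}(\theta_0)$. Already $C_{p,n}$ itself (hence also its $\theta$-derivative) contains
\begin{equation*}
\frac{1}{n\Delta_n}\sum_{i=1}^n\varphi_{i,4}(\theta,\theta_0)\,\delta_{i,2}^{h,k}(\theta),
\end{equation*}
where $\varphi_{i,4}=O_{P_0}(1)$ is not small and $\delta_{i,2}^{h,k}$ is linear in $X_i-r_{k_0}(\Delta_n,X_{i-1},\theta_0)$: per-summand absolute-value counting gives order $\sqrt{\Delta_n}$, hence a bound of order $\Delta_n^{-1/2}\to\infty$ after the $\tfrac{1}{n\Delta_n}$ normalization. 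Such terms are $o_{P_0}(1)$ only because $E_0^{i-1}[X_i-r_{k_0}(\Delta_n,X_{i-1},\theta_0)]$ is of higher order in $\Delta_n$ and a martingale (Lemma \ref{lemmaGJ}) argument applies; this cancellation is destroyed the moment you put absolute values inside $E_0[\sup_\theta(\cdot)]$, so the bound $\sup_nE_0[\sup_\theta\|\cdot\|]<\infty$ you want cannot be obtained term by term. Repairing this forces a compensator-plus-martingale decomposition of every entry, controlled uniformly in $\theta$ by the moment lemmas --- which is, in substance, the uniform-convergence analysis of $C_{p,n}(\theta,\theta_0)$ that the paper carries out, after which the explicit limit $K$ and the triangle inequality give \eqref{eq:conCbis} with no mean-value step at all.
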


\begin{proof}[Proof of Proposition \ref{Prop}]
We study the uniform convergence in probability of $C_{p,n}(\theta,\theta_0).$ Thus we prove that uniformly in $\theta$
\begin{equation}\label{eq:uniflimit}
C_{p,n}(\theta,\theta_0)\overset{P_0}{\underset{n\to\infty}{\longrightarrow}} 2K(\theta,\theta_0):=
2\left(
\begin{matrix}
K_{1}(\theta,\theta_0)+K_{2}(\theta,\theta_0)& 0\\
0& K_{3}(\theta,\theta_0)+K_{4}(\theta,\theta_0)
\end{matrix}
\right)
\end{equation}
where
\begin{align*}
K_1(\theta,\theta_0)&:=\int\frac{\partial_{\alpha_h}b(\alpha,x)\partial_{\alpha_k} b(\alpha,x)}{c^2(\beta,x)}c(\beta_0,x)\pi_0(\de x),
\\
K_2(\theta,\theta_0)&:=\frac14\int \partial_{\alpha_h\alpha_k}^2\de_1(x,\theta)\left[\frac{c(\beta_0,x)}{c(\beta,x)}-1\right]\left[3\frac{c(\beta_0,x)}{c(\beta,x)}+\log\left(\frac{c(\beta,x)}{c(\beta_0,x)}\right)-1\right]\pi_0(\de x)\notag\\
&\quad+ \frac12\int\left[\frac{\partial_{\alpha_h\alpha_k}^2b(\alpha,x)(b(\alpha,x)-b(\alpha_0,x))+\partial_{\alpha_h} b(\alpha,x)\partial_{\alpha_k} b(\alpha,x)}{c(\beta,x)}\right]\\
&\quad\times\left[\frac{c(\beta_0,x)}{c(\beta,x)}-1+\log\left(\frac{c(\beta,x)}{c(\beta_0,x)}\right)\right]\pi_0(\de x)\notag\\
&\quad+\int \frac{-\partial_{\alpha_h\alpha_k}^2 b(\alpha,x)}{c(\beta,x)}\\
&\quad\times\left[\frac12\left(\frac{1}{c(\beta_0,x)}-\frac{1}{c(\beta,x)}\right)R(\theta,1,x)+\frac{c(\beta_0,x)}{c^2(\beta,x)}(b(\alpha,x)-b(\alpha_0,x))\right]\pi_0(\de x)\notag\\
K_3(\theta,\theta_0)&:=\frac12\int \left\{ \frac{c(\beta_0,x)\partial_{\beta_h} c(\beta,x)\partial_{\beta_k} c(\beta,x)}{c^3(\beta,x)}\left[\frac32 \frac{c(\beta_0,x)}{c(\beta,x)}-1\right]+\frac12 	\frac{\partial_{\beta_h} c(\beta,x)\partial_{\beta_k} c(\beta,x)}{c^2(\beta,x)}\right\}\pi_0(\de x),
\\
K_4(\theta,\theta_0)&:=\frac14\int c(\beta_0,x)\partial_{\beta_h\beta_k}^2\log c(\beta,x)\left[\frac{1}{c(\beta,x)}-\frac{1}{c(\beta_0,x)}\right]\pi_0(\de x)\\
&+\frac14 \int \log\left(\frac{c(\beta,x)}{c(\beta_0,x)}\right)\frac{c(\beta_0,x)}{c(\beta,x)}\partial_{\beta_h\beta_k}^2 c^{-1}(\beta,x)\pi_0(\de x)\\
&+\frac14 \int \log\left(\frac{c(\beta,x)}{c(\beta_0,x)}\right)\partial_{\beta_h\beta_k}^2 \log c(\beta,x)\pi_0(\de x).
\end{align*}

Let us start with the analysis of the quantity $\frac{1}{n\Delta_n}\partial_{\alpha_h\alpha_k}^2 T_{p,n}(\theta,\theta_0)$ given by \eqref{eq:firsttermC} which can be split in two terms.
From \eqref{eq:deralpha} folllows that
 $$\frac{1}{n\Delta_n}\sum_{i=1}^n\partial_{\alpha_h}\texttt{l}_{p,i}(\theta)\partial_{\alpha_k}\texttt{l}_{p,i}(\theta)=\frac{1}{n\Delta_n}\sum_{i=1}^n(\eta_{i,1}^h(\theta)+\eta_{i,2}^h(\theta))(\eta_{i,1}^k(\theta)+\eta_{i,2}^k(\theta))$$ for each $\theta\in\Theta.$ Since $\partial_{\alpha_h} r_{k_0}(\Delta_n,X_{i-1},\theta)=\Delta_n\partial_{\alpha_h} b_{i-1}(\alpha)+R(\theta,\Delta_n^2, X_{i-1}),$ by taking into account Lemma \ref{lemmaer}, we get
\begin{align}\label{eta1+eta2}
 \frac{1}{n\Delta_n}\sum_{i=1}^n\partial_{\alpha_h}\texttt{l}_{p,i}(\theta)\partial_{\alpha_k}\texttt{l}_{p,i}(\theta)&=\frac{1}{n\Delta_n}\sum_{i=1}^n\eta_{i,1}^h(\theta)\eta_{i,1}^k(\theta)+\oo_{P_0}(1)\\
 &=\frac{1}{n\Delta_n}\sum_{i=1}^n\frac{\partial_{\alpha_h} b_{i-1}(\alpha)\partial_{\alpha_k} b_{i-1}(\alpha)}{c_{i-1}^2(\beta)}(X_i-r_{k_0}(\Delta_n,X_{i-1},\theta))^2+\oo_{P_0}(1)\notag\\
 &\overset{P_0}{\underset{n\to\infty}{\longrightarrow}}K_1(\theta,\theta_0)\notag
\end{align}
uniformly in $\theta.$ Now, by resorting \eqref{eq:diff} and \eqref{eq:deralphaalpha}, we rewrite the second term appearing in \eqref{eq:firsttermC} as follows
\begin{align*}
\frac{1}{n\Delta_n}\sum_{i=1}^n[\texttt{l}_{p,i}(\theta)-\texttt{l}_{p,i}(\theta_0)]\partial_{\alpha_h\alpha_k}^2\texttt{l}_{p,i}(\theta)=\frac{1}{n\Delta_n}\sum_{i=1}^n\left[\sum_{l=1}^{4}\sum_{j=1}^{4}  \varphi_{i,l}(\theta,\theta_0)\delta_{i,j}^{h,k}(\theta)\right].
\end{align*}
By applying Lemma \ref{lemma0} and Lemma \ref{lemmaer}, the following convergence results hold
\begin{align*}
&\frac{1}{n\Delta_n}\sum_{i=1}^n\varphi_{i,1}(\theta,\theta_0)\delta_{i,1}^{h,k}(\theta)\overset{P_0}{\underset{n\to\infty}{\longrightarrow}} \frac34\int \partial_{\alpha_h\alpha_k}^2\de_1(\theta,x)\frac{c^2(\beta_0,x)}{c(\beta,x)}\left[\frac{1}{c(\beta,x)}-\frac{1}{c(\beta_0,x)}\right]\pi_0(\de x),
\\
&\frac{1}{n\Delta_n}\sum_{i=1}^n\varphi_{i,1}(\theta,\theta_0)\delta_{i,2}^{h,k}(\theta)\overset{P_0}{\underset{n\to\infty}{\longrightarrow}} \frac12\int \frac{-\partial_{\alpha_h\alpha_k}^2b(\alpha,x)}{c(\beta,x)}\left[\frac{1}{c(\beta,x)}-\frac{1}{c(\beta_0,x)}\right]R(\theta,1,x)\pi_0(\de x),
\\
&\frac{1}{n\Delta_n}\sum_{i=1}^n\varphi_{i,1}(\theta,\theta_0)\delta_{i,3}^{h,k}(\theta)\overset{P_0}{\underset{n\to\infty}{\longrightarrow}} \frac14\int \partial_{\alpha_h\alpha_k}^2\ee_1(\theta,x)\left[\frac{c(\beta_0,x)}{c(\beta,x)}-1\right]\pi_0(\de x),
\\
&\frac{1}{n\Delta_n}\sum_{i=1}^n\varphi_{i,1}(\theta,\theta_0)\delta_{i,4}^{h,k}(\theta)\\
&\overset{P_0}{\underset{n\to\infty}{\longrightarrow}} \frac12\int\left[\frac{c(\beta_0,x)}{c(\beta,x)}-1\right]\left[\frac{\partial_{\alpha_h\alpha_k}^2b(\alpha,x)(b(\alpha,x)-b(\alpha_0,x))+\partial_{\alpha_h} b(\alpha,x)\partial_{\alpha_k} b(\alpha,x)}{c(\beta,x)}\right]\pi_0(\de x),
\\
&\frac{1}{n\Delta_n}\sum_{i=1}^n\varphi_{i,2}(\theta,\theta_0)\delta_{i,2}^{h,k}(\theta)\overset{P_0}{\underset{n\to\infty}{\longrightarrow}} \int \frac{c(\beta_0,x)}{c^2(\beta,x)}(-\partial_{\alpha_h\alpha_k}^2 b(\alpha,x))(b(\alpha,x)-b(\alpha_0,x))\pi_0(\de x),
\\
&\frac{1}{n\Delta_n}\sum_{i=1}^n\varphi_{i,4}(\theta,\theta_0)\delta_{i,1}^{h,k}(\theta)\overset{P_0}{\underset{n\to\infty}{\longrightarrow}}\frac14 \int \log\left(\frac{c(\beta,x)}{c(\beta_0,x)}\right)\frac{c(\beta_0,x)}{c(\beta,x)}\partial_{\alpha_h\alpha_k}^2 \de_1(\theta,x)\pi_0(\de x),\\
&\frac{1}{n\Delta_n}\sum_{i=1}^n\varphi_{i,4}(\theta,\theta_0)\delta_{i,3}^{h,k}(\theta)\overset{P_0}{\underset{n\to\infty}{\longrightarrow}}\frac14\int  \partial_{\alpha_h\alpha_k}^2\ee_1(\theta,x)\log\left(\frac{c(\beta,x)}{c(\beta_0,x)}\right)\pi_0(\de x),
\\
&\frac{1}{n\Delta_n}\sum_{i=1}^n\varphi_{i,4}(\theta,\theta_0)\delta_{i,4}^{h,k}(\theta)\\&\overset{P_0}{\underset{n\to\infty}{\longrightarrow}}\frac12\int\log\left(\frac{c(\beta,x)}{c(\beta_0,x)}\right)\left\{\frac{\partial_{\alpha_h\alpha_k}^2 b(\alpha,x)(b(\alpha,x)-b(\alpha_0,x))+\partial_{\alpha_k} b(\alpha,x)\partial_{\alpha_h} b(\alpha,x)}{c(\beta,x)}\right\}\pi_0(\de x),\\
&\frac{1}{n\Delta_n}\sum_{i=1}^n\varphi_{i,2}(\theta,\theta_0)\delta_{i,j}^{h,k}(\theta)\overset{P_0}{\underset{n\to\infty}{\longrightarrow}} 0,\quad j=1,3,4,
\\
&\frac{1}{n\Delta_n}\sum_{i=1}^n\varphi_{i,3}(\theta,\theta_0)\delta_{i,j}^{h,k}(\theta)\overset{P_0}{\underset{n\to\infty}{\longrightarrow}} 0,\quad j=1,2,3,4,\\
&\frac{1}{n\Delta_n}\sum_{i=1}^n\varphi_{i,4}(\theta,\theta_0)\delta_{i,2}^{h,k}(\theta)\overset{P_0}{\underset{n\to\infty}{\longrightarrow}} 0,
\end{align*}
uniformly in $\theta.$
Finally, since $\de_1(\theta,x)=-\ee_1(\theta,x),$ we get
\begin{align}\label{eq:firsttermCconv}
\frac{1}{n\Delta_n}\sum_{i=1}^n[\texttt{l}_{p,i}(\theta)-\texttt{l}_{p,i}(\theta_0)]\partial_{\alpha_h\alpha_k}^2\texttt{l}_{p,i}(\theta)
\overset{P_0}{\underset{n\to\infty}{\longrightarrow}} K_2(\theta,\theta_0).
\end{align}
uniformly in $\theta.$ Hence, by \eqref{eta1+eta2} and \eqref{eq:firsttermCconv}, we immediately derive

\begin{align}\label{eq:conK1+K2}
\frac{1}{n\Delta_n}\partial_{\alpha_h\alpha_k}^2 T_{p,n}(\theta,\theta_0)
\overset{P_0}{\underset{n\to\infty}{\longrightarrow}} 2(K_1(\theta,\theta_0)+K_2(\theta,\theta_0))
\end{align}
uniformly in $\theta.$

Now, we consider the elements of the matrix $C_{n,p}(\theta,\theta_0)$ given by \eqref{eq:secondtermC}.
First, we study the convergence probability of  $$\frac{1}{n}\sum_{i=1}^n \partial_{\beta_h}  \texttt{l}_{p,i}(\theta) \partial_{\beta_k}  \texttt{l}_{p,i}(\theta)= \frac{1}{n}\sum_{i=1}^n (\xi_{i,1}^h(\theta)+\xi_{i,2}^h(\theta)+\xi_{i,3}^h(\theta))(\xi_{i,1}^k(\theta)+\xi_{i,2}^k(\theta)+\xi_{i,3}^k(\theta)).$$
Since $\partial_{\beta_h} r_{k_0}(\Delta_n,X_{i-1},\theta)=R(\theta,\Delta_n^2, X_{i-1}),$ from Lemma \ref{lemmaer} and Lemma \ref{lemma0} we derive
\begin{align}\label{eq:conK3}
\frac{1}{n}\sum_{i=1}^n \partial_{\beta_h}  \texttt{l}_{p,i}(\theta) \partial_{\beta_k}  \texttt{l}_{p,i}(\theta)&=\frac{1}{n}\sum_{i=1}^n \xi_{i,2}^h(\theta)\xi_{i,2}^k(\theta)+\oo_{P_0}(1)\\
&=\frac{1}{n}\sum_{i=1}^n\frac{\partial_{\beta_h} c_{i-1}(\beta)\partial_{\beta_k} c_{i-1}(\beta)}{4\Delta_n^2c_{i-1}^4(\beta)}(X_i-r_{k_0}(\Delta_n,X_{i-1},\theta))^4\notag\\
&\quad+\frac{1}{n}\sum_{i=1}^n\frac{\partial_{\beta_h} c_{i-1}(\beta)\partial_{\beta_k} c_{i-1}(\beta)}{2\Delta_nc_{i-1}^3(\beta)}(X_i-r_{k_0}(\Delta_n,X_{i-1},\theta))^2\notag\\
&\quad+\frac{1}{n}\sum_{i=1}^n\frac{\partial_{\beta_h} c_{i-1}(\beta)\partial_{\beta_k} c_{i-1}(\beta)}{4c_{i-1}^2(\beta)}+\oo_{P_0}(1)\notag\\
&\overset{P_0}{\underset{n\to\infty}{\longrightarrow}}K_3(\theta,\theta_0)\notag
\end{align}
uniformly in $\theta.$ Now, by resorting \eqref{eq:diff} and \eqref{eq:derbeta}, we rewrite the second term appearing in \eqref{eq:secondtermC} as follows

\begin{align*}
\frac{1}{n}\sum_{i=1}^n[\texttt{l}_{p,i}(\theta)-\texttt{l}_{p,i}(\theta_0)]\partial_{\beta_h\beta_k}^2\texttt{l}_{p,i}(\theta)=\frac{1}{n}\sum_{i=1}^n\left[\sum_{k=1}^{4}\sum_{j=1}^{3} \varphi_{i,k}(\theta,\theta_0)\nu_{i,j}^{h,k}(\theta)\right].
\end{align*}

By taking into account again Lemma \ref{lemma0} and Lemma \ref{lemmaer}, the following results yield
\begin{align*}
&\frac{1}{n}\sum_{i=1}^n \varphi_{i,1}(\theta,\theta_0)\nu_{i,3}(\theta)
\overset{P_0}{\underset{n\to\infty}{\longrightarrow}} \frac14\int c(\beta_0,x)\partial_{\beta_h\beta_k}^2\log c(\beta,x)\left[\frac{1}{c(\beta,x)}-\frac{1}{c(\beta_0,x)}\right]\pi_0(\de x)
\\
&\frac{1}{n}\sum_{i=1}^n \varphi_{i,4}(\theta,\theta_0)\nu_{i,1}(\theta)
\overset{P_0}{\underset{n\to\infty}{\longrightarrow}}\frac14 \int \log\left(\frac{c(\beta,x)}{c(\beta_0,x)}\right)\frac{c(\beta_0,x)}{c(\beta,x)}\partial_{\beta_h\beta_k}^2 c^{-1}(\beta,x)\pi_0(\de x)
\\
&\frac{1}{n}\sum_{i=1}^n \varphi_{i,4}(\theta,\theta_0)\nu_{i,3}(\theta)
\overset{P_0}{\underset{n\to\infty}{\longrightarrow}}\frac14 \int \log\left(\frac{c(\beta,x)}{c(\beta_0,x)}\right)\partial_{\beta_h\beta_k}^2 \log c(\beta,x)\pi_0(\de x)\\
&\frac{1}{n}\sum_{i=1}^n \varphi_{i,1}(\theta,\theta_0)\nu_{i,j}(\theta)
\overset{P_0}{\underset{n\to\infty}{\longrightarrow}}0,\quad j=1,2,\\
&\frac{1}{n}\sum_{i=1}^n \varphi_{i,k}(\theta,\theta_0)\nu_{i,j}(\theta)
\overset{P_0}{\underset{n\to\infty}{\longrightarrow}}0,\quad k,j=1,2,3,\\
&\frac{1}{n}\sum_{i=1}^n \varphi_{i,4}(\theta,\theta_0)\nu_{i,2}(\theta)
\overset{P_0}{\underset{n\to\infty}{\longrightarrow}}0,
\end{align*}
uniformly in $\theta.$
Finally
\begin{align}\label{eq:conK4}
\frac{1}{n}\sum_{i=1}^n[\texttt{l}_{p,i}(\theta)-\texttt{l}_{p,i}(\theta_0)]\partial_{\beta_h\beta_k}^2\texttt{l}_{p,i}(\theta)\overset{P_0}{\underset{n\to\infty}{\longrightarrow}}K_4(\theta,\theta_0)\end{align}
uniformly in $\theta.$ Therefore, by \eqref{eq:conK3} and  \eqref{eq:conK4}, we get 
\begin{align}\label{eq:conK3+K4}
\frac1n\partial_{\beta_h\beta_k}^2 T_{p,n}(\theta,\theta_0)
\overset{P_0}{\underset{n\to\infty}{\longrightarrow}} 2(K_3(\theta,\theta_0)+K_4(\theta,\theta_0))
\end{align}
uniformly in $\theta.$

Recalling the expressions \eqref{eq:deralpha}, \eqref{eq:derbeta}, \eqref{eq:dersecalphabeta} and \eqref{eq:diff}, by means of similar arguments adopted above, it is not hard to prove that
\begin{equation*}
 \frac{1}{n\sqrt{\Delta_n}}\sum_{i=1}^n \partial_{\alpha_h} \texttt{l}_{p,i}(\theta)\partial_{\beta_k}  \texttt{l}_{p,i}(\theta) \overset{P_0}{\underset{n\to\infty}{\longrightarrow}}   0
 \end{equation*}
and
 \begin{equation*}
 \frac{1}{n\sqrt{\Delta_n}}\sum_{i=1}^n [\texttt{l}_{p,i}(\theta)-\texttt{l}_{p,i}(\theta_0)]\partial_{\alpha_h\beta_k}^2  \texttt{l}_{p,i}(\theta) \overset{P_0}{\underset{n\to\infty}{\longrightarrow}}   0
 \end{equation*}
 uniformly in $\theta.$ This implies that
 \begin{align}\label{eq:con0}
\frac{1}{n\sqrt{\Delta_n}}\partial_{\alpha_h\beta_k}^2 T_{p,n}(\theta,\theta_0)
\overset{P_0}{\underset{n\to\infty}{\longrightarrow}} 0
\end{align}
 uniformly in $\theta.$  
 
 In conclusion the results \eqref{eq:conK1+K2}, \eqref{eq:conK3+K4} and \eqref{eq:con0} lead to the convergence \eqref{eq:uniflimit}. Moreover, immediately  \eqref{eq:uniflimit} implies \eqref{eq:conC} since $K(\theta_0,\theta_0)=I(\theta_0)$. From the inequality
 
\begin{align*}
&\sup_{||\theta||\leq \varepsilon_n}||C_{p,n}(\theta_0+\theta,\theta_0)-C_{p,n}(\theta_0,\theta_0)||\\
&\leq \sup_{||\theta||\leq \varepsilon_n}||C_{p,n}(\theta_0+\theta,\theta_0)-2K(\theta_0+\theta,\theta_0)||+\sup_{||\theta||\leq \varepsilon_n}||2K(\theta_0+\theta,\theta_0)-2I(\theta_0)||\\
&\quad+||2I(\theta_0)-C_{p,n}(\theta_0,\theta_0)||
\end{align*}
follows \eqref{eq:conCbis}. Indeed,  \eqref{eq:conC} leads to $||2I(\theta_0)-C_{p,n}(\theta_0,\theta_0)||{\underset{n\to\infty}{\longrightarrow}} 0,\varepsilon_n\to0,$ while the term $\sup_{||\theta||\leq \varepsilon_n}||C_{p,n}(\theta_0+\theta,\theta_0)-2K(\theta_0+\theta,\theta_0)||\overset{P_0}{\underset{n\to\infty}{\longrightarrow}} 0,\varepsilon_n\to0,$ by the uniformity of the convergence (i.e. by the result \eqref{eq:uniflimit}). Furthermore, $\sup_{||\theta||\leq \varepsilon_n}||K(\theta_0+\theta,\theta_0)-I(\theta_0)||\overset{P_0}{\underset{n\to\infty}{\longrightarrow}} 0,\varepsilon_n\to0,$ because the assumptions $A_3$ and $A_5,$ imply that $K(\theta,\theta_0)$ is a continuous function with respect to $\theta.$ \end{proof} 

Now, we are able to prove Theorem \ref{main}.

\begin{proof}[Proof of Theorem \ref{main}.]

We adopt classical arguments. By Taylor's formula, we have that
\begin{align}\label{eq:taylor}
T_{p,n}(\hat\theta_{p,n},\theta_0)
&=T_{p,n}(\theta_0,\theta_0)+n\partial_\theta T_{p,n}(\theta_0,\theta_0)(\hat\theta_{p,n}-\theta_0)\\
&\quad+\frac12(\varphi(n)^{-1/2}(\hat\theta_n-\theta_0))'\Lambda_{p,n}(\hat\theta_{p,n},\theta_0))\varphi(n)^{-1/2}(\hat\theta_{p,n}-\theta_0)\notag\\
&=\frac12(\varphi(n)^{-1/2}(\hat\theta_n-\theta_0))'\Lambda_{p,n}(\hat\theta_{p,n},\theta_0)\varphi(n)^{-1/2}(\hat\theta_n-\theta_0)\notag
\end{align}
where in the last step we denoted by
\begin{align*}
\Lambda_{p,n}(\hat\theta_{p,n},\theta_0)&:=\varphi(n)^{1/2}\int_0^1(1-u)\partial_\theta^2 T_{p,n}(\theta_0+u(\hat\theta_{p,n}-\theta_0),\theta_0)\de u\varphi(n)^{1/2}\\
&=\int_0^1(1-u)[C_{p,n}(\theta_0+u(\hat\theta_{p,n}-\theta_0),\theta_0)-C_{p,n}(\theta_0,\theta_0)]\de u+\frac12C_{p,n}(\theta_0,\theta_0).
\end{align*}
Proposition \ref{Prop} implies  \begin{equation}\label{eq:conmainteo}
\Lambda_{p,n}(\hat\theta_{p,n},\theta_0)\overset{P_0}{\underset{n\to\infty}{\longrightarrow}}2I(\theta_0).
\end{equation} 
By taking into account \eqref{eq:taylor}, \eqref{eq:conest} and \eqref{eq:conmainteo}, Slutsky's theorem allows to conclude the proof.
\end{proof}

\begin{proof}[Proof of Theorem \ref{main3teo}]
Under $H_{1,n}$ we have that (see Lemma 2 in \cite{kituch})
$$\varphi(n)^{-1/2}(\hat \theta_{p,n}-(\theta_0+\varphi(n)h))\overset{d}{\underset{n\to\infty}{\longrightarrow}} N(0,I(\theta_0)^{-1}).$$
Therefore, under the hypothesis $H_{1,n}$
$$\varphi(n)^{-1/2}(\hat \theta_{p,n}-\theta_0)=\varphi(n)^{-1/2}(\hat \theta_{p,n}-\theta)+h\overset{d}{\underset{n\to\infty}{\longrightarrow}} N(h,I(\theta_0)^{-1})$$
and
$$C_{p,n}(\hat\theta_{p,n},\theta_0)\overset{P_\theta}{\underset{n\to\infty}{\longrightarrow}}2I(\theta_0) \quad (\text{under}\, H_{1,n}).$$
Hence, from \eqref{eq:taylor} we obtain the result \eqref{main3}.
\end{proof}


\section*{Acknowlegments}
We would like to thank both the referees for their comments which have greatly improved the first version of the manuscript. 

\section*{Conflict of interest}

On behalf of all authors, the corresponding author states that there is no conflict of interest.


\begin{thebibliography}{9}


\bibitem{aittest} A\"\i t-Sahalia, Y. (1996) Testing continuous-time models of the spot interest rate, {\it Review of Financial Studies}, {\bf 70}, 385–426.

\bibitem{ait} A\"\i t-Sahalia, Y. (2002). Maximum-likelihood estimation of discretely-sampled diffusions: A closed-form approximation approach, {\it Econometrica}, {\bf 70}, 223–262.

\bibitem{ait2} A\"\i t-Sahalia, Y. (2008) Closed-form likelihood expansions for multivariate diffusions, {\it Annals of Statistics}
\textbf{36}, 906-937.


\bibitem{BS68} Balakrishnan, V.,   Sanghvi, L. D. (1968) Distance between populations on the basis of attribute data, {\it  Biometrica},
{\bf  24}, 859-865.

\bibitem{bibbsor} Bibby, B.M., S\o rensen, M. (1995) Martingale estimating functions for discretely observed diffusion processes, {\it Bernoulli}, {\bf 1}, 17-39.






\bibitem{DGIcp} De Gregorio, A., Iacus, S.M. (2008) Least squares volatility change point estimation for partially observed diffusion processes, {\it  Comm. Statist. Theory Methods}, {\bf 37}, 2342-2357.

\bibitem{DGIclust} De Gregorio, A., Iacus, S.M. (2010) Clustering of discretely observed diffusion processes, {\it Computational Statistics and Data Analysis}, {\bf 54}, 598-606.

\bibitem{DGIlasso} De Gregorio, A., Iacus, S.M. (2012) Adaptive LASSO-type estimation for multivariate diffusion processes, {\it Econometric Theory}, {\bf 28}, 838-860.


\bibitem{DGItest} De Gregorio, A., Iacus, S.M. (2013) On a family of test statistics for discretely observed diffusion processes, {\it Journal of
Multivariate Analysis}, {\bf 122}, 292-316.






\bibitem{fz} Florens-Zmirou, D. (1989) Approximate discrete-time schemes for statistics of diffusion processes,  \textit{Statistics}, {\bf 20}, 547-557.

\bibitem{gcj} Genon-Catalot, V., Jacod, J. (1993) On the estimation of the diffusion coefficient for multidimensional diffusion processes,  {\it Ann. Inst. Henri Poincar\'e}, {\bf 29}, 119--151.


\bibitem{gobet} Gobet, E. (2002) LAN property for ergodic diffusions with discrete observations, {\it Ann. I. H. Poincaré – PR}, {\bf 38}, 711-737.

\bibitem{iacus} Iacus, S.M. (2008) {\it Simulation and Inference for Stochastic Differential  Equations: with R examples}, Springer Series in Statistics, Springer NY.

\bibitem{iacus2} Iacus, S.M. (2011) {\it Option pricing and estimation of financial models with R}, John Wiley \& Sons, Ltd.

\bibitem{iacusyosuc} Iacus, S.M., Uchida, M., Yoshida, N. (2009) Parametric estimation for partially hidden diffusion processes sampled at discrete times, {\it Stochastic Processes and their Applications}, {\bf 119}, 1580-1600.

\bibitem{iacusyos} Iacus, S.M., Yoshida, N. (2012) Estimation for the change point of volatility in a stochastic differential equation, {\it Stochastic Processes and their Applications}, {\bf 122}, 1068-1092.

\bibitem{iacusyosyui} Iacus, S.M.,  Yoshida, N. (2017) {\it Simulation and Inference for Stochastic Processes with YUIMA}, Springer Series in Statistics, Springer NY.

\bibitem{jacod} Jacod, J. (2006) Parametric inference for discretely observed
non-ergodic diffusions, {\it Bernoulli}, {\bf 12}, 383-401.



\bibitem{kall} Kallenberg, O. (2001) {\it Foundations of Modern Probability}, Springer-Verlag, London.

\bibitem{kama} Kamatani, K., Uchida, M. (2015) Hybrid multi-step estimators for stochastic differential equations based on sampled data, {\it Statistical Inference for Stochastic Processes}, {\bf 18}, 177-204.


\bibitem{kituch} Kitagawa, H., Uchida, M. (2014) Adaptive test statistics for ergodic diffusion processes sampled at discrete times, {\it Journal of Statistical Planning and Inference}, {\bf 150}, 84-110.

\bibitem{kessad} Kessler, M., (1995) Estimation des parametres d'une diffusion par des contrastes corrig\'es, {\it C. R. Acad. Sci. Paris Ser. I Math.}, {\bf 320}, 359-362.
Math. 320 (1995) 359–362.

\bibitem{kess} Kessler, M. (1997)  Estimation of an ergodic diffusion from discrete observations, \textit{Scand. J. Stat.}, \textbf{24}, 211--229.



\bibitem{kesssor} Kessler, M., S$\o$rensen, M. (1999) Estimating equations based on eigenfunctions for a discretely observed diffusion process, {\it Bernoulli}, {\bf 5}, 299-314.


\bibitem{kut} Kutoyants, Yu. A. (2004) {\it Statistical Inference for Ergodic Diffusion Processes}, Springer-Verlag, London.



\bibitem{li} Li, C. (2013) Maximum-likelihood estimation for diffusion processes via closed-form density expansions, {\it Annals of Statistics}, {\bf 41}, 1350-1380.

\bibitem{mpv} Morales, D., Pardo, L., Vajda, I. (1997) Some New Statistics
for Testing Hypotheses in Parametric Models, {\it Journal of
Multivariate Analysis}, {\bf 67}, 137-168.





 \bibitem{pardo} Pardo, L. (2006) {\it Statistical Inference Based on Divergence Measures}, Chapman \& Hall/CRC, London.




\bibitem{phillyu} Phillips, P.C.B., Yu, J. (2009) A two-stage realized volatility approach to estimation of diffusion processes with discrete data, {\it Journal of Econometrics}, {\bf 150}. 139-150.

\bibitem{uchyos12} Uchida, M., Yoshida, N. (2012) Adaptive estimation of an ergodic diffusion process based on sampled data, {\it Stochastic Process. Appl.}, {\bf 17}, 181-219.

\bibitem{uchyos14} Uchida, M., Yoshida, N. (2014) Adaptive Bayes type estimators of ergodic diffusion processes from discrete observations, {\it Statistical Inference for Stochastic Processes}, {\bf 122}, 2885-2924.


\bibitem{van} Van der Vaart, A.W. (1998) {\it Asymptotic Statistics}, Cambridge University press, 

\bibitem{Yos92} Yoshida, N. (1992) Estimation for diffusion processes from discrete observation, {\it J. Multivariate Anal.}, {\bf 41}, 220-242.
\bibitem{Yos11} Yoshida, N.  (2011) Polynomial type large deviation inequality and its applications, {\it Ann. Inst. Stat. Mat.}, {\bf 63}, 431-479.

\end{thebibliography}
\end{document}